\documentclass[a4paper,12pt,oneside,reqno]{amsart}

\usepackage{enumitem}
\usepackage{hyperref}
\usepackage[headinclude,DIV13]{typearea}
\areaset{15.1cm}{25.0cm}
\usepackage{amsfonts,amssymb,amsmath,amsthm,bbm}
\usepackage{xcolor}
\usepackage[latin1] {inputenc}
\usepackage{graphicx, psfrag}
\usepackage{color}
\usepackage{bold-extra}

\usepackage{subscript}

\newtheorem{theorem}{Theorem}[section]
\newtheorem{lemma}[theorem]{Lemma}
\newtheorem{proposition}[theorem]{Proposition}
\newtheorem{corollary}[theorem]{Corollary}

\theoremstyle{definition}

\theoremstyle{remark}
\newtheorem*{remark}{Remark}

\def\paragraph#1{\noindent \textbf{#1}}

\numberwithin{equation}{section}

\def\Var{\mathop{\rm Var}\nolimits}
\def\Cov{\mathop{\bf Cov}\nolimits}

\def\d{\mathrm{d}}

\def\<{\langle}
\def\>{\rangle}

\def\a{\alpha}
\def\b{\beta}
\def\e{\epsilon}

\def\g{\gamma}
\def\k{\kappa}
\def\l{\lambda}

\def\s{\sigma}

\def\z{\zeta}
\def\o{\omega}

\def\O{\Omega}
\def\S{\Sigma}

\def \D {{\mathbb D}}

\def\R{{\mathbb R}}  
\def\N{{\mathbb N}}  
\def\P{{\mathbb P}}   
\def\Z{{\mathbb Z}}  
  
\def\C{{\mathbb C}}  
\def\E{{\mathbb E}}

\let\cal=\mathcal
\def\AA{{\cal A}}
\def\BB{{\cal B}}
\def\CC{{\cal C}}
\def\DD{{\cal D}}
\def\EE{{\cal E}}
\def\FF{{\cal F}}

\def\HH{{\cal H}}
\def\II{{\cal I}}
\def\JJ{{\cal J}}

\def\LL{{\cal L}}
\def\MM{{\cal M}}

\def\OO{{\cal O}}
\def\PP{{\cal P}}

\def\SS{{\cal S}}

 \def \k {{\kappa}}
 \def \b {{\beta}}
\def \e {{\epsilon}}
 \def \s {{\sigma}}
 
 \def \z {{\zeta}}

 \def \g {{\gamma}}
 \def \l {{\lambda}}
 \def \d {{\delta}}
 \def \a {{\alpha}}
 \def \o {{\omega}}
 \def \O {{\Omega}}

\newcommand{\rand}{{\rm rand}}

\newcommand{\Prob}{\mathbb{P}}

 \def \ba {\begin{array}}
 \def \ea {\end{array}}

 \newcommand{\be}{\begin{equation}}
 \newcommand{\ee}{\end{equation}}

\newcommand{\bea}{\begin{eqnarray}}
 \newcommand{\eea}{\end{eqnarray}}
\def\TH(#1){\label{#1}}\def\thv(#1){\ref{#1}}
\def\Eq(#1){\label{#1}}\def\eqv(#1){(\ref{#1})}

\def\var{\hbox{\rm Var}}
\def\sfrac#1#2{{\textstyle{#1\over #2}}}

 \def \1{\mathbbm{1}}

\def\wh{\widehat}

\def\Leb{{\rm{Leb}}}

\begin{document}
\title{Integral Means Spectrum for the Random Riemann Zeta~Function}
\author[B. Duplantier]{Bertrand Duplantier}
\address{Bertrand Duplantier\\ Universit\'e Paris-Saclay, CEA, CNRS, Institut de Physique Th\'eorique, 91191 Gif-sur-Yvette Cedex, France}
\email{bertrand.duplantier@ipht.fr}
\author[V. Gayrard]{V\'eronique Gayrard}
 \address{
V\'eronique Gayrard\\ Aix Marseille Univ, CNRS, I2M, Marseille, France
}
\email{veronique.gayrard@math.cnrs.fr}
\author[E. Saksman]{Eero Saksman}
\address{Eero Saksman\\University of Helsinki, Department of Mathematics and Statistics, PO Box 68 (Pietari Kalmin katu 5), 00014 University of Helsinki, Finland}
\email{eero.saksman@helsinki.fi}
\subjclass[2000]{60G15; 60G60, 60G70, 11M06}
\keywords{Integral means spectrum, Random Riemann zeta function, complex log-correlated random fields}

\date{\today}

\begin{abstract} 
We study the integral means spectrum associated with the analytic function whose derivative is the so-called  randomized Riemann zeta-function, introduced some time ago by Bagchi. The randomized $\z$-function,  $\z_{\mathrm{rand}}(\s+ih)$, is known to represent the asymptotic statistical behaviour  of the random vertical shifts of the actual $\z$-function in the critical strip, $1/2 <\s\leq 1, h\in \mathbb R$, and appears in a number of recent works on the asymptotic behavior of the moments and maxima of the $\z$-function on short intervals along the critical axis $\s=1/2$. 
Using probability and basic analytic number theory, we show that the complex integral means spectrum of the primitive of $\z_{\mathrm{rand}}$ is almost surely of the form conjectured 30 years ago by Kraetzer, for the so-called universal integral means spectrum of univalent functions in the disc. The Riemann $\z$-function and its random version have recently been rigorously related to the so-called Gaussian multiplicative chaos (GMC), initiated by Kahane 40 years ago.  In the case of the holomorphic multiplicative chaos on the unit disc -- an important stochastic object closely related to Liouville quantum gravity on the unit circle -- we prove that the integral means spectrum    of the primitive is almost surely also of the same Kraetzer form. However, we establish that neither the primitive of the random function  $\z_{\mathrm{rand}}$, nor that of the holomorphic GMC are injective. Building on earlier work by one of the authors and Webb on the convergence of Riemann $\z$-function on the critical line to a holomorphic GMC distribution, we finally provide an alternative derivation of the integral means spectrum for the random  Riemann $\z$-function.
\end{abstract}

\thanks{B.~Duplantier is grateful to  Michel Zinsmeister for his early collaboration on this subject, and we thank Christian Webb for valuable discussions. B.~Duplantier and V.~Gayrard would like to thank the Centre International de Rencontres Math\' ematiques (CIRM) at Luminy and the Institute for Applied Mathematics at the University of Bonn for their hospitality during the writing of this paper. B.~Duplantier and E. Saksman would like to thank the Hausdorff Institute for Mathematics for its hospitality during the 
Trimester Program ``Probabilistic methods in quantum field theory". Funding for all their stays was provided by the Deutsche Forschungsgemeinschaft (DFG, German Research Foundation) under Germany's Excellence Strategy -- EXC-2047/1 -- 390685813.  In addition, E. Saksman was  supported by the Finnish Academy Center of Excellence FiRST}
\maketitle


\section{Introduction}
    \TH(S1)
    
 \subsection{Integral means spectrum} \TH(S0.0)
Twenty years ago, Peter W. Jones published a nice essay in honor  of Lennart Carleson's 75th birthday, entitled ``{\it On Scaling Properties of Harmonic Measure}" \cite{10.1007/3-540-30434-7_7}, where he discussed in particular various conjectures  concerning the fine structure of harmonic measure in the plane. The first conjectures he discussed concerned conformal mappings from the unit disc, $h: \D\to \Omega$, where $\Omega$ is a bounded planar domain.  Let $\beta$ be a complex number, and consider the integral means of the growth of the modulus of the $\beta$th power of the derivative, $|h'(z)^\b|$. The \emph{integral means spectrum} associated with $h$ is then defined as \cite{MR1629379,Bin97}
\be \Eq(-1.0)
b_h(\b) := \underset{r\to 1^{-}}{\limsup} 
\frac{\log \int_{r\partial \D} |h'(z)^\b| |dz|}{\log\left(\frac{1}{1-r}\right)}.
\ee
When the limit exists, one has the asymptotic behavior,
\be \Eq(-1.1)
\int_{r\partial \D} |h'(z)^\b| |dz|\asymp \left(1-r\right)^{-b_h(\b)}, \quad r\to 1^{-},
\ee
in the sense of the equivalence of the logarithms. 

When the conformal mapping $h$ is \emph{random}, one is usually lead to first define an \emph{average} integral means spectrum, where one takes the expectation of the l.h.s. of \eqv(-1.1), and the question naturally arises of the comparison of the average spectrum and of the almost sure one.   For instance, the \emph{average} integral means 
spectrum of the celebrated Schramm-Loewner evolution \cite{OS} is given, for  the bounded version of whole-plane SLE$_\kappa$, by  the convex function for real $\b$ \cite{Duplantier00,BS,MR3638311}
\bea \Eq(-1.2)
b(\b,\kappa)-\b+1&=& 1+2\tau -2\sqrt{b\tau}, \quad \tau=d(\kappa)-\b,
\\ \label{bk}
d(\kappa) &=&\frac{(4+\kappa)^2}{8\kappa},\quad \quad \,\,\,\quad \b\in [\b_1,\b_2],
\eea
 with $\b_1=-1-\frac{3}{8}\kappa, \b_2=\frac{3}{4}d(\kappa)$. Outside of that interval, an average spectrum associated with the SLE \emph{tip}  exists  for $\b\leq \b_1$, 
whereas the  average spectrum becomes \emph{linear}  for $\b\geq \b_2$ \cite{Duplantier00,PhysRevLett.88.055506,BS,MR3638311}. The \emph{almost sure tip spectrum} was first obtained in  \cite{zbMATH06126650}, whereas the almost sure version of the SLE \emph{bulk spectrum} was finally established in   \cite{gwynne2018}, using the so-called \emph{imaginary geometry} of Miller and Sheffield. The \emph{a.s.} bulk spectrum was found to be identical to the average one   \eqv(-1.2), except that its transition to a linear spectrum happens before $\b_2$, exactly at the point where the intersection of its tangent with the vertical axis leaves the $[0,-1]$ interval, {\it i.e.}, Makarov's criterion for a spectrum to be that of an actual conformal map \cite{MR1629379}. An \emph{a.s. boundary spectrum} is also established in  \cite{zbMATH06571699}, further extended in \cite{zbMATH07250114}.

\subsection{Universal spectra}
   \TH(S0.2) 
The so-called universal spectra are obtained when one considers the supremum of all conformal maps over bounded (vs. unbounded), simply connected planar domains. In particular, the so-called \emph{pressure spectrum}  $\mathrm B(\b)$ is defined as the supremum over univalent holomorphic functions $h$ from $\mathbb D$ to \emph{bounded} domains $\Omega$ as 
\be\nonumber
\mathrm{B}(\b)=\underset{\Omega}{\sup} \,b(\b),\quad \Omega\, \,\textrm{bounded},
\ee
whereas a universal spectrum $\mathrm B_{\bullet}(\b)$ is similarly defined  for \emph{unbounded} domains. 

For real parameter $\b$, a well-known result from Makarov \cite{MR1629379} is the following relation between the bounded vs. unbounded spectra,
\be \label{Makbu}
\mathrm B_{\bullet}(\b)=\max\{\mathrm B(\b),3\b-1\},\quad \b\in \mathbb R.
\ee
A number of  exact results and outstanding conjectures are known for the universal integral means spectra.  
(See, {\it e.g.}, the detailed survey by Hedenmalm and Sola \cite{MR2419488}, and the treatise by Garnett and Marshall \cite{MR2450237}. See also Section \ref{App} for a more detailed historical perspective.)

\subsection{Kraetzer Conjecture} 
All known conjectures are encompassed by the well-known \emph{Kraetzer conjecture} \cite{MR1427159} for the universal spectrum, 
\be\label{K}
\mathrm B_{\mathcal K}(\b)=\frac{\b^2}{4}, \quad \b\in [-2,2],
\ee
which is supplemented  for $|\b|\geq 2$ by the  linear integral means spectrum, 
\begin{equation} \label{t-1}
\mathrm B_{\mathcal K}(\b)=|\b|-1,\quad |\b|\geq 2,
\end{equation}
a classical result for the universal spectrum in the case $\b\geq 2$   \cite{MR1217706}.

For complex values of $\b$, this conjecture was extended by I. Binder \cite{Bin09}, under the simple form,
\be\label{Kc}
{\mathrm B}_{{\mathcal K}}(\b)=
\begin{cases}\frac{1}{4}|\b|^2,  \,\,\,t\in \mathbb C, \,\,\,|\b| \leq 2,&\\ 
|\b|-1,\quad\quad \quad |\b| \geq 2.&
\end{cases}
\ee

\medskip
As shown in Section \ref{App}, the SLE  integral means spectrum \eqv(-1.2) satisfies Kraetzer's bounds. However, despite all the explicit multifractal properties exhibited by the conformally invariant SLE processes, this family did not allow for sharp bounds in the problem of universal multifractal spectra. This situation was 
especially reflected  upon by  P. Jones in 2005  \cite{10.1007/3-540-30434-7_7}, when he wrote the following lines: 

\emph{``It was understood in the 1990's that dynamical systems and Conformal Field Theory were intimately tied to these problems, at least in dimension two. It is a curious state of affairs that CFT as it is understood today, makes no notable predictions for these problems. This is perhaps even more surprising in light of the great success CFT has had with SLE. Exactly how the full story will unfold remains mysterious. My guess is that we are missing fundamental concepts, and that the future will bring some unsuspected, closer connections between analysis and statistical physics.''}

Since that time, it does not seem that much more knowledge was gained. However, we mention here the interesting work by Hedenmalm \cite{MR3649242}, where
for a holomorphic function $g: \mathbb D \mapsto \mathbb C$, he defined the function $\tilde b_g: \mathbb C \to [0,\infty]$, as 
\be \Eq(eden)
\tilde b_g(t) := \underset{r\to 1^{-}}{\limsup} 
\frac{\log \int_{r\partial \D} \big|e^{\b g(z)}\big| |dz|}{\log\left(\frac{1}{1-r}\right)},
\ee
which he called the ``exponential type spectrum'' of the (zero-free) function $e^g$.  In the case of  a function $g$ belonging to the Bloch space, and which is the Bergmann projection of a  bounded function $\mu$ on $\mathbb D$,  with norm $||L_\infty(\mu)||\leq 1$, he showed that its exponential spectrum \eqv(eden) is bounded above by the Binder-Kraetzer spectrum \eqv(Kc), with $\tilde  b_g(\b)\leq {\mathrm B}_{\mathcal K}(\b)$ for all $\b\in \mathbb C$. 

\subsection{Relation to the random Riemann $\z$-function and to the Random-Energy Model} In the present work, we shall try and address some of Peter Jones' queries. We present an explicit (random) function, whose integral means spectrum is (almost surely) exactly of the Kraetzer extended form in the complex setting \eqv(Kc). It is intimately related to the \emph{Riemann zeta-function}, more precisely to the \emph{primitive} of a \emph{random} version of the latter, introduced some time ago by B. Bagchi \cite{Bag81}.  

This randomized $\z$-function, $\z_{\mathrm{rand}}(\s+ih)$, is known to represent the asymptotic statistical behavior of the shifts of the  actual Riemann function $\z(\s+ih)$ in the critical strip, $1/2 <\s\leq 1, h\in \mathbb R$, for large values of $h$  \cite{MR1555301,MR1555343,Bag81,MR4041109}, and appears in a number of recent works \cite{HLS,H13,Harper19,ABH,AT19,MR3906393,SW,MR4441506,MR4718391}, in relation to the statistics of log-correlated random variables (see, {\it e.g.}, \cite{LPA17}).  Its natural truncations provided a test bed \cite{H13,ABH} for studying the asymptotic behavior of the moments and maxima of the $\z$-function on short intervals along the critical axis $\s=1/2$, a prominent question initiated by the Fyodorov-Hiary-Keating conjecture \cite{PhysRevLett.108.170601,MR3151088}, which has recently been the focus of a wealth of studies and witnessed spectacular progress \cite{MR3851835,MR3911893,MR4348686,2020arXiv200700988A,2023arXiv230700982A}. 
 
 Another domain which has seen important advances is that of the so-called \emph{Gaussian multiplicative chaos} (GMC), initiated by Kahane in 1985 \cite{K}, and recently reborn with the application of probabilistic methods to Liouville quantum gravity (LQG) \cite{DS,rhodes-2008}. Rigorous connections between probabilistic number theory and GMC appeared recently. It has been shown in \cite{SW} that for $\o$ uniformly distributed on $[0,1]$, the Riemann function on the critical line, $\z(\frac{1}{2}+i\o T+ih)$, converges as $T\to \infty$ to a random generalized function of $h$,  the non-smooth part of which is  precisely a holomorphic Gaussian multiplicative chaos distribution.  The  \emph{holomorphic multiplicative chaos} on the unit disc, first thoroughly studied  in \cite{MR4597318}, proved to be an important stochastic object intimately related to LQG on the unit circle.
 
 The moments  of the $\z$-function in a short interval  obtained  in \cite{MR4348686}, as well as those of the random model $\z_{\mathrm{rand}}$ studied in \cite{AT19}, exhibit the scaling behavior and the so-called \emph{freezing transition}  conjectured in \cite{PhysRevLett.108.170601,MR3151088}:  the event,
\be\int_{[-1,1]}\left|\zeta\left (\frac{1}{2}+it+ih\right)\right |^{\b} dh= 
\begin{cases}(\log T)^{\b^2/4+\mathrm{o}(1)},&\quad \quad \mathrm{for}\,\,\, \b\leq 2,\\
                     (\log T)^{\b -1+\mathrm{o}(1)},&\quad \quad \mathrm{for}\,\,\, \b > 2,
                      \end{cases}
\label{FHK}\ee
 with the  random shift parameter $t$ chosen uniformly in $[T,2T]$, has probability $1-\mathrm{o}(1)$ as $T\to \infty$. Naturally, one can not stay unfazed by the similarity of the scaling exponents in \eqv(FHK) to those of the Kraetzer conjecture seen above. Another similarity of those exponents and of the freezing-transition at $\b=2$, already mentioned in \cite{PhysRevLett.108.170601,MR3151088,LPA17,AT19,MR4348686}, is that with the free energy and the  freezing transition of the \emph{Random Energy Model}, invented by B. Derrida in 1980 as the simplest possible model of a spin-glass \cite{De1} and rigorously studied in \cite{OP84}. 
 
 \subsection{Statement of main results} \TH(S1.0)
 Let $\mathcal P$ be the set of prime numbers, and $(\theta_p$, $\text{$p \in \mathcal P$})$ a collection of i.i.d.~random variables uniformly distributed on $[0,1]$, so that $(U_p=e^{2\pi i \theta_p}$,  $\text{$p \in \mathcal P$})$ is uniformly distributed on the unit circle. The random zeta-function is defined in the half-plane, $s=\s+ih, \s>1/2, h\in \R$, as,
\be
\z_{\mathrm{rand}}(s):=\prod_{p\in \mathcal P}\left(\frac{1}{1-p^{-s}U_p}\right).
 \Eq(0.0bis)
\ee
The variables $(U_p$, $\text{$p \in \mathcal P$})$ are defined on a common probability space, $(\O,\FF,\P)$. Expectation with respect to $\P$ is denoted by $\E$. We also define the primitive $F$ of the randomized zeta function by setting
\be\Eq(eq:F)
F(s):=\int_1^s \z_{\mathrm{rand}}(z)dz,
\ee
for $\sigma>1/2.$

For $\b\in \C$, we are interested in the complex moments of its derivative, $F'(s)=\z_{\mathrm{rand}}(s)$, in a finite interval. 
The first result of this section is the following. 
\begin{theorem}
    \label{th:zeta}
$ \P$-almost surely, it holds for all $\b \in \mathbb C$ that
\be
\lim_{\s\rightarrow{\frac{1}{2}}^+}\frac{\log\int_0^1\left|\big(\z_{\mathrm{rand}}(\s+ih)\big)^\b\right|dh}{\log\left[(\s-\frac{1}{2})^{-1}\right]}=f(\b),
\Eq(1.theo1.2bis)
\ee
 where 
\be
f(\b) = 
\begin{cases}
\frac{1}{4}|\b|^2& \text{if $|\b|\leq 2$,} \\
|\b|-1 &  \text{if $|\b|\geq 2$.}
\end{cases}
\Eq(1.theo1.3bis)
\ee
The convergence also takes place  in $L^q(\O,\FF,\P)$ for any $0\leq q<\infty$.
\end{theorem}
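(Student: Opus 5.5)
We reduce the statement to the multifractal analysis of a log-correlated field. Write $\b=a+ib$. Since $\z_{\mathrm{rand}}$ has no zeros for $\s>1/2$, we fix the branch
\[
\log\z_{\mathrm{rand}}(s)=\sum_{p}\sum_{k\ge1}\frac{U_p^k p^{-ks}}{k}
\]
and put $X_\s(h)=\mathrm{Re}\log\z_{\mathrm{rand}}$ and $Y_\s(h)=\mathrm{Im}\log\z_{\mathrm{rand}}$. Then $|\z_{\mathrm{rand}}^\b|=e^{aX_\s-bY_\s}$. For any fixed unit vector $u=(\cos\phi,\sin\phi)$ the process $aX-bY=|\b|\,\langle u,(X,-Y)\rangle$ is, up to the terms with $k\ge2$ (which converge absolutely and uniformly for $\s>1/2-\delta$ after removing finitely many primes), a sum of independent centred variables
\[
\sum_p p^{-\s}\cos(2\pi\theta_p - h\log p+\phi').
\]
Its variance is $\tfrac12\sum_p p^{-2\s}\sim \tfrac12\log\frac{1}{\s-1/2}$, and its covariance at $h,h'$ is $\approx \tfrac12\log\min\bigl((\s-\tfrac12)^{-1},|h-h'|^{-1}\bigr)$, by the prime number theorem. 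So $Z_\s:=|\b|\langle u,(X,-Y)\rangle$ is a log-correlated field with variance $\frac{|\b|^2}{2}L$, where $L=\log\frac1{\s-1/2}$. The target exponent is exactly the REM free energy with parameter $\gamma^2=|\b|^2/2$: we get $\gamma^2/2=|\b|^2/4$ below the transition and $\sqrt2\gamma\cdot\sqrt{1}\,-1=|\b|-1$ above it, the transition occurring at $|\b|=2$. The dependence on the direction $u$ disappears because the phases $\theta_p$ are rotation invariant.

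\textbf{Upper bound.} First fix $\b$ and compute $\E\int_0^1 e^{Z_\s}dh$. By independence this factorises into products of $\E\exp(p^{-\s}\cdot)$, each $=I_0$-type Bessel factor $\exp(\frac{|\b|^2}{4}p^{-2\s}+O(p^{-4\s}))$, which gives $L^{|\b|^2/4+o(1)}$ in logarithmic scale. Markov plus Borel--Cantelli along $\s_n=1/2+e^{-n}$ gives the almost sure bound $\le |\b|^2/4$ for each fixed $\b$. For $|\b|>2$ we use instead $\int e^{Z}\le e^{(|\b|-\beta_0)\max Z}\int e^{\beta_0 Z/|\b|\cdots}$ with $\beta_0=2$, i.e. Hölder/monotonicity in $|\b|$. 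Combined with the almost sure maximum bound $\max_{h\in[0,1]}|\log\z_{\mathrm{rand}}(\s+ih)|\le(1+o(1))L$, this gives $|\b|-1$. The maximum bound itself follows from a union bound over a grid of mesh $e^{-L}$ together with Gaussian-type (sub-Gaussian, Hoeffding) tails of the sum and Bernstein-type derivative control.

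\textbf{Lower bound.} For $|\b|<2$ we use a second-moment (or thick-point) argument. We restrict to $h$ where the increments of $Z$ along the prime scales $\log\log p\in[j,j+1]$ follow the tilted profile, and show that this set has Lebesgue measure $e^{-(|\b|^2/4)L(1+o(1))}$ relative to the tilted mean, with concentration via a truncated second moment, as in Harper/Arguin--Tai. For $|\b|\ge2$ it suffices to exhibit an interval of length $e^{-L}$ on which $Z\ge (|\b|-o(|\b|))L$, i.e. $\max\langle u,\log\z\rangle\ge(1-o(1))L$, which is the lower bound for the maximum (again by a second moment on thick points). The $k\ge2$ terms and the small primes contribute $O(1)$ and are harmless.

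\textbf{Uniformity and $L^q$.} To pass from fixed $\b$ to all $\b\in\C$ simultaneously we use countable dense sets of $\b$ and $\s_n$. The function $\b\mapsto\log\int|\z^\b|$ is convex in $(a,b)$ (log-convexity of Laplace transforms), so pointwise convergence on a dense set upgrades to locally uniform convergence. Interpolating between $\s_{n+1}\le\s\le\s_n$ uses $|\partial_\s\log\z_{\mathrm{rand}}|\lesssim e^{n(1+\epsilon)}$ uniformly on the segment, so changes of $\s$ within the block cost $O(1)$. Finally, $L^q$ convergence follows from uniform integrability. The ratio is bounded above by $|\b|\max|\log\z|/L+O(1)$, which has all moments uniformly in $\s$ by the sub-Gaussian tails, and bounded below by $-|\b|\max|\log\z|/L$ similarly.

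The main obstacle is the lower bound in the subcritical range: controlling the second moment uniformly for complex $\b$ (both real and imaginary parts coupled) near $|\b|\to2$. Here we would follow the branching-random-walk truncation of the primes, with barriers on the partial sums.
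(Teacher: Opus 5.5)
Your overall plan parallels the paper's: prime blocks and the prime number theorem for the log-correlated structure, a first moment with Borel--Cantelli for the upper bound, a maximum bound for $|\beta|>2$, a Kistler-type truncated second moment for the subcritical lower bound, and uniform integrability for $L^q$. However, the passage from the sequence $\sigma_n=\frac12+e^{-n}$ to all $\sigma$ fails as written.

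\textbf{The interpolation in $\sigma$.} The block $[\sigma_{n+1},\sigma_n]$ has length $\asymp e^{-n}$. Your bound $|\partial_\sigma\log\zeta_{\mathrm{rand}}|\lesssim e^{n(1+\epsilon)}$ therefore produces a change of order $e^{\epsilon n}$ in $\log\zeta_{\mathrm{rand}}$, not $O(1)$. Divided by $L\asymp n$ this still diverges and wipes out the exponent.

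The real size of the derivative is $\max_h|\partial_\sigma\log\zeta_{\mathrm{rand}}|\approx e^{n}\sqrt n$, since its standard deviation is $\asymp(2\sigma-1)^{-1}$. So what is actually needed is that the oscillation of the field over each box of size $\asymp e^{-n}$ in $(\sigma,h)$ is $o(n)$, uniformly over the $\asymp e^{n}$ boxes covering $[\sigma_{n+1},\sigma_n]\times[0,1]$. The paper proves exactly this by a joint chaining argument in $(\sigma,h)$. Lemma~\ref{2.2.prop2}(ii) gives a tail $c\,e^{-c'y^{3/2}}$ for the oscillation on one box of size $2^{-n}$, and Lemma~\ref{3.lem2'}(iii) sums this over $2^n$ boxes. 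The same chaining, in Lemma~\ref{2.2.lem2}, yields the maximum bound uniformly in $\sigma$.

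Your sketch of the maximum bound (``grid of mesh $e^{-L}$ plus Bernstein-type derivative control'') hides the same scale problem. A Bernstein inequality on $\mathbb R$ is useless here: by Kronecker's theorem, the supremum over $\mathbb R$ of a truncated random Dirichlet polynomial $\sum_{p\le P}U_pp^{-\sigma-ih}$ equals $\sum_{p\le P}p^{-\sigma}$.

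One repair that avoids chaining:
\begin{itemize}
\item First prove a block-uniform maximum bound $\max|\log\zeta_{\mathrm{rand}}|\le(1+\epsilon)L$ without derivative information. For example, use subharmonicity of $|\exp(\lambda\sum_pU_pp^{-s})|$ together with the exact Bessel-product first moment and a union bound over $\asymp e^n$ discs.
\item Cauchy estimates on discs of radius $\asymp\sigma-\frac12$ then give $|\partial_\sigma\log\zeta_{\mathrm{rand}}|\lesssim n e^{n}$.
\item This still costs $O(n)$ over your block, which is too much. It costs only $O(n/m)$ if you place $m$ points per block and let $m\to\infty$ after $n\to\infty$.
\end{itemize}

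\textbf{The $k=2$ terms.} The terms $\frac12\sum_pU_p^2p^{-2s}$ are not absolutely convergent near $\sigma=\frac12$. Removing finitely many primes does not help, since $\sum_pp^{-2\sigma}$ diverges at $\sigma=\frac12$. The trivial bound $\frac12\sum_pp^{-2\sigma}\sim\frac12L$ would shift the exponent by up to $|\beta|/2$. Their almost sure boundedness comes from random cancellation: a.s.\ convergence at $\sigma=\frac13$ plus Jensen's lemma for Dirichlet series, as in Lemma~\ref{le:monday}.

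\textbf{Comparison with the paper once repaired.} With these points fixed, your route is a legitimate variant of the paper's.
\begin{itemize}
\item In the upper bound, you bound $\mathbb{E}\int_0^1e^{Z_\sigma}\,dh$ directly by the Bessel product and compare with $\max Z_\sigma$ for $|\beta|>2$. This replaces the paper's level-set (REM) decomposition through $\mathcal M^\sigma_n(\gamma)$.
\item Your convexity argument in $(a,b)$ correctly upgrades the rotation-invariance reduction to all complex $\beta$ simultaneously.
\item You do not need a separate lower bound on the maximum for $|\beta|\ge 2$. It follows from the subcritical lower bound by convexity, since the slope of $\beta^2/4$ at $\beta=2$ is $1$, as at the end of the proof of Theorem~\ref{th:prob}.
\end{itemize}
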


We also consider a unit disc analogue of the random model above in the right half-plane, which is related to a canonical model of holomorphic multiplicative chaos \cite{MR4597318}. Thus, consider the random analytic function $F$ on $\D:=\{ z\in\C\, :\,|z|<1\}$, defined via the derivative
\be\Eq(defF)
F'(z):=\exp\left(\sum_{n=1}^\infty \frac{G_nz^n}{\sqrt{n}}\right)\qquad \textrm{and}\;\; F(0)=0.
\ee
Here the variables $G_n=(V_n-iW_n)/\sqrt{2}
$ are independent  standard complex Gaussians. Writing $z=re^{i\theta}$ we may write
$$
|F'(z)|=\exp(U(z)),
$$
with
$$
U(z)=U(re^{i\theta}):=\log |F'(re^{i\theta})|=\sum_{n=1}^\infty\frac{r^n}{\sqrt{2n}}\big(V_n\cos(n\theta)+W_n\sin(n\theta)\big).
$$
Thus $U$ is a random harmonic function on $\D$, and its restriction on the circle of radius $r<1$ is the natural approximation of the  log-correlated random distribution on the unit circle,
\be\Eq(eq:chaos)
X(\theta):= \sum_{n=1}^\infty\frac{1}{\sqrt{2n}}\big(V_n\cos(n\theta)+W_n\sin(n\theta)\big).
\ee
We then have the following result for the integral moments of $F'$.

\begin{theorem}
\label{th:prob}
$\mathbb P$-almost surely, it holds for all $\b \in \mathbb C$ that
\be
\lim_{r\to 1^-} \frac{\log\Big( \int_0^{2\pi}\big| \left(F'(re^{i\theta})\right)^{\b}\big| d\theta\Big)}{\log \big[(1-r)^{-1}\big]}=f(\b).
\Eq(1.theo2.1)
\ee
\end{theorem}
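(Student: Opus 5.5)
Writing $\b=a+ib$, we have $|F'(z)^\b| = \exp(a\,\mathrm{Re}\log F'(z) - b\,\mathrm{Im}\log F'(z)) = \exp(Y_\b(z))$. Here
\[
Y_\b(re^{i\theta})=\sum_{n\ge1}\frac{r^n}{\sqrt n}\,\mathrm{Re}\big((a+ib)G_ne^{in\theta}\big).
\]
Since $G_n$ is rotation-invariant in law, $Y_\b(re^{i\cdot})$ has the same law as $|\b|\,U(re^{i\cdot})$ jointly in $\theta$, for each fixed $\b$. The field $U_r:=U(re^{i\cdot})$ is Gaussian, with $\Var U_r(\theta)=\frac12\sum_n r^{2n}/n=\frac12\log\frac1{1-r^2}$ and covariance $\approx\frac12\log\frac1{|\theta-\theta'|\vee(1-r)}$. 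The statement therefore reduces, for fixed $\b$, to the real problem
\[
I_r(\gamma):=\int_0^{2\pi}e^{\gamma U_r(\theta)}d\theta = (1-r)^{-f(\gamma)+o(1)}, \qquad \gamma=|\b|.
\]
The claim is then $f(\gamma)=\gamma^2/4$ for $\gamma\le2$ and $f(\gamma)=\gamma-1$ for $\gamma\ge 2$. This is the REM/log-correlated freezing picture with $\epsilon=1-r$ and variance $\frac12\log(1/\epsilon)$.

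\textbf{Upper bound.} For $\gamma\le 2$, I would use $\E I_r=2\pi(1-r^2)^{-\gamma^2/4}$, Markov's inequality along $r_k=1-2^{-k}$, and Borel--Cantelli. For $\gamma>2$ (and in general), I would bound $\max_\theta U_r\le (1+\delta)\log\frac1{1-r}$ a.s. for large $r$; this follows from a union bound over a mesh of size $(1-r)^{2}$ together with a Bernstein-type derivative bound on the polynomial-like field. The two combine via a first-moment count of the sets $\{\theta: U_r(\theta)\approx x\log\frac{1}{1-r}\}$, whose expected measure is $(1-r)^{x^2}$: bounding $I_r\lesssim\sum_x (1-r)^{x^2-\gamma x}$ over $x\le 1+\delta$ gives $\max_x(\gamma x-x^2)=\gamma^2/4$ if $\gamma\le2$ and $\gamma-1$ otherwise, after the same Borel--Cantelli step. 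Interpolating between the dyadic radii $r_k$ uses the monotonicity of integral means of the subharmonic function $|F'|^\gamma$, so the $\limsup$ is controlled.

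\textbf{Lower bound.} This is the main obstacle: it is where thick points and concentration are needed. For $x<1$, I would show a.s. that the set of $x$-thick points $\{\theta: U_r(\theta)\ge x\log\frac1{1-r}\}$ has Lebesgue measure at least $(1-r)^{x^2+\delta}$. The method is a second-moment (Paley--Zygmund) argument on a scale decomposition of the field, $U_r=\sum_{j}$ (blocks $2^{j}\le n<2^{j+1}$), which are independent in $j$. To upgrade from positive probability to probability $1-o(1)$ summable along $r_k$, I would split $[0,2\pi]$ into $(1-r)^{-\eta}$ arcs, on each of which the high-frequency part of the field is nearly independent, so that the failure probability decays exponentially in the number of arcs; the low frequencies are handled by the a.s. bound on $\sup|U_{r_0}|$ at a coarse radius. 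Then $I_r\ge (1-r)^{x^2-\gamma x+\delta}$, and optimizing over $x\in(0,1)$ gives $f(\gamma)$.

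\textbf{Uniformity in $\b$.} To obtain the statement simultaneously for all $\b\in\C$ almost surely, I would prove it for a countable dense set of $\b$ and use continuity. The map $\b\mapsto\log\int|F'^\b|$ is convex in $(a,b)$ by Hölder's inequality, and the normalized functions converge pointwise on a dense set to the continuous convex limit $f$. Convex functions converging on a dense set converge locally uniformly, so the full claim follows once the monotone interpolation in $r$ handles the $\limsup$/$\liminf$ between dyadic radii.
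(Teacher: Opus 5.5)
Your route is sound in outline (modulo a fixable slip noted below), but it differs genuinely from the paper's proof of this theorem.

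\textbf{The paper's route.} It works through Gaussian multiplicative chaos. For $\beta\in(0,2)$, the upper bound is the same first-moment/Markov/Borel--Cantelli step as yours. The lower bound comes from the uniform negative-moment bound $\mathbb{E}\,\mu_{\beta,r}(\partial\mathbb{D})^{-1}\le C$ of Lemma~\ref{le:gmc}, which is imported from GMC theory via Kahane's convexity inequality (Corollary~\ref{cor:helppo}). For $\beta\ge 2$, the upper bound uses the maximum bound of Lemma~\ref{le:max}, taken from an external result on maxima of log-correlated fields. The matching lower bound needs no further probability at all: only convexity of $\beta\mapsto\phi_r(\beta)$ and the fact that $\beta^2/4$ has slope $1$ at $\beta=2$.

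\textbf{Your route.} You instead run the REM-type level-set decomposition with a multiscale second moment for thick points. This is essentially the paper's own proof of Theorem~\ref{th:zeta} in Section~\ref{S3}, transplanted to the disc, where it is simpler because the frequency blocks are exactly Gaussian and independent. Your version is self-contained and gives more: a.s.\ asymptotics $(1-r)^{x^2+o(1)}$ for the measure of $x$-thick points, and the $\beta\ge2$ lower bound directly from $\sup_{x<1}(\gamma x-x^2)=\gamma-1$. The paper's version is a few lines long but leans on two black boxes. Your treatment of all $\beta\in\mathbb{C}$ at once (joint convexity in $(\Re\beta,\Im\beta)$ from H\"older, plus locally uniform convergence of convex functions from a countable dense set) also makes explicit a point the paper passes over. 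Rotation invariance in law by itself only handles one ray of $\beta$'s at a time.

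\textbf{Two details need repair.}

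First, the maximum bound fails as stated. A mesh of spacing $(1-r)^2$ has $(1-r)^{-2}$ points, each with tail $\approx(1-r)^{(1+\delta)^2}$. The union bound then only yields $\max_\theta U_r\le(\sqrt2+\delta)\log\frac1{1-r}$, hence the wrong exponent $\sqrt2\,\gamma-2$ for large $\gamma$. Use spacing $\asymp(1-r)$ up to logarithmic factors; the total tail is then $\approx(1-r)^{2\delta+\delta^2}$, summable along $r_k$. Control the oscillation within a cell by an a.s.\ bound $\sup_\theta|\partial_\theta U_r|=O\big((1-r)^{-1}\log\frac1{1-r}\big)$. Only for that derivative bound, whose tails are superpolynomial, is a very fine mesh harmless.

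Second, make sure your lower-bound events constrain the increment over each coarse block of scales, as the events $\mathcal{J}_h(m)$ in the proof of Lemma~\ref{3.lem2'} do, and not just the total field. For $Z=\mathrm{Leb}\{\theta: U_r(\theta)\ge x\log\frac1{1-r}\}$ itself, the ratio $\mathbb{E}Z^2/(\mathbb{E}Z)^2$ grows like $(1-r)^{-(\sqrt2x-1)^2}$ once $x>1/\sqrt2$. With block-wise events, and with the lowest coarse block left out of them, the second-moment ratio is already $1+O((1-r)^{c/K})$ up to logarithmic factors. Paley--Zygmund alone then gives summable failure probabilities along $r_k$. Your arc-splitting step, which would in any case need a genuine Gaussian decoupling argument, becomes unnecessary. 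The excluded low block is absorbed by the a.s.\ bound $\sup_\theta|\cdot|=O(K^{-1}\log\frac1{1-r})$ on it, or, as in the paper, by a first-moment bound on the set where it is very negative.
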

The proof of our first main result,  Theorem \ref{th:zeta}, draws on the work of  \cite{ABH} and \cite{AT19}. It relies on the \emph{multiscale decomposition}  of the random process, using the prime number theorem and Kistler's multiscale second moment method  \cite{K15}, in a practically self-contained way. The proof of the second result, Theorem \ref{th:prob}, rests on the basic properties of natural approximations of the associated  Gaussian multiplicative chaos  \eqv(eq:chaos). 
It is then suggestive of a path  to an alternative proof of Theorem \eqref{th:zeta} in the half-plane setting, which rests on the use of the Gaussian approximation of the randomized $\z$-function  established in \cite{SW}. 

Given the appearance of Kraetzer's integral means spectrum both in Theorem \ref{th:zeta} and Theorem \ref{th:prob}, a crucial question concerns the \emph{injectivity} properties of the primitive of $\z_{\mathrm{rand}}(\s+ih)$ \eqv(0.0bis)  in the critical strip, $1/2<\s\leq 1$, as well as those in the unit disc of the primitive $F$ of \eqv(defF). Using the Becker-Pommerenke injectivity criterion, we (unfortunately) answer to the negative.
\begin{proposition}\label{le:1}
 Let $a >0$ and let $Q:=Q_a:=(1/2, 1/2+2a)\times (-a,a)$ be a small square with the right side on the critical line $\s=1/2$. Then, almost surely the primitive $F$ \eqv(eq:F) of $\z_{\mathrm{rand}}$ \eqv(0.0bis) is not injective on $Q$. A fortiori, it is not injective in any neighborhood (to the right) of the critical line.
\end{proposition}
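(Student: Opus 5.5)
The plan is to argue by contradiction, using the standard distortion estimate for univalent functions, namely the Becker--Pommerenke/Koebe bound for the pre-Schwarzian. If $g$ is univalent on a disc $D(z_0,d)$, then
$$|g''(z_0)/g'(z_0)|\le 4/d.$$
Suppose $F$ were injective on $Q$. For $s=\s+ih$ with $|h|<a/2$ and $\s-\frac12<a/2$, the disc $D(s,\s-\frac12)$ lies in $Q$. Since $F'=\z_{\rand}$, injectivity would force
\be\nonumber
\Big|\frac{\z_{\rand}'}{\z_{\rand}}(\s+ih)\Big|\le \frac{4}{\s-\frac12}\qquad\text{for all such }(\s,h).
\ee
It therefore suffices to show that, almost surely,
$$\limsup_{\s\to\frac12^+}\ \sup_{|h|<a/2}\,(\s-\tfrac12)\,\big|(\log\z_{\rand})'(\s+ih)\big|=\infty.$$
This also gives the ``a fortiori'' claim, since any neighbourhood to the right of the line contains such a square.

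Next I expand the logarithmic derivative:
$$(\log\z_{\rand})'(s)=-\sum_p \log p\sum_{k\ge1}U_p^k p^{-ks}.$$
The $k\ge2$ terms converge absolutely and uniformly for $\s\ge\frac12+\delta$. At $\s\to\frac12$ they are at most $\sum_p \log p\, p^{-2\s}/(1-p^{-\s})=O(1/(\s-\frac12))$, but since these terms are again an independent sum, I would rather handle them as a random sum with variance $\sum_p(\log p)^2p^{-4\s}=O(1)$. After multiplying by $\e:=\s-\frac12$ they are therefore negligible uniformly in $h$ with high probability; I will double-check this, or absorb them into the main sum. The main term is
$$S_\e(h)=-\e\sum_p \log p\,U_p\,p^{-1/2-\e-ih}.$$
By the prime number theorem,
$$\E|S_\e(h)|^2=\e^2\sum_p(\log p)^2p^{-1-2\e}\to \tfrac14,$$
and the covariance between $h$ and $h'$ behaves like $\e^2/(2\e+i(h-h'))^2$. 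This tends to $0$ once $|h-h'|/\e\to\infty$.

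To conclude, fix $M>0$ and choose points $h_j=jK\e$, $|j|\le a/(2K\e)$, with $K$ large. The vector $(S_\e(h_j))_j$ is a sum of independent, uniformly small summands: each has size at most $\e\log p\,p^{-1/2}$, which is small once we truncate primes to $p\le e^{C/\e}$ and discard the high tail, whose variance is small. A multivariate CLT, or more robustly a second-moment/Paley--Zygmund count of the number $N$ of indices with $|S_\e(h_j)|>M$, should give $\P(\max_j|S_\e(h_j)|>M)\ge c>0$ uniformly in small $\e$. Here $\E N\gtrsim e^{-4M^2}a/(K\e)\to\infty$, and the pair correlations are controlled by the decay of the covariance above, via a two-point CLT or Gaussian-comparison estimate. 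By Fatou's lemma applied to the events $A_\e=\{\sup_h|\e(\log\z_{\rand})'|>M\}$ along $\e=\e_n\to0$, we get $\P(\limsup A_{\e_n})\ge c$.

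Finally, the event $\{\limsup_\e\sup_h \e|(\log\z_{\rand})'|=\infty\}$ is a tail event in $(U_p)$. Changing finitely many $U_p$ alters $(\log\z_{\rand})'$ by a quantity bounded uniformly near the line, which vanishes after multiplication by $\e$. Intersecting over $M\in\N$ and applying Kolmogorov's 0--1 law then gives probability one.

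The main obstacle is the uniform-in-$\e$ lower bound $\P(\max_j|S_\e(h_j)|>M)\ge c$. It requires a quantitative two-point decorrelation statement for sums over primes, that is, a joint CLT with error terms for pairs $(S_\e(h_i),S_\e(h_j))$ at separation at least $K\e$. I would first try to obtain it via characteristic functions of the truncated sums, using $\E U_p^{m}=0$ for $m\ne0$ and the prime number theorem to evaluate the covariances.
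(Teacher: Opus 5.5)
Your proof is correct, but it reaches almost-sure unboundedness by a different route than the paper.

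The reduction is the same in spirit: a Bieberbach/Koebe bound on $F''/F'=(\log\zeta_{\mathrm{rand}})'$, discarding the prime-power terms, and a main term with $\mathbb{E}|S_\varepsilon|^2\to 1/4$. Your inscribed discs $D(s,\sigma-\frac12)\subset Q$ give $d(s,\partial Q)\,|F''/F'|\le 4$ directly. This is a cleaner localization than the paper's conformal map onto $Q$ combined with Kellogg--Warschawski. Also, your deterministic bound $\varepsilon\cdot O(1/\varepsilon)=O(1)$ for the $k\ge2$ terms already suffices, since only unboundedness is needed.

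The paper then stays at the single point $h=0$ and manufactures independence. It picks $\sigma_k\searrow\frac12$ and disjoint prime ranges $[n_{k-1},n_k)$ inductively so that $\sum_k\mathbb{E}|Y_k-\widetilde Y_k|^2<\infty$. The block sums $\widetilde Y_k$ are then independent with variances bounded above and below, and Lindeberg's CLT plus Borel--Cantelli finish the argument.

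You instead take many points $h_j$ at one scale and use a pairwise decorrelation/second-moment estimate, reverse Fatou, and Kolmogorov's 0--1 law. Your tail-measurability argument is right: changing finitely many $U_p$ alters $(\log\zeta_{\mathrm{rand}})'$ by $O(1)$ uniformly on $\sigma\geq\frac12$.

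The step you flag as the main obstacle is doable. The multivariate Berry--Esseen bound for the $\mathbb{R}^4$-valued summands has error $O(\varepsilon^3)$, since $\sum_p(\varepsilon\log p\,p^{-1/2})^3=O(\varepsilon^3)$. The event $\{|S_i|>M,|S_j|>M\}$ reduces to convex sets by inclusion--exclusion. But the step is superfluous. Given reverse Fatou and the 0--1 law, the one-point CLT $\mathbb{P}(|S_\varepsilon(0)|>M)\to e^{-4M^2}>0$ already suffices. It shows that the tail variable $L:=\limsup_{\varepsilon\to0}\varepsilon|(\log\zeta_{\mathrm{rand}})'(\frac12+\varepsilon)|$ satisfies $\mathbb{P}(L\ge M)>0$, hence $\mathbb{P}(L\ge M)=1$, for every $M$.

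What each route buys: your multi-point version proves more, namely that at essentially every small scale (not just along a sparse sequence) the Koebe bound fails somewhere on the segment with high probability. The paper's construction needs neither the 0--1 law nor any joint-distribution estimate.
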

A similar result naturally holds in the disc setting.
\begin{proposition}\label{le:2}
Almost surely, the primitive $F$ of the random holomorphic function \eqv(defF) is not injective in the unit disc $\mathbb D$.
\end{proposition}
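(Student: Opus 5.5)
Proposition~\ref{le:2} concerns $F$ with $F'(z)=\exp(g(z))$, where $g(z)=\sum_{n\ge1}G_nz^n/\sqrt n$. The plan is to argue by contradiction through the growth of the pre-Schwarzian derivative. If $F$ were injective on $\D$, then $F'$ would be zero-free there (which holds automatically), and classical distortion theory (the Becker--Pommerenke necessary condition) gives
\[
(1-|z|^2)\left|\frac{F''(z)}{F'(z)}\right|\le 6,\qquad z\in\D .
\]
Since $F''/F'=g'$, injectivity forces $\sup_{z\in\D}(1-|z|^2)|g'(z)|\le 6$, i.e.\ $g$ lies in the Bloch space with Bloch seminorm at most $6$. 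It therefore suffices to show that almost surely
\[
\sup_{z\in\D}(1-|z|)|g'(z)|=\infty .
\]

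To prove this, I would use the Gaussian structure of $g'(z)=\sum_{n\ge1}\sqrt n\,G_nz^{n-1}$. Its variance is
\[
\E|g'(z)|^2=\sum_n n\,r^{2n-2}\asymp (1-r)^{-2},
\]
so the normalised field $Y_r(\theta)=(1-r)g'(re^{i\theta})$ is a stationary complex Gaussian process on the circle of radius $r$, with variance of order $1$ and correlation length of order $1-r$. On the circle of radius $r_k=1-2^{-k}$, pick $N_k\asymp 2^k$ equally spaced points. Points at mutual distance $\gg 1-r$ are nearly decorrelated: the covariance $\sum_n n\,r^{2n-2}e^{in(\theta-\theta')}$ decays like $(1-r)^{-2}/(1+|\theta-\theta'|/(1-r))^{2}$ after normalisation. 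Hence the maximum of $|Y_{r_k}|$ over these points should be of order $\sqrt{\log N_k}\asymp\sqrt k\to\infty$ with probability tending to $1$, and this already contradicts the bound $6$. To make this rigorous, either of two routes should work.
\begin{itemize}
\item Apply Sudakov's minoration, or a second-moment argument on the number of points where $|Y_r|>M$, to get $\P\big(\sup_\theta |Y_{r_k}(\theta)|>M\big)\to1$ for each fixed $M$.
\item Use a $0$--$1$ law: the event $\{\sup_z(1-|z|)|g'(z)|<\infty\}$ is a tail event in $(G_n)$, because changing finitely many $G_n$ changes $g'$ by a polynomial, which does not affect finiteness of the Bloch norm. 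Its probability is therefore $0$ or $1$, and the previous bullet excludes $1$.
\end{itemize}
Finiteness of the supremum has probability $0$, so almost surely $g$ is not Bloch and $F$ is not injective.

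An alternative, cleaner route is a scaling argument. Take the independent block $g_k$ built from the frequencies $n\in[2^k,2^{k+1})$, evaluated at a single point. The quantity $(1-r_k)|g_k'(r_k)|$ is a non-degenerate Gaussian modulus, so $\P(\,\cdot>M)\ge c_M>0$ uniformly in $k$. The blocks are independent, but the remaining frequencies contribute to $g'(r_k)$, and their contribution must be controlled; this is exactly the decorrelation step above, which I expect to be the main technical obstacle. I would handle it by conditioning, using the fact that $g'(r_k)$ given the other blocks is still Gaussian with variance $\gtrsim(1-r_k)^{-2}$. Then $\P\big((1-r_k)|g'(r_k)|>M \mid \text{other blocks}\big)\ge c_M$, and a Lévy-type conditional Borel--Cantelli argument along a sparse subsequence gives infinitely many exceedances almost surely.
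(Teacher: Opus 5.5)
Your main route is correct, and it differs from the paper's proof.

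\textbf{Your route.} After Becker--Pommerenke (constant $6$), you need $\sup_{z\in\mathbb D}(1-|z|)|g'(z)|=\infty$ almost surely, and your circle argument gives this. The field $Y_r(\theta)=(1-r)g'(re^{i\theta})$ has $\mathbb E|Y_r|^2=(1+r)^{-2}$ and $\mathbb E[Y_r(\theta)Y_r(\theta')]=0$. Its normalized correlation has modulus $(1-r^2)^2/|1-r^2e^{i(\theta-\theta')}|^2$. Hence $\asymp 2^k$ points with spacing comparable to $1-r_k$ have summable correlations. Either Sudakov plus Borell--TIS for $\Re Y_{r_k}$, or a second-moment count, then gives $\mathbb P(\max_j|Y_{r_k}(\theta_j)|>M)\to1$.

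This alone yields $\mathbb P\bigl(\sup_z(1-|z|)|g'(z)|>M\bigr)=1$ for every $M$, so the $0$--$1$ law step is unnecessary. In fact $N$ fixed angles with $r\to1$ already suffice: the correlations vanish, so $\mathbb P(\sup\le M)\le(1-p_M)^N$ for all $N$, where $p_M>0$ is the exceedance probability of the limiting Gaussian.

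\textbf{The paper's route.} The paper uses scale decorrelation along a single radius rather than spatial decorrelation on a circle. It chooses inductively radii $r_k\uparrow1$ and frequency windows $[n_{k-1},n_k)$ so that the off-window part of $(1-r_k)g'(r_k)$ has second moment at most $2^{-k}$. The windowed variables are then independent Gaussians with variances bounded below, hence a.s.\ unbounded by the second Borel--Cantelli lemma. The residuals tend to $0$ a.s.\ by summability.

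\textbf{What each buys.} The paper's argument needs only second moments and independence. It localizes at one boundary point, which is exactly what the localized Becker--Pommerenke criterion in Proposition~\ref{le:1} requires. It also transfers to the non-Gaussian $U_p$ via Lindeberg's CLT. Your argument is more quantitative: with high probability the normalized pre-Schwarzian reaches order $\sqrt{\log(1/(1-r))}$ on every circle. Transferring it to $\zeta_{\mathrm{rand}}$, however, would need joint control of the non-Gaussian field at many points.

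\textbf{Gap in your alternative block route.} As written it does not close. Conditioning on all blocks other than the $k$-th does give $\mathbb P(A_k\mid\mathcal G_k)\ge c_M$: by Anderson's inequality, a mean shift cannot increase the mass of a centered disc. But the $\sigma$-fields $\mathcal G_k$ do not form a filtration to which the $A_k$ are adapted, since each $A_k$ depends on all frequencies, later blocks included. So L\'evy's conditional Borel--Cantelli lemma does not apply. Moreover, the neighbouring frequencies (below $2^k$ and in $[2^{k+1},C2^k]$) contribute an order-one part at $r_k$, so this dependence does not fade. Repairing it forces you to sparsify the radii and widen the windows until the off-window part is summably small in $L^2$, which is precisely the paper's construction.
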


The function appearing in \eqv(1.theo1.3bis) is well known in statistical mechanics as the free energy of the Random Energy Model (REM) \cite{De1}.  Often called the simplest model of a spin glass, the REM is a collection, $(H_i, 1\leq i\leq N)$, of $N$ independent and identical Gaussian random variables with mean zero and variance $(\ln N)/2$. Given $\b>0$ and  defining the partition function as  $Z_{\b, N}=N^{-1}\sum_{i=1}^{N}e^{\b H_i}$, we have\footnote{We chose the parameters so that the limit in  \eqv(REM.1) matches the function $f(\b)$ in  \eqv(1.theo1.3bis). In the classical REM, a collection of $N:=2^M$ centred Gaussians of variance $M$ is considered, and $Z_{\b, N}=\sum_{i=1}^{N}e^{\b H_i}$.}
\be
\lim_{N\rightarrow \infty}\frac{1}{\ln N}\ln Z_{\b, N}
= \max_{x\in[0,1]}\left\{\b x-x^2\right\}
= 
\begin{cases}
\frac{1}{4}\b^2& \text{if $\b\leq 2$,} \\
\b-1 &  \text{if $\b\geq 2$.}
\end{cases}
\Eq(REM.1)
\ee
Convergence holds both almost surely and in mean \cite{OP84}.
The non-analyticity of the limiting function at $\b=2$ signals a so-called phase transition between a high-temperature phase ($\b\leq 2$) and a low temperature spin-glass phase ($\b\geq 2$). The form of the limit \eqv(REM.1) arises from the the properties of the extreme order statistics of the family $(H_i, 1\leq i\leq N)$: when $\b\leq 2$, the sum $Z_{\b, N}$ is dominated by the large number of its typical increments, whereas when $\b\geq 2$, it is dominated by the small number of its extreme increments.

In the last decades, the REM behaviour has proved to have some degree of universality, in that all models with extreme order statistics that are, to leading order, the same as those of i.i.d.~Gaussian r.v.'s, are expected to have the same log-partition function as the REM. This encompasses the class of log-correlated processes.
Classical examples of these processes include Branching Brownian Motion (BBM) and its close relative, Branching Random Walk (which models random polymers on trees), as well as the two-dimensional Gaussian Free Field \cite{LPA17}, \cite{MR3151088}.

The REM has also been studied in the case of complex $\b$ \cite{De91}, \cite{KaKli}. There, a richer picture emerges. The limit of the log-modulus of the partition function of the complex REM takes on three different forms in three distinct regions of the plane: one shaped like an eye with an inscribed circle, and two vertical bands. Again, a similar pattern emerges from complex log-correlated processes, as established in  \cite{HaKli} for the complex BBM, and in \cite{arXiv:1905.12027,JSW19} for more general fields.

This article is organized as follows. In Section \ref{S2} we gather a number of key probability estimates related to the moments of the random zeta function \eqref{0.0bis}. At their root lies the prime number theorem and its use to control short and large distance correlations. Section \ref{S3} is devoted to the probabilistic proof of Theorem \ref{th:zeta}. Section \ref{S4} provides the proofs of Propositions \ref{le:1} and \ref{le:2} concerning the non-injectivity of the primitives of the random functions \eqref{0.0bis} and \eqref{defF}. The relation to complex Gaussian multiplicative chaos is developed in Section \ref{S5}, first with the proof of Theorem \ref{th:prob} for the holomorphic chaos \eqref{defF} in the disc. An alternative GMC approach to Theorem \ref{th:zeta} is finally presented. Section \ref{App} serves as an Appendix offering a  historical perspective on the multifractal analysis of harmonic measure in the plane, and the various and celebrated conjectures pertaining to it.



\section{Preliminaries}
    \TH(S2)

\subsection{Notations and preliminary observations}
We denote by $\mathcal P$ the set of prime numbers. Expectation with respect to $\P$ is denoted by $\E$.

Recall the randomized Riemann zeta function from \eqref{0.0bis},
$$
\zeta_\rand(s)=  \prod_{p\in\mathcal P}^\infty\left(\frac{1}{1-p^{-s}U_p}\right) \quad\textrm{in}\quad \sigma>1/2,
$$
 {\it i.e.,} the derivative of $F$ \eqv(eq:F).  
As we are interested in the behaviour of integral means of this quantity near the critical line $\s=1/2$, we would like to control $\log \zeta_\rand(s)$ up to an almost surely   bounded factor. In order to simplify our further considerations it is thus useful to write (if needed, first for $\sigma>1$, and then using analytic continuation)
$$
\log \zeta_\rand(s) = -\sum_{p \in \mathcal P} \log (1-p^{-s}U_p) = \sum_{p \in \mathcal P} U_pp^{-s} + B(s)
$$
with $B=B_1+B_2$, where
$$
B_1(s):= \frac{1}{2}\sum_{p \in \mathcal P}(U_p)^2p^{-2s}\qquad\textrm{and}\quad B_2(s):= \sum_{k\geq 3}\frac{1}{k}\sum_{p \in \mathcal P}(U_p)^kp^{-ks}.
$$

The analytic continuation alluded to above, and over all the fact that $\zeta_\rand(s)$ is well-defined, both follow from the following observation.
\begin{lemma}\label{le:monday}
The series defining $\log \zeta_\rand(s)$ converges almost surely uniformly in any compact subset of $\{\sigma>1/2\}$, and hence defines an analytic function there. In turn, almost surely both the series defining $B_1$ and $B_2$ converge locally uniformly in any compact subset of $\{\sigma>1/3\}$. Especially, $B$ is almost surely analytic and bounded in $\{\sigma\geq 1/2\}.$
\end{lemma}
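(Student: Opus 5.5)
The approach has three parts: handle the main series $\sum_p U_p p^{-s}$ by a probabilistic argument, and the tails $B_1$, $B_2$ by deterministic absolute-convergence bounds.

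\textbf{Main term.} For the leading series I view it as a sum of independent, centred, bounded random variables with values in the Banach space of holomorphic functions on a compact set $K\subset\{\sigma>1/2\}$. Fix $\sigma_0>1/2$ and work on the half-plane $\{\sigma\geq\sigma_0\}$, restricted to a bounded rectangle. For each fixed $s$ with $\sigma>1/2$, the terms $X_p:=U_pp^{-s}$ are independent with $\E X_p=0$ and $\sum_p\E|X_p|^2=\sum_p p^{-2\sigma}<\infty$. Hence by Kolmogorov's two-series theorem, or the $L^2$ martingale convergence theorem, the series converges almost surely for each fixed $s$. To obtain locally uniform convergence I would avoid a chaining argument and use holomorphy instead. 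The partial sums $S_N(s)=\sum_{p\leq N}U_pp^{-s}$ form an $L^2$-bounded martingale with values in the Bergman space $A^2(D)$ of a disc $D$ whose closure lies in $\{\sigma>1/2\}$, because
$$\E\|S_N-S_M\|_{A^2(D)}^2=\sum_{M<p\leq N}\|p^{-s}\|^2_{A^2(D)}\leq C_D\sum_{M<p\leq N}p^{-2\sigma_D},$$
where $\sigma_D>1/2$ is the minimal real part on $D$. The Hilbert-space-valued martingale convergence theorem (or Doob's maximal inequality applied in $A^2(D)$) then gives almost sure convergence in $A^2(D)$. By the mean value property, $A^2(D)$-convergence implies uniform convergence on any smaller disc. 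Covering $\{\sigma>1/2\}$ by countably many such discs gives almost sure locally uniform convergence, so the limit is analytic.

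\textbf{The terms $B_1$ and $B_2$.} These need no probability, since $|U_p|=1$. For $B_2$ and $\sigma\geq\sigma_1>1/3$, we have $\sum_{k\geq3}\frac1k\sum_p p^{-k\sigma}\leq \sum_p\sum_{k\geq 3}p^{-3\sigma_1}p^{-(k-3)\sigma_1}\leq C\sum_p p^{-3\sigma_1}<\infty$. This is uniform absolute convergence, and in particular $B_2$ is bounded on $\{\sigma\geq 1/2\}$ with the bound $\sum_p p^{-3/2}$. For $B_1$ the absolute series $\sum_pp^{-2\sigma}$ converges only for $\sigma>1/2$. Uniform absolute convergence therefore gives boundedness and analyticity on $\{\sigma\geq\sigma_1\}$ for every $\sigma_1>1/2$, and boundedness on the closed half-plane $\{\sigma\geq 1/2\}$ with the bound $\tfrac12\sum_pp^{-1}$ fails, since that sum diverges.

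\textbf{Convergence of $B_1$ down to $\sigma>1/3$ (main obstacle).} To reach $\sigma>1/3$, and in particular the line $\sigma=1/2$, I would treat $B_1$ like the main term. The variables $U_p^2$ are again i.i.d.\ uniform on the circle, so $B_1(s)=\tfrac12\sum_p U_p^2 p^{-2s}$ is a random series of the same type with $s$ replaced by $2s$. It therefore converges almost surely locally uniformly in $\{2\sigma>1/2\}=\{\sigma>1/4\}\supset\{\sigma>1/3\}$, by the Bergman-space martingale argument above. Boundedness on $\{\sigma\geq1/2\}$ then needs a supremum estimate over an unbounded vertical range. Compactness covers bounded parts of the half-plane. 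For large $|h|$, and for $\sigma$ large, where the series is bounded trivially by $\sum_p p^{-2\sigma}$ once $2\sigma>1$ with margin, I would use the following argument. By almost-periodicity and stationarity in $h$, the law of $B_1(\cdot+ih)$ does not depend on $h$. This alone does not bound the supremum over all $h$, so I would at least obtain boundedness on every vertical strip of bounded height, uniformly on $\sigma\geq1/2$. For $\sigma\geq 1$, say, the trivial bound applies, and the region $1/2\leq\sigma\leq1$ is handled by the local uniform convergence on compacts. If global boundedness in $h$ is genuinely required, this step is the delicate one, and I would first check whether the paper only uses boundedness on $[1/2,1]\times[0,1]$, which is all the integrals over $h\in[0,1]$ in Theorem~\ref{th:zeta} need.
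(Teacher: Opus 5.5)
Your proof is correct, and it gets the convergence statements by a different route from the paper. The paper uses only pointwise probabilistic input and lets the structure of Dirichlet series do the rest:
\begin{itemize}
\item Kolmogorov's theorem gives almost sure convergence of $\sum_p U_pp^{-s}$ at the countably many points $\sigma_k=1/2+1/k$, and of $B_1$ at $s=1/3$.
\item Jensen's lemma says an ordinary Dirichlet series that converges at $s_0$ converges uniformly on compact subsets of $\{\sigma>\Re s_0\}$. This upgrades the pointwise convergence to locally uniform convergence.
\item $B_2$ is handled by absolute convergence, as in your proposal.
\end{itemize}
You instead run an $L^2$-bounded martingale in the Bergman space $A^2(D)$ and pass to uniform convergence through the mean value property. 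Your argument is self-contained and never uses the Dirichlet-series structure. It works for any series $\sum_n \xi_n f_n$ of holomorphic functions with independent centred unimodular $\xi_n$ and $\sum_n\|f_n\|^2_{A^2(D)}<\infty$. The paper's route is shorter once Jensen's lemma is quoted. Both give the required regions, and yours even gives $B_1$ on $\{\sigma>1/4\}$. Your threshold $\sigma_1>1/3$ for absolute convergence of $B_2$ is the correct one, since the $k=3$ term needs $3\sigma>1$. The paper's ``$\sigma_0>1/4$'' at that point is a slip, harmless because the lemma only claims $\{\sigma>1/3\}$.

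As for the step you flagged: global boundedness of $B$ on $\{\sigma\ge 1/2\}$ is not just delicate, it is false for every realization.
\begin{itemize}
\item For fixed $\sigma>1/2$, the series $B_1(\sigma+ih)=\frac12\sum_p U_p^2p^{-2\sigma}e^{-2ih\log p}$ converges absolutely.
\item The numbers $\log p$ are linearly independent over $\mathbb{Q}$. So Kronecker's theorem gives, for any $N$, some $h$ with $U_p^2e^{-2ih\log p}$ arbitrarily close to $1$ for all $p\le N$.
\item Hence $\sup_h\Re B_1(\sigma+ih)=\frac12\sum_pp^{-2\sigma}$, which tends to $\infty$ as $\sigma\to1/2^+$. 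Meanwhile $B_2$ stays bounded.
\end{itemize}
So ``bounded'' in the lemma must be read as bounded on bounded parts of $\{\sigma\ge1/2\}$, such as $\{1/2\le\sigma\le1,\ |h|\le T\}$. That is exactly what your strip argument proves, and it is what the paper's proof actually yields (analyticity on $\{\sigma>1/3\}$ plus compactness). It is also all that is used later: the integrals over $h\in[0,1]$, and the bound on $B'$ over $\{1/2\le\sigma\le1,\ -1\le h\le1\}$ in the proof of Proposition~\ref{le:1}.
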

\begin{proof}We shall make use of Jensen's lemma from the classical theory of Dirichlet series (see \cite[Lemma 4.1.1]{QQ}), which implies that if an ordinary Dirichlet series $\sum_{n=1}^\infty b_nn^{-s}$ converges at the single point $s_0=\sigma_0+ih_0$, then it converges uniformly in $K$, where $K\subset\{\sigma>\sigma_0\}$ is an arbitrary compact subset.  To deduce the first statement concerning $\zeta_\rand(s)$ we simply note that the defining series converges almost surely at every point $\sigma_k:=1/2+1/k$, $k\geq 1$, by \cite[Theorem 5.17]{Kal}. In turn, we see that
$B$ is almost surely analytic in the domain $\sigma >1/3$. First of all, the series defining $B_1$ converges almost surely at $\sigma=1/3$ , and we may again invoke Jensen's lemma. Moreover,  the  double sum defining $B_2$ converges absolutely and uniformly in any half-plane $\{ \sigma\geq \sigma_0\}$ with $\sigma_0>1/4.$ 
\end{proof}

A useful conclusion from the above lemma is that in order to understand the behaviour of the integral means, we may restrict ourselves to considering the integral means of the quantity $\sum_{p \in \mathcal P}U_pp^{-s}$.

Another reduction we will make here is to note that instead of $\b\in \C$, {\it i.e.}, ~general  complex moments, it will be enough to consider only positive real moments. Namely, if $\b=|\b|e^{iu}$ we have
\begin{eqnarray*}
\Big|\exp (\b\sum_{p \in \mathcal P} U_pp^{-s})\Big|=\Big|\exp (|\b|\sum_{p \in \mathcal P} \widetilde U_pp^{-s})\Big|,
\end{eqnarray*}
where $\widetilde U_p:=e^{iu}U_p.$ It remains to use the fact that the infinite sequence $(\widetilde U_p)_{p\in \mathcal P}$ has the same distribution as $(U_p)_{p\in \mathcal P}$.

Given $\s>1/2$,  the process we are interested in is defined by
\be
\left(X^{\s}_h, h\in [0,1]\right)\quad\text{where}\quad X^{\s}_h=\sum_{p \in \mathcal P}\frac{\Re\left(U_pp^{-ih}\right)}{p^{\s}}.
\Eq(1.1)
\ee

The proof of our main result draws on the work of  \cite{ABH} and \cite{AT19}, where a random model of the zeta function \cite{Bag81,HLS,H13,Harper19} is investigated.
A key idea behind the strategy of the proof consists in introducing a so-called \emph{multiscale decomposition} of the process, namely, we rewrite $X^{\s}_h$ as 
\be
X^{\s}_h=\sum_{k=0}^{\infty}Y^{\s}_h(k)\quad\text{where}\quad Y^{\s}_h(k)=\sum_{2^{k-1}<\log p\leq 2^k}W^{\s}_h(p),
\quad k\in\N.
\Eq(1.5)
\ee
where we write
\be
W^{\s}_h(p):=\frac{\Re\left(U_pp^{-ih}\right)}{p^{\s}}, \quad h\in\R.
\Eq(1.6)
\ee
The law of the process $(W^{\s}_h(p), h\in\R)$ is both translation invariant and invariant under the reflexion $h\mapsto -h$. Using these properties, it easily follows from \eqv(1.6)  and the law of the random variables $U_p$ that
\be
\E\left[W^{\s}_h(p)W^{\s}_{h'}(p)\right]=\frac{1}{2p^{2\s}}\cos(|h-h'|\log p)\quad \text{for all}\,\, h,h'.
\Eq(1.7)
\ee
In the notation \eqv(1.6), the increments $Y^{\s}_h(k)$ defined in \eqv(1.5) read
\be
Y^{\s}_h(k)=\sum_{2^{k-1}<\log p\leq 2^k}W^{\s}_h(p), \quad h\in\R,
\Eq(1.8)
\ee
Thus, by \eqv(1.7) and the independence of the variables $U_p$,  $Y^{\s}_h(k)$ has variance 
\be
\varsigma_k^2 = \var(Y^{\s}_h(k))=\E\left[Y^{\s}_h(k)^2\right]=\sum_{2^{k-1}<\log p\leq 2^k}\frac{1}{2p^{2\s}},
\Eq(1.9)
\ee
whereas the covariance of $Y^{\s}_h(k)$ and $Y^{\s}_{h'}(k)$  is 
\be
\rho_{k}(h,h') =\E\left[Y^{\s}_h(k)Y^{\s}_{h'}(k)\right]=\sum_{2^{k-1}<\log p\leq 2^k}\frac{1}{2p^{2\s}}\cos(|h-h'|\log p).
\Eq(1.10)
\ee
A useful collective random variable will be 
\be
X^{\s}_{h}(k_1,k_2)
=\sum_{k=k_1+1}^{k_2}Y^{\s}_h(k),
\quad h\in\R,
\Eq(1.11)
\ee  
whose variance reads
\be
\varsigma_{k_1,k_2}^2 = \var(X^{\s}_{h}(k_1,k_2))=\sum_{k=k_1+1}^{k_2} \var(Y^{\s}_h(k))=\sum_{k=k_1+1}^{k_2}\varsigma_k^2=\sum_{2^{k_1}<\log p\leq 2^{k_2}}\frac{1}{2p^{2\s}},
\Eq(1.12)
\ee
where, with a slight redundancy of notation, $\varsigma_{k-1,k}\equiv\varsigma_k$. Of course, all moments of the process  $Y^{\s}_h(k)$ depend on $\s$. To avoid overloading the notation, we keep this dependence implicit. This should not cause any confusion.


\subsection{Moments of the random Riemann zeta function}
    \TH(S2.1)
Recall that we are interested in taking the limit of the ``free energy'' (see \eqv(1.theo1.2bis)) as $\s$ tends to $1/2$ from above.
Due to the multiscale decomposition \eqref{1.5} of the model, a key parameter is the integer $n$, defined by
\be
n\equiv n(\s):=\left\lceil\frac{\log \left[(2\s-1)^{-1}\right]}{\log 2}\right\rceil.
\Eq(1.4bis)
\ee
Note that 
\be\label{2ns} 
2^{n-1}< (2\s-1)^{-1}\leq 2^n,
\ee
and that without loss in generality we can take  $1/2<\s\leq 1$ and $n\in \mathbb N$. 
A key quantity for $\sigma > 1/2$ will be 
\be
v^2_k=
\frac{1}{2}\left\{{\rm E}_1\left[(2\s-1)2^{k-1}\right]-{\rm E}_1\left[(2\s-1)2^{k}\right]\right\},
\Eq(2.1.0')
\ee
where ${\rm E}_1$ is the Exponential Integral function
\be
{\rm E}_1(x)=\int_{x}^{\infty}\frac{e^{-t}}{t}dt.
\Eq(2.1.1')
\ee
The latter can be written for $x>0$ in terms of a converging series as,
\be
{\rm E}_1(x)=-\g -\ln x+\sum _{n=1}^{\infty}\frac{(-1)^{n+1}x^n}{n\, n!}, 
\Eq(2.1.2')
\ee
One also has the asymptotic expansion as $x\to +\infty$, 
\be
{\rm E}_1(x)=\frac{e^{-x}}{x}\sum_{n=0}^N \frac{n!}{(-x)^n} + \frac{N!}{(-x)^N}o(e^{-x}).
\Eq(2.1.3")
\ee
(See Section 5.1 in \cite{AS}.)

For $h,h'\in\R$, define
\be
h\curlyvee h':=\lceil\log_2|h-h'|^{-1}\rceil ,
\Eq(2.1.4')
\ee
so that 
\be \label{2.1.4'bis}
2^{h\curlyvee h' -1} < |h-h'|^{-1} \leq 2^{h\curlyvee h'}.
\ee
Recall definitions \eqv(1.9) and \eqv(1.10).

\begin{lemma}
 \TH(2.1.lem1)
 For $\s>1/2$ and $k\geq 1$, we have
\bea
\varsigma_k^2 \hspace{-6pt}&=&\hspace{-6pt} v^2_k+\OO\left(\sqrt{2^{k-1}}e^{-c\sqrt{2^{k-1}}}\right). 
\Eq(2.1.lem1.0)
\\
\rho_{k}(h,h')  \hspace{-6pt}&=&\hspace{-6pt}
\begin{cases}
\OO\left(2^{h\curlyvee h'-k}\right)+\OO\left(2^{h\curlyvee h'-n}\right)+\OO\left(\sqrt{2^{k-1}}e^{-c\sqrt{2^{k-1}}}\right)& \hspace{-2pt}\text{if $k> h\curlyvee h'$,} \quad\quad\,\,\\
v^2_k+\OO\left(2^{-2(h\curlyvee h'-k)}\right)+\OO\left(\sqrt{2^{k-1}}e^{-c\sqrt{2^{k-1}}}\right)&  \hspace{-2pt}\text{if $k\leq h\curlyvee h'$.}
\end{cases}
\Eq(2.1.lem1.1)
\eea
\end{lemma}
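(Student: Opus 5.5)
Throughout, the basic tool is the prime number theorem with the de la Vallée Poussin error term, $\pi(x)=\mathrm{Li}(x)+\OO\big(x e^{-c\sqrt{\log x}}\big)$. The plan is to convert every prime sum over the block $2^{k-1}<\log p\le 2^k$ into an integral against $dt/\log t$, then change variables to $u=\log t$. Since the summand in \eqv(1.9) is monotone, partial summation bounds the error by the boundary term at $x=e^{2^{k-1}}$. Using $p^{-2\s}\le p^{-1}$ there, this boundary term is of order $e^{-c\sqrt{2^{k-1}}}$. The integral term of the partial summation gives a similar contribution, up to a factor of order $\sqrt{2^{k-1}}$ coming from integrating $e^{-c\sqrt{u}}$ over the range $u\in[2^{k-1},2^k]$. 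This accounts for the uniform error $\OO(\sqrt{2^{k-1}}e^{-c\sqrt{2^{k-1}}})$ in both \eqv(2.1.lem1.0) and \eqv(2.1.lem1.1).

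For \eqv(2.1.lem1.0), the main term becomes
\[
\frac12\int_{e^{2^{k-1}}}^{e^{2^k}}\frac{t^{-2\s}}{\log t}\,dt=\frac12\int_{2^{k-1}}^{2^k}\frac{e^{-(2\s-1)u}}{u}\,du .
\]
The substitution $w=(2\s-1)u$ turns this into $v_k^2$ exactly, by the definition \eqv(2.1.0') of ${\rm E}_1$.

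For \eqv(2.1.lem1.1), apply the same procedure to the weight $p^{-2\s}\cos(|h-h'|\log p)$. Write $\delta=|h-h'|$. Partial summation now produces a variation term involving the derivative of $\cos(\delta\log t)$. This contributes an additional factor of $\delta u$ inside the error integral, which is harmless because $\delta\le 1$ effectively (large $\delta$ only helps), and it is absorbed into the same PNT error after adjusting $c$. The main term is
\[
\frac12\int_{2^{k-1}}^{2^k}\frac{e^{-(2\s-1)u}\cos(\delta u)}{u}\,du .
\]

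The case $k\le h\curlyvee h'$ is handled directly. By \eqref{2.1.4'bis} we have $\delta u\le 2^{k-(h\curlyvee h')+1}$ on the range of integration. Use $\cos x=1+\OO(x^2)$ to write the main term as $v_k^2$ plus an error bounded by
\[
\int_{2^{k-1}}^{2^k}\delta^2u\,du\lesssim (\delta 2^k)^2=\OO\big(2^{-2(h\curlyvee h'-k)}\big).
\]

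The case $k>h\curlyvee h'$ is where the oscillation must be exploited, and I expect it to be the main point. Integrate by parts with the antiderivative $\sin(\delta u)/\delta$. The boundary terms are bounded by $(\delta 2^{k-1})^{-1}\lesssim 2^{h\curlyvee h'-k}$. The remaining integral involves the derivative of $e^{-(2\s-1)u}/u$, namely $-\big((2\s-1)/u+1/u^2\big)e^{-(2\s-1)u}$. Multiplying by $1/\delta$ and integrating over the block, the $1/u^2$ part gives $\OO\big(1/(\delta 2^{k})\big)=\OO(2^{h\curlyvee h'-k})$. The $(2\s-1)/u$ part gives
\[
\OO\Big(\frac{(2\s-1)\log 2}{\delta}\Big)=\OO\big(2^{h\curlyvee h'-n}\big),
\]
using \eqref{2ns}, that is $2\s-1\le 2^{1-n}$, together with $\delta^{-1}\le 2^{h\curlyvee h'}$. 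This produces exactly the two stated error terms. The only care needed is to check that the PNT error for the oscillating weight stays uniform in $\delta$, which holds because the relevant $\delta$ are bounded.
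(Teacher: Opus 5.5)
Your proposal is correct and is essentially the paper's proof. The paper proves the block-range versions (Lemmas \ref{2.1.lem2} and \ref{2.1.lem3}) by the same steps: the PNT reduction to ${\rm E}_1$, a Taylor expansion of the cosine when $\Delta\log Q\le 1$, and an integration by parts in $u=\log x$ whose $1/u^2$ and $(2\sigma-1)/u$ pieces yield the $2^{h\curlyvee h'-k}$ and $2^{h\curlyvee h'-n}$ terms; it then sets $k_1=k-1$, $k_2=k$.

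One small correction: the partial-summation remainder for the weight $p^{-2\sigma}\cos(\delta\log p)$ is of order $(1+\delta)\sqrt{2^{k-1}}e^{-c\sqrt{2^{k-1}}}$, a factor $\delta$ rather than $\delta u$. So large $\delta$ does not ``help'' there, and the uniformity genuinely relies on $\delta\le 1$, i.e.\ $h,h'\in[0,1]$, which is the implicit setting of the lemma.
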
 

\begin{lemma}
  \TH(2.1.lem2)
For $0\leq k_1<k_2$, we have
\bea
&&\varsigma_{k_1,k_2}^2=\sum_{k=k_1+1}^{k_2}\varsigma_k^2=\sum_{k=k_1+1}^{k_2}v^2_k+\OO\left(\sqrt{2^{k_1}}e^{-c\sqrt{2^{k_1}}}\right), 
\Eq(2.1.lem1.1bis)
\\
&&\sum_{k=k_1+1}^{k_2}v^2_k=\frac{1}{2}\left\{{\rm E}_1\left[(2\s-1)2^{k_1}\right]-{\rm E}_1\left[(2\s-1)2^{k_2}\right]\right\} \Eq(2.1.lem1.3bis)
\\
&&\quad\quad\quad\quad =\frac{1}{2}\left(k_2\wedge n-k_1\wedge n\right)\log 2+\OO\left(2^{-(n-k_2\wedge n)}\right). 
\Eq(2.1.lem1.4bis)
\eea
\end{lemma}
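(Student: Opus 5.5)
The first identity \eqv(2.1.lem1.1bis) follows by summing \eqv(2.1.lem1.0) of Lemma \thv(2.1.lem1) over $k=k_1+1,\dots,k_2$. The error terms $\sqrt{2^{k-1}}e^{-c\sqrt{2^{k-1}}}$ decay superexponentially in $k$: the ratio of consecutive terms tends to $0$. Hence the sum over $k\geq k_1+1$ is bounded by a constant multiple of its first term $\sqrt{2^{k_1}}e^{-c\sqrt{2^{k_1}}}$, uniformly in $k_2$ and $\s$. If needed one lowers $c$ slightly to absorb the constant. The first equality in \eqv(2.1.lem1.1bis) is just \eqv(1.12).

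The second identity \eqv(2.1.lem1.3bis) is a telescoping sum. By \eqv(2.1.0'), $v_k^2=\frac12\{{\rm E}_1[(2\s-1)2^{k-1}]-{\rm E}_1[(2\s-1)2^{k}]\}$, so summing over $k=k_1+1,\dots,k_2$ leaves $\frac12\{{\rm E}_1[(2\s-1)2^{k_1}]-{\rm E}_1[(2\s-1)2^{k_2}]\}$.

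For \eqv(2.1.lem1.4bis), write $x_j:=(2\s-1)2^{j}$. By \eqref{2ns}, $x_j\in(2^{j-n},2^{j-n+1}]$, so $x_j\asymp 2^{-(n-j)}$ for $j\leq n$, while $x_j\geq 2^{j-n}$ grows geometrically for $j>n$. I would split into three cases.

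\emph{Case $k_2\leq n$.} Both arguments are at most $2$. The series \eqv(2.1.2') gives ${\rm E}_1(x)=-\g-\ln x+\OO(x)$ on bounded sets. Hence the difference equals $\frac12\ln(x_{k_2}/x_{k_1})+\OO(x_{k_1})+\OO(x_{k_2})=\frac12(k_2-k_1)\log2+\OO(2^{-(n-k_2)})$, using $x_{k_1}\leq x_{k_2}$.

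\emph{Case $k_1\geq n$.} Here $k_2\wedge n-k_1\wedge n=0$ and $2^{-(n-k_2\wedge n)}=1$. Both ${\rm E}_1$ values are bounded, since ${\rm E}_1$ is decreasing and $x_{k_1}\geq x_n>1$. So the difference is $\OO(1)$, which is what is claimed.

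\emph{Case $k_1<n<k_2$.} Split at $n$ as ${\rm E}_1(x_{k_1})-{\rm E}_1(x_n)+{\rm E}_1(x_n)-{\rm E}_1(x_{k_2})$. The first part is covered by the first case and gives $\frac12(n-k_1)\log2+\OO(1)$. The second part is $\OO(1)$ because ${\rm E}_1$ is bounded on $[1,\infty)$. Since the target error is $\OO(2^0)=\OO(1)$, this matches.

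The only step needing care is keeping the $\OO$ constants uniform in $\s\in(1/2,1]$, $k_1$ and $k_2$. This follows because all arguments of ${\rm E}_1$ are controlled through \eqref{2ns} alone, together with the monotonicity and boundedness of ${\rm E}_1$ on $[1,\infty)$ and the expansion \eqv(2.1.2') on $(0,2]$. Apart from that, the lemma is bookkeeping on top of Lemma \thv(2.1.lem1).
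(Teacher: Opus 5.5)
Your handling of \eqref{2.1.lem1.3bis} and \eqref{2.1.lem1.4bis} is correct and amounts to the paper's case split \eqref{2.1.F1bis}--\eqref{2.1.F1quater}. For \eqref{2.1.lem1.3bis} you telescope. For \eqref{2.1.lem1.4bis} you use the expansion \eqref{2.1.2'} of ${\rm E}_1$ when $k_2\le n$, and boundedness and monotonicity of ${\rm E}_1$ on $[1,\infty)$ otherwise. One harmless slip: \eqref{2ns} gives $x_j\in[2^{j-n},2^{j-n+1})$, so $x_n\ge 1$ rather than $x_n>1$.

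The gap is in \eqref{2.1.lem1.1bis}. You obtain it by summing \eqref{2.1.lem1.0}, but the paper does not prove Lemma~\ref{2.1.lem1} independently. Its proof is the one-line remark that it is the special case $k_1=k-1$, $k_2=k$ of Lemmas~\ref{2.1.lem2} and~\ref{2.1.lem3}. Your reduction is therefore circular. It also skips the only step where primes enter: the prime number theorem evaluation of $\sum_{P<p\le Q}p^{-2\sigma}$, with $\log P=2^{k_1}$ and $\log Q=2^{k_2}$.

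Here is what is missing. Write $\pi=\mathrm{Li}+\mathcal E$ with $\mathcal E(x)=\OO(xe^{-c\sqrt{\log x}})$, so that
\[
\sum_{P<p\le Q}p^{-2\sigma}=\int_P^Q\frac{x^{-2\sigma}}{\log x}\,dx+\int_P^Q x^{-2\sigma}\,d\mathcal E(x).
\]
The substitution $u=\log x$ turns the first integral exactly into ${\rm E}_1[(2\sigma-1)\log P]-{\rm E}_1[(2\sigma-1)\log Q]$.

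For the second integral, integrate by parts. The boundary terms are $\OO(P^{1-2\sigma}e^{-c\sqrt{\log P}})$. The remaining term is bounded by
\[
\int_P^Q|\mathcal E(x)|\,2\sigma x^{-2\sigma-1}\,dx\lesssim P^{1-2\sigma}\int_{\log P}^\infty e^{-c\sqrt u}\,du=\OO\bigl(P^{1-2\sigma}\sqrt{\log P}\,e^{-c\sqrt{\log P}}\bigr).
\]
Since $P^{1-2\sigma}\le 1$ for $\sigma>1/2$, this gives the error term in \eqref{2.1.lem1.1bis}, uniformly in $\sigma$ and in $k_2$.

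Done directly for general $(k_1,k_2)$, this needs no summation of block errors. Your superexponential-decay argument is correct but becomes unnecessary, and Lemma~\ref{2.1.lem1} then follows as a corollary, as in the paper.
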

\begin{lemma}
\TH(2.1.lem3)
For $0\leq k_1<k_2$ and $k_1 <n$, we have
\be
\begin{split}
&
\sum_{k=k_1+1}^{k_2}\rho_{k}(h,h')  
\\
& = \begin{cases}
\OO\left(2^{h\curlyvee h'-k_1}\right)+\OO\left((k_2\wedge n-k_1)2^{h\curlyvee h'-n}\right)+\OO\left(\sqrt{2^{k_1}}e^{-c\sqrt{2^{k_1}}}\right)& \text{if $k_1> h\curlyvee h'$,} \\
\sum_{k=k_1+1}^{k_2}v^2_k+\OO\left(2^{-2(h\curlyvee h'-k_2)}\right)+\OO\left(\sqrt{2^{k_1}}e^{-c\sqrt{2^{k_1}}}\right)&  \text{if $k_2\leq h\curlyvee h'$.}
\end{cases}
\end{split}
\Eq(2.1.lem1.2bis)
\ee
\end{lemma}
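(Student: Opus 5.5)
The plan is to sum the per-scale estimates of Lemma \ref{2.1.lem1} over $k=k_1+1,\dots,k_2$ and control the accumulated error terms by geometric series.

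\emph{Case $k_1>h\curlyvee h'$.} Every $k$ in the range satisfies $k>h\curlyvee h'$, so the first line of \eqv(2.1.lem1.1) applies to each summand. There are three error terms to sum.
\begin{itemize}
\item The terms $\OO(2^{h\curlyvee h'-k})$ form a geometric series dominated by its first term, giving $\OO(2^{h\curlyvee h'-k_1})$.
\item The stretched-exponential errors $\OO(\sqrt{2^{k-1}}e^{-c\sqrt{2^{k-1}}})$ decay super-geometrically in $k$. Their sum is therefore $\OO(\sqrt{2^{k_1}}e^{-c\sqrt{2^{k_1}}})$, possibly after decreasing $c$. Here $k-1\ge k_1$, and the ratio of consecutive terms tends to $0$.
\item The term $\OO(2^{h\curlyvee h'-n})$ needs care, because a naive sum over $k$ yields $(k_2-k_1)2^{h\curlyvee h'-n}$ rather than the claimed $(k_2\wedge n-k_1)$.
\end{itemize}
To handle the third term I will split the range at $n$. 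For $k\le n$ I use the lemma as stated, which contributes $(k_2\wedge n-k_1)$ terms of size $\OO(2^{h\curlyvee h'-n})$. For $k>n$ I do not use the lemma's form. Instead I bound $|\rho_k(h,h')|\le\varsigma_k^2$ via \eqv(1.10) and \eqv(1.9), and then apply \eqv(2.1.lem1.0): $v_k^2$ is exponentially small in $2^{k-n}$, since ${\rm E}_1(x)\le e^{-x}/x$ and $(2\s-1)2^{k-1}\ge 2^{k-n-1}$ by \eqv(2ns). Summing over $k>n$ gives $\OO(e^{-1/2})=\OO(1)$ plus stretched-exponential errors.

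This $\OO(1)$ tail must still be absorbed into the stated bounds. That is the delicate point: when $h\curlyvee h'$ is small, $2^{h\curlyvee h'-n}$ is small and cannot absorb an $\OO(1)$ constant. So I expect to need a sharper treatment of the $k>n$ tail. I would redo the Lemma \ref{2.1.lem1} computation for $k>n$ with the prime number theorem. This gives $\rho_k\approx\frac12\int_{2^{k-1}}^{2^k}e^{-(2\s-1)u}\cos(|h-h'|u)\,du/u$. Integrating by parts in the oscillatory factor yields a bound $\OO\big(e^{-(2\s-1)2^{k-1}}/(|h-h'|2^{k-1})\big)=\OO(2^{h\curlyvee h'-k}e^{-2^{k-n-1}})$. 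This sums over $k>n$ to at most $\OO(2^{h\curlyvee h'-k_1})$, which is already present in the bound. This oscillatory-integral tail estimate is the main obstacle. If it is awkward, I would fall back on the form the first line of \eqv(2.1.lem1.1) already has: its $\OO(2^{h\curlyvee h'-k})$ term comes from the oscillation, while the $\OO(2^{h\curlyvee h'-n})$ term comes from the damping $e^{-(2\s-1)u}$. For $k>n$ the damping derivative contributes only $\OO(2^{h\curlyvee h'-k}\cdot 2^{k-n}e^{-2^{k-n-1}})$, which is summable to $\OO(2^{h\curlyvee h'-n})$.

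\emph{Case $k_2\le h\curlyvee h'$.} Every $k$ in the range satisfies $k\le h\curlyvee h'$, so the second line of \eqv(2.1.lem1.1) applies. Summing gives $\sum v_k^2$ plus two errors.
\begin{itemize}
\item $\sum_{k\le k_2}\OO(2^{-2(h\curlyvee h'-k)})$ is geometric and dominated by its largest term $k=k_2$, giving $\OO(2^{-2(h\curlyvee h'-k_2)})$.
\item The stretched-exponential errors are summed exactly as in the first case.
\end{itemize}
The hypothesis $k_1<n$ is used only in the first case, to make $k_2\wedge n-k_1\ge 0$ meaningful.
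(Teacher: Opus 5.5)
Your plan is circular within the paper. Lemma~\ref{2.1.lem1} is not established independently there: its proof consists precisely in specializing Lemmas~\ref{2.1.lem2} and~\ref{2.1.lem3} to $k_1=k-1$, $k_2=k$. Summing its per-scale bounds over $k$ therefore only reduces the statement to its own one-block case. The actual analytic content is never supplied, namely the prime-number-theorem evaluation of $\sum_{P<p\le Q}\cos(\Delta\log p)\,p^{-2\sigma}$. You do redo it on the tail $k>n$, but nowhere else. Moreover, as you noticed, the stated per-block form is too lossy to be summed. Its uniform $\OO(2^{h\curlyvee h'-n})$ term produces $(k_2-k_1)$ rather than $(k_2\wedge n-k_1)$, and $k_2=\infty$ is allowed. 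To make your route work you would have to prove the block estimates for $k\le n$ by the same computation, at which point the summation is superfluous.

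The missing step is to carry out that computation once, on the whole range. Set $\log P=2^{k_1}$, $\log Q=2^{k_2}$ and $\Delta=|h-h'|\le 1$. Write $2\sum_{k=k_1+1}^{k_2}\rho_k(h,h')=I+R$, where $I=\int_P^Q\cos(\Delta\log x)\,x^{-2\sigma}\,dx/\log x$ and $R=\int_P^Q\cos(\Delta\log x)\,x^{-2\sigma}\,\mathcal E(dx)$. As in the proof of Lemma~\ref{2.1.lem2}, $R=\OO(\sqrt{\log P}\,e^{-c\sqrt{\log P}})$ uniformly in $Q$ and in $\Delta\le1$.

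If $k_2\le h\curlyvee h'$, then $\Delta\log Q<2$. The bound $|1-\cos y|\le y^2/2$ gives $I=2\sum_{k=k_1+1}^{k_2}v_k^2+\OO(\Delta^2\log^2Q)$, and the error is $\OO(2^{-2(h\curlyvee h'-k_2)})$.

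If $k_1>h\curlyvee h'$, then $\Delta\log P\ge2$. One integration by parts in $u=\log x$ against $\sin(\Delta u)/\Delta$ gives
\[
|I|\lesssim\frac{P^{-(2\sigma-1)}}{\Delta\log P}+\frac{2\sigma-1}{\Delta}\Big\{{\rm E}_1[(2\sigma-1)\log P]-{\rm E}_1[(2\sigma-1)\log Q]\Big\}.
\]
The first term is $\OO(2^{h\curlyvee h'-k_1})$. In the second, $(2\sigma-1)/\Delta<2\cdot 2^{h\curlyvee h'-n}$ by \eqref{2ns}, while the ${\rm E}_1$-difference equals $(k_2\wedge n-k_1)\log 2+\OO(1)$ by \eqref{2.1.lem1.4bis}.

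The factor $k_2\wedge n$ thus appears automatically, because the exponential integral saturates beyond scale $n$. No splitting at $n$ is needed, and $k_2=\infty$ is covered. This is also where $k_1<n$ really enters: it gives $k_2\wedge n-k_1\ge1$, so the $\OO(1)$ is absorbed into $\OO((k_2\wedge n-k_1)2^{h\curlyvee h'-n})$. Your oscillatory integration by parts on the tail is exactly this argument restricted to $k>n$.
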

\begin{proof}[Proof of Lemma \thv(2.1.lem1)]
Lemma \thv(2.1.lem1) is a particular case of Lemmata \thv(2.1.lem2) and \thv(2.1.lem3), obtained by setting $k_1=k-1$ and $k_2=k$.
\end{proof}

\begin{proof}[Proof of Lemma \thv(2.1.lem2)] To simplify the notation, set 
\be \label{PQ}
\log P=2^{k_1},\,\,\,\log Q=2^{k_2},
\ee
and consider the sum
\be
\varsigma^2_{k_1,k_2}=\sum_{k=k_1+1}^{k_2}\varsigma_k^2
=\frac{1}{2}S(P,Q)=\sum_{P< p\leq Q}\frac{1}{2p^{2\s}}.
\Eq(2.1.lem1.4)
\ee
Sums over primes can be estimated using the prime number theorem, which gives the density of primes up to very good error terms,
\bea
\pi(x)&=&\#\{p\leq x : p\, \mathrm{prime}\}=\mathrm{Li}(x)+{\mathcal E}(x),\\
\mathrm{Li}(x)&=&\int_2^x\frac{1}{\log y} dy,\\
{\mathcal E}(x)&=&\OO\left(xe^{-c\sqrt{\log x}}\right).
\eea
A sum over primes between $2< P\leq Q$ can thus be written as,
$$
\sum_{P<p\, \mathrm{prime} \leq Q}f(p)=\int_{(P,Q]}f(x)\pi(dx)=\int_{(P,Q]}\frac{f(x)}{\log x} dx+\int_{(P,Q]}f(x)d{\mathcal E}(x),
$$
with an error term which can be recast as
\be \label{Ef'}
\int_P^Qf(x)d{\mathcal E}(x)={\mathcal E}(Q)f(Q)-{\mathcal E}(P)f(P)-\int_P^Q{\mathcal E}(x)f'(x)dx.
\ee
The \emph{variance} sum \eqref{2.1.lem1.4} corresponds to $f(x)=x^{-2\sigma}$ with  ${\sigma}>1/2$,  and by using  the change of variable  $u=\log x$ we get 
$$
\int_P^Q\frac{x^{-2\sigma}}{\log x} dx=\int_{\log P}^{\log Q}e^{-(2\sigma -1)u}\frac{du}{u}=E_1[(2{\sigma}-1)\log P]-E_1[(2{\sigma}-1)\log Q],
$$
in terms of the exponential integral \eqref{2.1.1'}.
A detailed calculation of the remainder term \eqref{Ef'} shows that 
$$
\int_P^Qx^{-2\sigma}{\mathcal E}(dx)= \OO\left(\frac{\sqrt{\log P}}{P^{2{\sigma}-1}}e^{-c\sqrt{\log P}}\right),
$$ where the  leading term actually comes from the third one in Eq. \eqref{Ef'}, thus correcting Eq.~(16) in \cite{AT19}.   We finally  have
 \be
 S(P,Q)=\sum_{P<p\leq Q}\frac{1}{p^{2{\sigma}}}=E_1[(2{\sigma}-1)\log P]-E_1[(2{\sigma}-1)\log Q] +\OO\left(\frac{\sqrt{\log P} }{P^{2{\sigma}-1}}e^{-c\sqrt{\log P}}\right). 
 \Eq(2.1.F1)
 \ee 
 If $(2\sigma -1)\log Q \leq 1$, using \eqref{2.1.2'} then gives
 \be
 S(P,Q)=\log\log Q-\log\log P+ \OO\left((2\sigma -1)\log Q\right)+\OO\left(\frac{\sqrt{\log P} }{P^{2{\sigma}-1}}e^{-c\sqrt{\log P}}\right).
 \Eq(2.1.F1bis)
\ee
 If $(2\sigma -1)\log P \leq 1$ and $(2\sigma -1)\log Q>1$, using respectively \eqref{2.1.2'}  and \eqref{2.1.3"} yields
\be
 S(P,Q)=-\log[(2\sigma-1)\log P]+\OO(1)+ \OO\left(\frac{\sqrt{\log P} }{P^{2{\sigma}-1}}e^{-c\sqrt{\log P}}\right).
\Eq(2.1.F1ter)
\ee 
  If both $(2\sigma -1)\log P > 1$ and $(2\sigma -1)\log Q>1$, Eq. \eqref{2.1.3"} yields
\bea\nonumber
  S(P,Q)&=&\OO\left(\frac{1}{(2\sigma -1)\log P\,P^{2{\sigma}-1}}\right)+ \OO\left(\frac{\sqrt{\log P} }{P^{2{\sigma}-1}}e^{-c\sqrt{\log P}}\right)\\
  &=&\OO\left(1\right)+ \OO\left(\frac{\sqrt{\log P} }{P^{2{\sigma}-1}}e^{-c\sqrt{\log P}}\right).
\Eq(2.1.F1quater)
\eea 
Notice also that the remainder term can be simply bounded above for $\sigma> 1/2$ by
  \be\label{1/2}
  \OO\left(\frac{\sqrt{\log P} }{P^{2{\sigma}-1}}e^{-c\sqrt{\log P}}\right)< \OO\left(\sqrt{\log P}e^{-c\sqrt{\log P}}\right).
  \ee
  For $\log P=2^{k_1}, \log Q=2^{k_2}$, recalling \eqref{2ns}, results \eqref{2.1.F1} to \eqref{1/2} together yield results \eqref{2.1.lem1.1bis}, \eqref{2.1.lem1.3bis} and \eqref{2.1.lem1.4bis}. Note that \eqref{2.1.lem1.4bis} is obtained by using \eqv(1.4bis) to express $\log (2\sigma -1)^{-1}$  in \eqref{2.1.F1bis}, \eqref{2.1.F1ter} and \eqref{2.1.F1quater} in terms of $n \log2$. The $\OO(1)$ difference between these two quantities appears
only in the latter two cases, {\it i.e.},  when $k_2>n$ , and is taken into account in \eqref{2.1.lem1.4bis} by the $\OO\left(2^{-(n-k_2\wedge n)}\right)$ term there.  
 \end{proof}
 \begin{proof}[Proof of Lemma \thv(2.1.lem3)]
Consider now the \emph{covariance} sum with $\Delta:=|h-h'|$,
\be
\sum_{k=k_1+1}^{k_2}\rho_{k}(h,h') = 
\frac{1}{2}S(P,Q,\Delta):=\sum_{P< p\leq Q}\frac{1}{2p^{2\s}}\cos(\Delta\log p).
\Eq(2.1.lem1.5)
\ee
Let us write
\be
S(P,Q,\Delta)=I(P,Q,\Delta)+R(P,Q,\Delta),
\Eq(2.1.F2)
\ee
where the prime number theorem now yields for $f(x)=\cos(\Delta \log x){x^{-2\sigma}}$
\bea
I(P,Q,\Delta)&=&\int_P^Q \frac{\cos(\Delta \log x)}{x^{2\sigma}\log x}dx \Eq(2.1.F3)\\
R(P,Q,\Delta)&=&\int_P^Q \frac{\cos(\Delta \log x)}{x^{2\sigma}}{\mathcal E}(dx). \Eq(2.1.F4)
\eea
$\bullet$ For $\Delta \log Q\leq 1$, expanding the cosine term in \eqref{2.1.F3}  immediately yields
\be 
I(P,Q,\Delta)=E_1[(2{\sigma}-1)\log P]-E_1[(2{\sigma}-1)\log Q]+\OO\left(\Delta^2\log^2 Q\right).
\Eq(2.1.F5)
\ee
$\bullet$ For $\Delta \log  P\geq 1$, one can integrate  \eqref{2.1.F3} by parts with $u=\log x$ to get
$$
I(P,Q,\Delta)=\frac{\sin \Delta u}{\Delta u}e^{-(2\sigma -1)u}\Big{|}_{\log P}^{\log Q}+\frac{1}{\Delta}\int_{\log P}^{\log Q}\sin \Delta u \left[\frac{1}{u^2}+\frac{2\sigma -1}{u}\right]e^{-(2\sigma -1)u}d{u}.$$ This yields the orders of magnitude
\be
|I(P,Q,\Delta)|
\lesssim
\frac{1}{\Delta \log P}\frac{1}{P^{2{\sigma}-1}}+\frac{2\sigma -1}{\Delta}\left\{\mathrm{E}_1((2\sigma -1)\log P)-\mathrm{E}_1((2\sigma -1)\log Q)\right\}.
\Eq(2.1.F7)
\ee
Notice that the estimates of the exponential integral terms appearing in \eqref{2.1.F5} and \eqref{2.1.F7} can be performed exactly as the estimates of \eqref{2.1.F1}  in \eqref{2.1.F1bis}, and \eqref{2.1.F1ter} and  \eqref{2.1.F1quater} in the proof of Lemma  \thv(2.1.lem2).

\noindent $\bullet $ Finally, using $|f(x)|\leq x^{-2\sigma}$, the remainder $R$-term can be estimated exactly as above, independently of $\Delta$ and for all $Q$, as
\be \Eq(2.1.F9)
R(P,Q,\Delta)=\int_P^Q f(x){\mathcal E}(dx)= \OO\left(\frac{\sqrt{\log P} }{P^{2{\sigma}-1}}e^{-c\sqrt{\log P}}\right).
\ee

For $P,Q$ as in \eqref{PQ}, and recalling the notation \eqref{2.1.4'}, the estimates \eqref{2.1.F5} and \eqref{2.1.F7}, together with \eqref{2.1.lem1.3bis}, \eqref{2.1.lem1.4bis}, \eqref{2.1.F9} and \eqref{1/2}, respectively yield the $k_2 \leq h\curlyvee h'$ and $k_1 > h\curlyvee h'$ cases of \eqref{2.1.lem1.2bis} in Lemma \thv(2.1.lem3).
\end{proof}
We recall that $\sigma$ and $n$ in the lemma below are related via \eqref{1.4bis}.
\begin{lemma}\label{rk1}For all $m> 0$, and for $k_2\leq n$, 
\be
\sum_{2^{k_1}<\log p\leq 2^{k_2}}\frac{(\log{p})^m}{p^{2\s}}\leq \frac{1}{m}2^{mk_2} \left[1+\OO(2^{k_2-n})\right]+\OO\left(2^{k_1(m+1/2)}e^{-c\sqrt{2^{k_1}}}\right),
\Eq(2.1.F10bis)
\ee
whereas for $k_2>n$
\bea
\sum_{2^{k_1}<\log p <2^{k_2}}\frac{(\log{p})^m}{p^{2\s}}
\leq 2^{mn}\Gamma(m)+\OO\left(2^{k_1(m+1/2)}e^{-c\sqrt{2^{k_1}}}\right).
\Eq(2.1.F11bis)
\eea
\end{lemma}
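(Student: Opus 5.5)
The approach is the same as in the proof of Lemma \thv(2.1.lem2). With $\log P=2^{k_1}$ and $\log Q=2^{k_2}$, the sum is $\int_{(P,Q]} f(x)\,\pi(dx)$ for $f(x)=(\log x)^m x^{-2\s}$. The prime number theorem splits it into a main integral $\int_P^Q f(x)\,dx/\log x$ and a remainder $\int_P^Q f(x)\,d\mathcal E(x)$, which is handled through \eqref{Ef'}.

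\emph{Main term.} The substitution $u=\log x$ turns the main integral into
\[
\int_{2^{k_1}}^{2^{k_2}} u^{m-1} e^{-(2\s-1)u}\,du .
\]
For $k_2>n$ we extend the range to $(0,\infty)$ and get the bound $(2\s-1)^{-m}\Gamma(m)\le 2^{mn}\Gamma(m)$, using \eqref{2ns}; this is the main term of \eqref{2.1.F11bis}. For $k_2\le n$ we instead use $e^{-(2\s-1)u}\le 1$, which gives at most $\int_0^{2^{k_2}}u^{m-1}\,du=\frac1m 2^{mk_2}$. This already yields the main term of \eqref{2.1.F10bis}, and the factor $[1+\OO(2^{k_2-n})]$ is harmless slack. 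If one wants it to reflect the actual asymptotics, expand $e^{-(2\s-1)u}=1+\OO((2\s-1)u)$ with $(2\s-1)u\le (2\s-1)2^{k_2}\le 2^{k_2-n+1}$.

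\emph{Remainder.} This is the step that needs care. By \eqref{Ef'} it equals $\mathcal E(Q)f(Q)-\mathcal E(P)f(P)-\int_P^Q\mathcal E(x)f'(x)\,dx$, with $|\mathcal E(x)|\le Cx e^{-c\sqrt{\log x}}$. We compute
\[
f'(x)=x^{-2\s-1}(\log x)^{m-1}\bigl(m-2\s\log x\bigr),
\]
so that $|\mathcal E(x)f'(x)|\lesssim x^{-2\s}(\log x)^m e^{-c\sqrt{\log x}}$ for $\log x\ge m$. In the variable $u$ the integral is then bounded by
\[
\int_{\log P}^{\infty} u^m e^{-(2\s-1)u} e^{-c\sqrt u}\,du\le \int_{\log P}^\infty u^m e^{-c\sqrt u}\,du .
\]
The substitution $u=v^2$ and repeated integration by parts show that this tail is $\OO\bigl((\log P)^{m+1/2}e^{-c\sqrt{\log P}}\bigr)$, after absorbing polynomial factors by a slight reduction of $c$ if needed. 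With $\log P=2^{k_1}$ this is $\OO(2^{k_1(m+1/2)}e^{-c\sqrt{2^{k_1}}})$. The boundary terms are handled in the same way: $\mathcal E(P)f(P)$ is $\OO((\log P)^m e^{-c\sqrt{\log P}})$, and $\mathcal E(Q)f(Q)$ is smaller because $Q\ge P$ and $t\mapsto t^m e^{-c\sqrt t}$ is eventually decreasing.

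The one genuine subtlety is that these estimates must hold uniformly in $\s>1/2$. This is fine because $x^{1-2\s}\le 1$ was the only place $\s$ entered the bound, exactly as in \eqref{1/2}. Adding the main-term and remainder bounds gives both \eqref{2.1.F10bis} and \eqref{2.1.F11bis}.
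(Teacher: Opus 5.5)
Your proof is correct and follows the paper's argument. You use the same prime-number-theorem split with $f(x)=(\log x)^m x^{-2\sigma}$, the same substitution $u=\log x$, and the same bound $(2\sigma-1)^{-m}\Gamma(m)\le 2^{mn}\Gamma(m)$ for $k_2>n$. The differences are cosmetic: for $k_2\le n$ you drop $e^{-(2\sigma-1)u}\le 1$ instead of expanding the incomplete gamma function $\gamma(m,\cdot)$ in its series, and you carry out the integration-by-parts estimate of the remainder via \eqref{Ef'}, which the paper only asserts.
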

 
\begin{proof}
Using the prime number theorem for $f(x)=(\log x)^m x^{-2\sigma}$ yields
\bea\nonumber
\sum_{P<p\leq Q}\frac{(\log{p})^m}{p^{2\s}}&=&I_m + R_m\\ \nonumber
I_m&=&\int_P^Q dx \frac{(\log x)^{m-1}}{x^{2\sigma}}\\ \nonumber
&=&(2\sigma-1)^{-m}\left[\gamma(m, (2\sigma-1) \log Q)-\gamma(m, (2\sigma-1) \log P)\right] \\ \nonumber
R_m
&=&\OO\left(\frac{(\log P)^{m+1/2}}{P^{2\s-1}}e^{-c\sqrt{\log P}}\right).
\eea
 Here $\gamma(s,x)$ is the incomplete gamma-function
\be \nonumber
\gamma(s,x)=\int_0^x e^{-t} t^{s-1} dt=x^s\sum_{k=0}^{\infty} \frac{(-x)^k}{(s+k)k!},
\ee
such that $\gamma(s,\infty)=\Gamma(s)$.

When $(2\sigma -1)\log Q \leq 1$, we have
 \be
\sum_{P<p\leq Q}\frac{(\log{p})^m}{p^{2\s}}= \frac{1}{m}(\log Q)^m \left\{1+\OO[(2\sigma -1)\log Q]\right\}+\OO\left((\log P)^{m+1/2}e^{-c\sqrt{\log P}}\right),
\Eq(2.1.F10)
\ee 
 in agreement with Remark 1 in \cite{ABH}.  
 When $(2\sigma -1)\log Q > 1$, we simply observe that 
\bea
\sum_{P<p \leq Q}\frac{(\log{p})^m}{p^{2\s}}
\leq (2\sigma-1)^{-m}\Gamma(m)+\OO\left((\log P)^{m+1/2}e^{-c\sqrt{\log P}}\right).
\Eq(2.1.F11ter)
\eea
Specifying the above for $\log P=2^{k_1}, \log Q=2^{k_2}$, \eqref{2.1.F10} thus yields the $k_2\leq n$ case \eqref{2.1.F10bis}, 
whereas \eqref{2.1.F11ter} yields the $k_2>n$ case \eqref{2.1.F11bis}. 
\end{proof}
\begin{lemma}\label{rk1bis}For all $m> 0$, and all $q>1$
\be
\sum_{2^{k_1}<\log p\leq 2^{k_2}}\frac{(\log{p})^m}{p^{2q\s}} < (q-1)^{-m}\Gamma(m)+\OO\left(e^{-(q-1)2^{k_1}}2^{k_1(m+1/2)}e^{-c\sqrt{2^{k_1}}}\right).
\Eq(2.1.F10quater)
\ee
\end{lemma}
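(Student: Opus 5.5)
We follow the proof of Lemma \ref{rk1}, i.e. the prime number theorem applied to the test function
$$
f(x)=(\log x)^m x^{-2q\s}.
$$
Set $\log P=2^{k_1}$ and $\log Q=2^{k_2}$, and split the sum into a main term and a remainder term, as in that proof:
$$
\sum_{P<p\leq Q}\frac{(\log p)^m}{p^{2q\s}}=\int_P^Q\frac{(\log x)^{m-1}}{x^{2q\s}}\,dx+\int_P^Q f(x)\,{\mathcal E}(dx)=:I_m^{(q)}+R_m^{(q)}.
$$

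\textbf{Main term.} Substitute $u=\log x$:
$$
I_m^{(q)}=\int_{\log P}^{\log Q}u^{m-1}e^{-(2q\s-1)u}\,du.
$$
Since $\s>1/2$ and $q>1$, we have $2q\s-1>q-1>0$. Replacing the exponent by $q-1$ and extending the range to $(0,\infty)$ gives the strict bound
$$
I_m^{(q)}<\int_0^\infty u^{m-1}e^{-(q-1)u}\,du=(q-1)^{-m}\Gamma(m).
$$
This bound is uniform in $\s$, which is the point of the lemma: the factor $p^{-2(q-1)\s}\le p^{-(q-1)}$ supplies an exponential cutoff that does not degenerate as $\s\to1/2$. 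This is unlike \eqv(2.1.F11bis), where the constant grows like $2^{mn}$.

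\textbf{Remainder term.} Integrate by parts exactly as in \eqref{Ef'}:
$$
R_m^{(q)}={\mathcal E}(Q)f(Q)-{\mathcal E}(P)f(P)-\int_P^Q{\mathcal E}(x)f'(x)\,dx,
$$
with ${\mathcal E}(x)=\OO(xe^{-c\sqrt{\log x}})$. The derivative is
$$
f'(x)=x^{-2q\s-1}(\log x)^{m-1}\left[m-2q\s\log x\right],
$$
so $|{\mathcal E}(x)f'(x)|\lesssim (\log x)^m x^{-(2q\s-1)}e^{-c\sqrt{\log x}}$. In the variable $u$ the integral becomes
$$
\int_{\log P}^{\infty}u^m e^{u}e^{-2q\s u}e^{-c\sqrt u}\,du\le\int_{\log P}^{\infty}u^m e^{-(q-1)u}e^{-c\sqrt u}\,du.
$$
The integrand is dominated near its lower endpoint, which gives a bound of order $(\log P)^{m+1/2}e^{-(q-1)\log P}e^{-c\sqrt{\log P}}$. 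To see this, either mimic the computation that produced the $\sqrt{\log P}$ factor in Lemma \ref{2.1.lem2} (where the extra $\sqrt{u}$ comes from integrating $e^{-c\sqrt u}$), or integrate termwise and absorb the polynomial losses into a smaller $c$. The boundary terms are of the same or smaller order. Substituting $\log P=2^{k_1}$ then gives the error term in \eqv(2.1.F10quater), with $P^{-(q-1)}=e^{-(q-1)2^{k_1}}$ (or better, $P^{-(2q\s-1)}$).

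\textbf{Main obstacle.} The only delicate point is the remainder. We must keep the factor $e^{-(q-1)\log P}$ uniformly in $\s$ while the range of integration may be infinite. We also need the constant $c$ in $e^{-c\sqrt{2^{k_1}}}$ to be allowed to change from line to line, as it implicitly does throughout the paper. If the crude tail bound loses a power of $\log P$, we would adjust $c$ to absorb it.
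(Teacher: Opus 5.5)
Your proposal is correct and is essentially the paper's proof. The paper uses the same prime-number-theorem split into $I_{m,q}+R_{m,q}$, bounds the main term by $\int_0^\infty u^{m-1}e^{-(q-1)u}\,du=(q-1)^{-m}\Gamma(m)$ using $2q\sigma-1>q-1$, and bounds the remainder as in Lemma \ref{2.1.lem2}, with $P^{-(2q\sigma-1)}<P^{-(q-1)}$; your choice of pulling $e^{-(q-1)u}\le e^{-(q-1)\log P}$ out of the tail integral is the right way to get the $(\log P)^{m+1/2}$ factor. One small slip: the pointwise bound should read $|\mathcal E(x)f'(x)|\lesssim(\log x)^m x^{-2q\sigma}e^{-c\sqrt{\log x}}$, which is what your $u$-integral actually uses.
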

 
\begin{proof}
Using the prime number theorem for $f(x)=(\log x)^m x^{-2 q\s}$ yields
\bea\nonumber
\sum_{P<p\leq Q}\frac{(\log{p})^m}{p^{2q\s}}&=&I_{m,q} + R_{m,q}\\ \nonumber
I_{m,q}&=&\int_P^Q dx \frac{(\log x)^{m-1}}{x^{2q\s}}\\ \nonumber
&=&(2q\sigma-1)^{-m}\left[\gamma(m, (2q\sigma-1) \log Q)-\gamma(m, (2q\sigma-1) \log P)\right] \\ \nonumber
R_{m,q}
&=&\OO\left(\frac{(\log P)^{m+1/2}}{P^{2q\s-1}}e^{-c\sqrt{\log P}}\right).
\eea
Using $\gamma(s,x)<\gamma(s,\infty)= \Gamma(s)$, and $\s>1/2$, we obviously have 
\bea\nonumber
I_{m,q}&<& (q-1)^{-m}\Gamma(m)\\ \nonumber
R_{m,q}
&<&\OO\left(\frac{(\log P)^{m+1/2}}{P^{q-1}}e^{-c\sqrt{\log P}}\right).
\eea
Specifying the above for $\log P=2^{k_1}, \log Q=2^{k_2}$ yields \eqref{2.1.F10quater}, 
\end{proof}
\begin{lemma}\label{rk2}
For all  $m > 1$ 
and $\sigma \geq \frac{1}{2}$, 
\be 
\sum_{2^{k_1}< \log p\leq 2^{k_2}}\frac{1}{(p^{2\s})^m}\leq \frac{1}{(m-1)2^{k_1}}e^{-(m-1)2^{k_1}} (1+o(1)),
\Eq(2.1.5'bis)
\ee
where the small o is with respect to $k_1\to\infty$.
\end{lemma}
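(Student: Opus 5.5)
Since $\sigma\geq 1/2$ and $m>1$, every term satisfies $p^{-2\s m}\leq p^{-m}$. So I will drop $\sigma$ at once and bound $\sum_{P<p\leq Q} p^{-m}$, with $\log P=2^{k_1}$ and $\log Q=2^{k_2}$ as in \eqref{PQ}. Enlarging $Q$ to infinity only increases the sum, so it suffices to bound the tail $\sum_{p>P}p^{-m}$. The plan follows the proofs of Lemmas \ref{2.1.lem2} and \ref{rk1}: I write the prime sum as a Stieltjes integral against $\pi(dx)=dx/\log x+\mathcal E(dx)$, with $f(x)=x^{-m}$.

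The main term is
$$\int_P^\infty \frac{x^{-m}}{\log x}\,dx=\int_{\log P}^\infty e^{-(m-1)u}\frac{du}{u}={\rm E}_1\big((m-1)\log P\big),$$
after the change of variable $u=\log x$. By the asymptotic expansion \eqref{2.1.3"} with $N=0$, this equals
$$\frac{e^{-(m-1)2^{k_1}}}{(m-1)2^{k_1}}\,(1+o(1))\qquad (k_1\to\infty),$$
which is exactly the right-hand side of \eqref{2.1.5'bis}. If one wants a clean inequality, one can use the elementary bound ${\rm E}_1(x)\leq e^{-x}/x$ instead.

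For the remainder I use the integration by parts \eqref{Ef'} with $Q\to\infty$. The boundary term at infinity vanishes because $m>1$, so the remainder is
$$-\mathcal E(P)P^{-m}+m\int_P^\infty \mathcal E(x)x^{-m-1}dx .$$
With $|\mathcal E(x)|\lesssim x e^{-c\sqrt{\log x}}$, both pieces are at most of order $e^{-c\sqrt{\log P}}\int_{\log P}^\infty e^{-(m-1)u}du$, up to factors of $m$. This gives
$$\OO\!\left(\frac{e^{-(m-1)2^{k_1}}}{m-1}\,e^{-c\sqrt{2^{k_1}}}\right).$$
Compared with the main term, the ratio is $\OO(2^{k_1}e^{-c\sqrt{2^{k_1}}})=o(1)$, so the remainder is absorbed into the factor $1+o(1)$.

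The only point needing care is this comparison. The prime number theorem error must beat the polynomial factor $2^{k_1}=\log P$ coming from the main term, and it does, since $e^{-c\sqrt{\log P}}$ decays faster than any power of $\log P$. Here I use $e^{-c\sqrt{\log x}}\leq e^{-c\sqrt{\log P}}$ for $x\geq P$. The $o(1)$ may depend on $m$, as long as $m>1$ is fixed, which matches the statement. Combining the main term and the remainder yields \eqref{2.1.5'bis}.
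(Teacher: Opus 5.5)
Your proof is correct and follows essentially the paper's route: the prime number theorem via the Stieltjes integral, extending the range to $Q=\infty$, and the asymptotics ${\rm E}_1(x)\sim e^{-x}/x$, with the PNT remainder absorbed into the $1+o(1)$ factor. The only difference is cosmetic: you use $p^{-2\sigma m}\le p^{-m}$ before applying the prime number theorem, whereas the paper applies it to $p^{-2m\sigma}$ and then uses that ${\rm E}_1$ is decreasing together with $2m\sigma-1\ge m-1$.
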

\begin{proof}
Using the prime number theorem gives
\be \nonumber
\sum_{P< p\leq Q}\frac{1}{(p^{2\s})^m}={\rm E}_1((2m\s-1)\log P)-{\rm E}_1((2m\s-1)\log Q) + \OO\left(\frac{1}{P^{2m\s-1}}e^{-c\sqrt{\log P}}\right).
\ee
We thus have 
\bea \nonumber
\sum_{P< p\leq Q}\frac{1}{(p^{2\s})^m} &\leq & \sum_{P< p < \infty}\frac{1}{(p^{2\s})^m} \Eq(2.1.F12)\\ \nonumber
&=&{\rm E}_1((2m\s-1)\log P) + \OO\left(\frac{1}{P^{2m\s-1}}e^{-c\sqrt{\log P}}\right)\\ \nonumber
&\leq& {\rm E}_1((m-1)\log P) +\OO\left(\frac{1}{P^{m-1}}e^{-c\sqrt{\log P}}\right)\\ \nonumber
&=&\frac{1}{(m-1)\log P}\frac{1}{P^{m-1}}(1+o(1))+\OO\left(\frac{1}{P^{m-1}}e^{-c\sqrt{\log P}}\right), 
\Eq(2.1.F13)
\eea
where we used \eqv(2.1.3"). 
Specifying this result for $\log P=2^{k_1}, \log Q=2^{k_2}$ gives  \eqref{2.1.5'bis}.
\end{proof} 


\subsection{Key probability estimates}
    \TH(S2.2)
The properties of the fine dyadic decomposition $Y^{\s}_h$  of the process $X^{\s}_{h}$ obtained in Section \thv(S2.1)  are now used to obtain estimates on the law of $X^{\s}_h$ on mesoscopic scales. More precisely, given $0\leq \a_1<\a_2\leq \infty$ (independent of $n$), we set 
\be
k_1\equiv k_1(\a_1):=\lceil\a_1 n \rceil, \quad k_2\equiv k_2(\a_2):=\lceil\a_2 n \rceil,
\Eq(2.2.1)
\ee
and consider
\be
X^{\s}_{h}(k_1,k_2)
=\sum_{i=k_1+1}^{k_2}Y^{\s}_h(i),
\quad h\in\R.
\Eq(2.2.2)
\ee      
A key parameter governing tail probabilities at mesoscopic scales is
\be
\bar\a_2:=\a_2\wedge 1.
\Eq(2.2.1')
\ee

To formulate our results, we use the following notations and definitions. 
Let $(\s_n, n\in\N)$ be the sequence with the general term
\be
\s_n=\frac{1}{2}+2^{-(n+1)},\quad n\geq 0.
\Eq(3.lem1.01)
\ee
This sequence is decreasing and, in view of \eqv(1.4bis) and  \eqv(2ns),  satisfies  for all $n\geq 0$
\be
n\left(\s_n\right)=n,
\Eq(3.lem1.02)
\ee
and for all $n\geq 1$
\be
\left\{\s \mid n\left(\s\right)=n\right\}=[\s_{n},\s_{n-1}).
\Eq(3.lem1.03)
\ee
\smallskip

We first state all the results we need in a series of lemmas, and the proofs are given afterwards.
The first result expresses some key features of the process $X^{\s}_{h}(k_1,k_2)$, namely, Gaussianity and branching.
Recall definition \eqv(2.2.1).

\begin{lemma}
    \TH(2.2.lem1)
Let  $0< \a_1<1$, $\a_1<\a_2\leq \infty$, and $\g>0$.
For all $n\in\N$ large enough, the following holds for all $\s\in[\s_n, \s_{n-1})$

\item{(i)} For all $h\in[0,1]$ 
\be
c_1
\leq \left[\sfrac{1}{\sqrt{\g^2 n\log 2}}2^{-\frac{n\g^2}{\bar\a_2-\a_1}}\right]^{-1}\P\left(X^{\s}_{h}(k_1,k_2)\geq \g n\log 2\right)
\leq 
c_2
\Eq(2.2.lem1.1)
\ee
where $0<c_1<c_2<\infty$ are constants that depend only on $\a_1$, $\bar\a_2$ and $\g$.

\item{(ii)} For all $h,h'\in[0,1]$ such that $h\curlyvee h'<k_1$, 
\be
\begin{split}
&
\P\left(
X^{\s}_{h}(k_1,k_2)\geq \g n\log 2,
X^{\s}_{h'}(k_1,k_2)\geq \g n\log 2
\right)
\\
= 
& 
(1+\OO(1))
\left[\P\left(X^{\s}_{h}(k_1,k_2)\geq \g n\log 2\right)\right]^2.
\end{split}
\Eq(2.2.lem1.2)
\ee
Moreover, if there exists $\e>0$ such that $h\curlyvee h'-k_1\leq -\e n$, then the $\OO(1)$ term in \eqv(2.2.lem1.2) 
can be refined to $\OO\left(n2^{-\e n}\right)$.

\item{(iii)} For all $h,h'\in[0,1]$ such that $h\curlyvee h'\geq k_2$
\be
\begin{split}
&
\P\left(
X^{\s}_{h}(k_1,k_2)\geq \g n\log 2,
X^{\s}_{h'}(k_1,k_2)\geq \g n\log 2
\right)
\\
\leq 
& 
\P\left(X^{\s}_{h}(k_1,k_2)\geq \g n\log 2\right).
\end{split}
\Eq(2.2.lem1.3)
\ee
\end{lemma}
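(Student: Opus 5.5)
The plan is to treat $X^{\s}_{h}(k_1,k_2)$ as a sum of many independent, bounded, non-identically distributed variables $W^{\s}_h(p)$, $2^{k_1}<\log p\leq 2^{k_2}$, and to get Gaussian tail asymptotics at the moderate-deviation level $\g n\log 2$ by an exponential change of measure. The variance is read off from Lemma \thv(2.1.lem2): by \eqv(2.1.lem1.4bis), $\varsigma^2_{k_1,k_2}=\frac12(k_2\wedge n-k_1)\log 2+\OO(1)=\frac12(\bar\a_2-\a_1)n\log 2+\OO(1)$. The Gaussian heuristic $\exp(-x^2/(2\varsigma^2))$ with $x=\g n\log 2$ gives exactly $2^{-n\g^2/(\bar\a_2-\a_1)}$, up to constants from the $\OO(1)$ in the variance and the ceilings in $k_1,k_2$. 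The prefactor $1/\sqrt{\g^2 n}$ is the usual Mills ratio $\varsigma/x\asymp n^{-1/2}$.

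\textbf{Part (i).} I would compute the log-moment generating function exactly. For $\l$ real,
\[
\log\E\, e^{\l W^{\s}_h(p)}=\log I_0(\l p^{-\s})=\frac{\l^2}{4p^{2\s}}+\OO\!\left(\frac{\l^4}{p^{4\s}}\right).
\]
Take the tilt $\l=x/\varsigma^2_{k_1,k_2}\asymp 1$, which is bounded since $\g$ and $\bar\a_2-\a_1$ are fixed. The quartic error then sums, by Lemma \ref{rk2} with $m=2$, to $\OO(2^{-k_1}e^{-2^{k_1}})$, which is negligible because $k_1\geq\a_1 n$ with $\a_1>0$. Under the tilted measure the sum has mean $x+o(1)$ and variance $\varsigma^2+o(1)$. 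The third absolute moments satisfy $\sum_p p^{-3\s}$, which is again tiny. A Berry--Esseen bound under the tilted law, or a local CLT, therefore gives
\[
\P(X\geq x)=e^{-\l x+\Lambda(\l)}\,\E_\l\!\left[e^{-\l(X-x)}\1_{X\geq x}\right]\asymp e^{-x^2/(2\varsigma^2)}\frac{1}{\l\varsigma}.
\]
This is the claimed two-sided bound. For $\a_2>1$, the primes with $\log p>2^n$ contribute only $\OO(1)$ variance, which explains the $\bar\a_2$. Uniformity in $h$ comes from translation invariance, and uniformity in $\s\in[\s_n,\s_{n-1})$ from the uniform error terms in Lemma \thv(2.1.lem2).

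\textbf{Part (ii).} Apply the same tilting to the two-dimensional vector $(X_h,X_{h'})$ with tilt $(\l,\l)$. The key input is Lemma \thv(2.1.lem3) in the case $k_1>h\curlyvee h'$: the covariance is $\OO(2^{h\curlyvee h'-k_1})+\OO(n2^{h\curlyvee h'-n})+o(1)$, which is $\OO(1)$ in general and $\OO(n2^{-\e n})$ when $h\curlyvee h'-k_1\leq-\e n$. The joint cumulant generating function equals that of two independent copies plus $\l^2\,\mathrm{cov}$. The joint probability is therefore the product of the marginals times $e^{\OO(\mathrm{cov})}$, with a two-dimensional local CLT handling the prefactor. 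Since the correlation coefficient is $\OO(1/n)$, the Mills-ratio factors are essentially unchanged. This yields the factor $1+\OO(1)$, understood as bounded above and below, refined to $1+\OO(n2^{-\e n})$ when the covariance is that small.

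\textbf{Part (iii)} is trivial: the joint event is contained in the single event $\{X^{\s}_h(k_1,k_2)\geq\g n\log2\}$.

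The main obstacle is (ii), specifically justifying the bivariate local limit and tilting uniformly when the correlation is only $\OO(1)$ rather than small. I would first try exact cumulant expansions with the cross term $\frac{\l^2}{2}\cos(\Delta\log p)p^{-2\s}$, combined with a bivariate Berry--Esseen estimate for the tilted independent summands.
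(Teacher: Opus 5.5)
Your proposal is correct, but it takes a heavier route than the paper.

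\textbf{The paper's route.} The paper never tilts. It applies the Berry--Esseen bound of Theorem~\ref{2.2.theo1} directly to the untilted sum of the $W^{\s}_h(p)$. This works because of a fact you mention only in passing: by Lemma~\ref{rk2} with $m=3/2$, the third-moment sum is of order $2^{-k_1}e^{-2^{k_1-1}}$ with $k_1\geq \a_1 n$, so it is doubly exponentially small in $n$. The absolute error $6\vartheta_{k_1,k_2}/\varsigma^3_{k_1,k_2}$ is then negligible even relative to the target probability $n^{-1/2}2^{-n\g^2/(\bar\a_2-\a_1)}$. Explicit Mills-ratio bounds on the Gaussian tail then finish (i).

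For (ii) the paper uses the bivariate Berry--Esseen bound (Theorem~\ref{2.2.theo2}); the smallest eigenvalue is $s_{11}-|s_{12}|=\varsigma^2_{k_1,k_2}(1-\OO(1/n))$. It then bounds the Gaussian orthant integral by hand:
\begin{itemize}
\item the inequality $2x_1x_2\leq x_1^2+x_2^2$ reduces it to $[1-b^2]^{-1/2}\overline{\II}^2$, where $\overline{\II}$ has variance inflated by $1+b$ and $b=s_{12}/s_{11}=\OO(n^{-1}2^{h\curlyvee h'-k_1})$;
\item $\overline{\II}$ is compared with $\II$ by truncating at a level $K$ with $bK^2$ bounded.
\end{itemize}
This gives only the upper bound $(1+r)[\P(\cdot)]^2$, which is all the later Paley--Zygmund argument uses.

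\textbf{What your route buys.} Your tilt $\l=x/\varsigma^2_{k_1,k_2}$, together with the cumulant identity (joint log-moment generating function equals that of two independent copies plus $\l^2\mathrm{cov}$), gives two things. It explains transparently where the factor $e^{\OO(\mathrm{cov})}$ comes from, and it yields genuinely two-sided bounds in (ii). It would also give sharp asymptotics in regimes where the summands are not negligibly small (e.g.\ $\a_1=0$), where the absolute Berry--Esseen error would swamp the tail.

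\textbf{What it costs.} You must track the tilted means, which are shifted by $\l\,\mathrm{cov}$, and run a bivariate Berry--Esseen argument under the tilted law. That step is not the obstacle you anticipate: the correlation coefficient is $\OO(1/n)$, and the tilted third moments are still doubly exponentially small.
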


\begin{remark} The dichotomy between part (ii) and part (iii) of Lemma \thv(2.2.lem1) reflects the fact that the event under consideration  occurs, respectively, after and before the \emph{branching point} of the random walks associated with the processes $X^{\s}_h(k)$ and $X^{\s}_{h'}(k)$ (obtained by viewing the partial sums $X^{\s}_h(k)=\sum_{i=0}^kY^{\s}_h(i)$ and $X^{\s}_{h'}(k)=\sum_{i=0}^kY^{\s}_{h'}(i)$ as random walks). 
Before branching, the two walks are almost the same, whereas after branching they become almost independent. We refer to Section 1.4 in \cite{ABH} for a detailed discussion of this construction and phenomenology.
\end{remark}

In addition to Lemma \thv(2.2.lem1), which only holds  for $\a_1>0$, we need the following estimates which hold for $\a_1\geq 0$ and provide uniform control over suitable intervals of the parameters $h$ and (or) $\s$.
Again, recall \eqv(2.2.1).

\newpage

\begin{lemma}
    \TH(2.2.lem2)
\item{(i)}
For all $0\leq \a_1<1$, $\a_1<\a_2\leq \infty$, $\g>0$ and $h\in[0,1]$
\be
\P\left(
\max_{\s:\in[\s_n, \s_{n-1})}\max_{h':|h-h'|\leq 2^{-\lceil\bar\a_2 n \rceil}}X^{\s}_{h'}(k_1,k_2)\geq 
\g n\log 2
\right)
\leq C 2^{-\frac{n\g^2}{\bar\a_2-\a_1}},
\Eq(2.2.lem2.1)
\ee
for some constant $0<C<\infty$ depending on $\a_1$, $\bar\a_2$ and $\g$.
\item{(ii)}
For all $\g>0$,  $\a_2>0$ and $h\in[0,1]$ 
\be
\P\left(
\max_{\s:\in[\s_n, \s_{n-1})}X^{\s}_{h}(0,k_2)\geq \g n\log 2
\right)
\leq C' 2^{-\frac{n\g^2}{\bar\a_2}},
\Eq(2.2.lem2.1')
\ee
for some constant $0<C'<\infty$ depending only on $\g$.
\end{lemma}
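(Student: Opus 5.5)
We prove Lemma \thv(2.2.lem2) by a chaining (Kolmogorov/Dudley-type) argument combined with sub-Gaussian tail bounds for sums of independent bounded variables. The key point is that $X^{\s}_{h}(k_1,k_2)$ is a sum of independent, bounded, centred variables $W^{\s}_h(p)$. Hence for every real $\l$,
\[
\log\E\exp\left(\l X^{\s}_h(k_1,k_2)\right)\leq \frac{\l^2}{2}\varsigma^2_{k_1,k_2}\left(1+\OO(\l^2 2^{-k_1})\right),
\]
because $\log \E e^{\l W}\leq \frac{\l^2}{2}\E W^2+\OO(\l^4 p^{-4\s})$ and Lemma \ref{rk2} controls $\sum p^{-4\s}$. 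This is a sum $\sum_p \log I_0(\l p^{-\s})$. By Lemma \thv(2.1.lem2), $\varsigma^2_{k_1,k_2}=\frac12(\bar\a_2-\a_1)n\log 2+\OO(1)$, uniformly in $\s\in[\s_n,\s_{n-1})$. Choosing $\l=2\g/(\bar\a_2-\a_1)$, which is bounded in $n$, Chebyshev gives the single-point bound
\[
\P\left(X^{\s}_h(k_1,k_2)\geq\g n\log 2\right)\leq C2^{-n\g^2/(\bar\a_2-\a_1)}.
\]
When $\a_1=0$ the small primes are not Gaussian, but they are handled by the same exponential moment bound: the finitely many primes with $\log p\le 1$ contribute an $\OO(1)$ factor to $\E e^{\l X}$, since $\l$ is fixed. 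This already gives part (ii) at a single $\s$.

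Next we pass from a point to the maximum over the box $B=[\s_n,\s_{n-1})\times\{|h'-h|\le 2^{-\lceil\bar\a_2n\rceil}\}$. The plan is to control the fluctuation of the field over $B$ through its derivatives in $\s$ and $h$. We have
\[
\partial_h X^{\s}_{h'}=\sum \frac{\Im(U_p p^{-ih'})\log p}{p^{\s}}, \qquad \partial_\s X^{\s}_{h'}=-\sum\frac{\Re(U_pp^{-ih'})\log p}{p^\s}.
\]
By Lemma \ref{rk1} with $m=2$, their variances are $\OO(2^{2(k_2\wedge n)})$. The box has $h$-width $2^{-\bar\a_2 n}\le 2^{-k_2\wedge n}$ (up to constants) and $\s$-width $2^{-n-1}\le 2^{-k_2\wedge n}$. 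So on $B$ the increments of the field have $\OO(1)$ sub-Gaussian size.

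Concretely, we use a standard dyadic chaining in both variables. On a grid of mesh $2^{-j}\cdot 2^{-k_2\wedge n}$, the increments between neighbouring grid points have exponential moments $\E e^{\l(X_a-X_b)}\le e^{C\l^2 2^{-2j}}$. This follows from the same independent-sum bound together with the covariance estimates of Lemma \thv(2.1.lem3) and Lemma \ref{rk1}. The Lipschitz factor $\log p\, p^{-\s}$ times the distance contributes at most $2^{k_2\wedge n}\cdot 2^{-j-k_2\wedge n}$ in $L^2$. For the tail $\log p>2^{n}$ in the $\s$-direction we instead use the factor $e^{-(2\s-1)\log p}$ together with the bound \eqv(2.1.F11bis).

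Summing over scales $j$ gives
\[
\E\exp\left(\l\sup_B X\right)\le C\,\E\exp\left(\l X_{(\s_n,h)}\right).
\]
Equivalently, the supremum exceeds the centre value by $t$ with probability at most $Ce^{-ct^2}$. A union bound then finishes the argument: we split according to whether the centre value exceeds $\g n\log 2-t$ and choose $t$ of order $\sqrt{\log n}$, or simply use the exponential-moment form directly, which avoids any loss. This yields \eqv(2.2.lem2.1) and \eqv(2.2.lem2.1').

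The main obstacle is the $\s$-direction when $k_2=\infty$. The derivative $\partial_\s$ sums $\log p/p^\s$ over all large primes. Lemma \ref{rk1} handles this only through the cutoff at scale $2^n$ in the $\s$ variable: within $[\s_n,\s_{n-1})$ we have $2\s-1\ge 2^{-n}$, so the weights decay exponentially beyond $\log p\sim 2^n$. This is exactly what makes the mixed variance $\OO(2^{2n})$ rather than infinite, and I would check this step carefully, including uniformity in $\s$ for the constant in the exponential-moment bound.
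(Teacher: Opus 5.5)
\textbf{There is a genuine gap.} Your single-point sub-Gaussian bound, the increment estimates, and the dyadic grid in $(\s,h)$ at mesh $2^{-j-(k_2\wedge n)}$ all match the paper's ingredients. The argument breaks where you combine them: going from your chaining to $\E\exp(\l\sup_B X)\le C\,\E\exp(\l X_c)$, with $X_c$ the centre value.

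\textbf{Why the step fails.} Your increment bounds $\E e^{\l(X_a-X_b)}\le e^{C\l^2 2^{-2j}}$ hold under $\P$. They therefore control $D:=\sup_B(X-X_c)$ only marginally. But the event $\{\sup_B X\ge \g n\log 2\}$ lives where $X_c$ is of order $n$, and $D$ and $X_c$ are functions of the same $U_p$. Marginal information cannot give $\E[e^{\l X_c}e^{\l D}]\le C\,\E e^{\l X_c}$. H\"older with exponents $(1+\d,(1+\d)/\d)$ loses a factor $\exp(\d\l^2\varsigma^2_{k_1,k_2}/2)=2^{c\d n}$.

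The union-bound fallback also fails:
\begin{itemize}
\item with $t\asymp\sqrt{\log n}$, the term $Ce^{-ct^2}$ is only $n^{-c}$, far larger than $2^{-n\g^2/(\bar\a_2-\a_1)}$;
\item taking $t\asymp\sqrt n$ to make that term small enough makes the centre term lose $e^{\l t}=e^{c\sqrt n}$.
\end{itemize}
Neither route gives a constant $C$ independent of $n$.

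\textbf{What is missing, and a fix.} You need control of the increments \emph{jointly} with a large centre value. Tilting does this. Let $d\mathbb Q\propto e^{\l X_c}\,d\P$ with $\l=x/\varsigma^2_{k_1,k_2}$, which is bounded. Write $X_a=\sum_p\Re(U_pp^{-s_a})$ with $s_a=\s_a+ih_a$. The $U_p$ stay independent under $\mathbb Q$. Since $0\le\frac{d}{dx}\log I_0(\sqrt x)\le\frac14$, for level-$j$ neighbours you get
\[ \log\E_{\mathbb Q}e^{\nu(X_a-X_b)}\le\frac{\l|\nu|}{2}\sum_p p^{-\s_c}|p^{-s_a}-p^{-s_b}|+\frac{\nu^2}{4}\sum_p|p^{-s_a}-p^{-s_b}|^2\lesssim\l|\nu|2^{-j}+\nu^22^{-2j}, \]
using Lemma~\ref{rk1} with $m=1,2$. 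The linear term is the drift created by the tilt; it is summable in $j$. Chaining under $\mathbb Q$ then gives $\E_{\mathbb Q}e^{\l D}\le C$, which is exactly your claimed inequality. Chebyshev then yields $Ce^{-x^2/(2\varsigma^2_{k_1,k_2})}$.

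The paper implements the same idea differently. Lemma~\ref{2.2.lem3} bounds the joint Laplace transform $\E\exp(\l_1X_c+\l_2(X^{\s'}_{h'}-X^{\s}_h))$ with $\l_1$ bounded; the term linear in $\l_2$ in \eqref{2.2.lem3.4bis} is this drift. Lemma~\ref{2.2.prop2}(i) then peels over the level sets $\BB_m$ of $X_c$. The factor $e^{C(m+1)}$, which comes from $x/\varsigma^2_{k_1,k_2}\le C$, is beaten by the increment tail $e^{-c'm^{3/2}}$.

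\textbf{Minor point.} The correction in your first display should be additive, not a factor $1+\OO(\l^22^{-k_1})$. For $k_1=0$ (part (ii)) the multiplicative form would cost $e^{\OO(n)}$. In fact $I_0(z)\le e^{z^2/4}$ gives $\log\E e^{\l X}\le\frac{\l^2}{2}\varsigma^2_{k_1,k_2}$ exactly.
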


The rest of this section is devoted to proving Lemma \thv(2.2.lem1) and Lemma \thv(2.2.lem2). The proof of Lemma \thv(2.2.lem1) is based on the following classical and multivariate versions of the Berry-Esseen theorem.

\begin{theorem}[Theorem 2, Section XVI.5 of \cite{Fe2}]
    \TH(2.2.theo1)
Let $\{Z_i, i\geq 1\}$ be independent random variables on $(\R, \BB(\R), {\mathbb P})$ such that ${\mathbb E} Z_i=0$.
Put 
\be
S_m=\sum_{i=1}^mZ_i,\quad
\S^2_m=\sum_{i=1}^m{\mathbb E} Z_i^2,\quad
\vartheta_m=\sum_{i=1}^m \E\left|Z_i^3\right|.
\Eq(2.2.theo1.1)
\ee
Then, for all  $m$
\be
\sup_{-\infty<a<\infty}\left|
{\mathbb P}\bigl(
{\textstyle S_m}\geq  a
\bigr)
-
\frac{1}{\sqrt{2\pi}}\int_{a/\S_m}^{\infty}e^{-\frac{x^2}{2}}dx
\right|
\leq 6\frac{\vartheta_m}{\S^3_m}.
\Eq(2.2.theo1.2)
\ee
\end{theorem}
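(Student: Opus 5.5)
The statement is the non-identically distributed Berry--Esseen bound, and I would prove it the classical way: Esseen's smoothing inequality together with a comparison of characteristic functions. Normalize by setting $T_m=S_m/\S_m$, let $F_m$ be its distribution function and $\Phi$ the standard normal one, and write $L_m:=\vartheta_m/\S_m^3$ for the Lyapunov ratio. Since the two sides of \eqv(2.2.theo1.2) differ only in whether $\{S_m\ge a\}$ or $\{S_m>a\}$ is used, and $\Phi$ is continuous, it suffices to bound $\sup_x|F_m(x)-\Phi(x)|$. We may also assume $L_m<1/6$, since otherwise the right-hand side exceeds $1$ and the bound is trivial.

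\textbf{Smoothing inequality.} For any $T>0$, Esseen's lemma gives
$$
\sup_x|F_m(x)-\Phi(x)|\le \frac1\pi\int_{-T}^{T}\frac{|\phi_m(t)-e^{-t^2/2}|}{|t|}\,dt+\frac{24}{\pi\sqrt{2\pi}\,T},
$$
where $\phi_m(t)=\prod_i \phi_{Z_i}(t/\S_m)$ and $24/(\pi\sqrt{2\pi})$ uses $\sup\Phi'=1/\sqrt{2\pi}$. I would prove this lemma by convolving $F_m-\Phi$ with the Fej\'er-type density whose Fourier transform is supported in $[-T,T]$. One then locates a point where $|F_m-\Phi|$ is nearly maximal and uses the monotonicity of $F_m$ together with the Lipschitz bound on $\Phi$ to show that smoothing loses at most a fixed fraction of that maximum. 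Fourier inversion bounds the smoothed difference by the integral term. I will take $T=\frac{4}{3L_m}$.

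\textbf{Characteristic function estimate.} For each $i$ put $s_i^2=\E Z_i^2$ and $\gamma_i=\E|Z_i|^3$. Taylor expansion to third order gives
$$
\bigl|\phi_{Z_i}(u)-1+\tfrac12 s_i^2u^2\bigr|\le\tfrac16\gamma_iu^3,
$$
and Lyapunov's inequality gives $s_i^3\le\gamma_i$, so every individual term is small once $|t|\le T$. Comparing the products factor by factor, via $|\prod a_i-\prod b_i|\le\sum_i|a_i-b_i|\prod_{j\neq i}\max(|a_j|,|b_j|)$, I aim for
$$
|\phi_m(t)-e^{-t^2/2}|\le \tfrac{16}{9}L_m|t|^3e^{-t^2/3}\qquad\text{for }|t|\le \tfrac{4}{3L_m}.
$$
For $|t|\le L_m^{-1/3}$ this follows from the logarithm expansion of each factor. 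In the intermediate range the simple product comparison is too lossy, so there I would use symmetrization instead. Taking $Z_i'$ an independent copy of $Z_i$, the identity $|\phi_{Z_i}(u)|^2=\E e^{iu(Z_i-Z_i')}$ gives $|\phi_{Z_i}(u)|^2\le\exp(-s_i^2u^2+\tfrac43\gamma_i|u|^3)$, hence $|\phi_m(t)|\le e^{-t^2/3}$ for $|t|\le T$. That bound then controls $|\phi_m-e^{-t^2/2}|$ crudely but sufficiently.

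\textbf{Conclusion and main obstacle.} Substituting into the smoothing integral gives the first term at most
$$
\frac{1}{\pi}\cdot\frac{16}{9}L_m\int_{\R} t^2e^{-t^2/3}\,dt,
$$
which is a numerical multiple of $L_m$, and the second term is $\frac{18}{\pi\sqrt{2\pi}}L_m$. Adding these should give a constant below $6$; if the first pass overshoots, I would adjust the choice of $T$ and the cutoff between the two $t$-ranges to bring it under. The main obstacle is the uniform characteristic-function bound on the whole range $|t|\le T$ with explicit constants, in particular the symmetrization step for moderate $|t|$ where the naive product comparison fails. The smoothing lemma is standard, and the final constant bookkeeping is routine once that bound is in hand.
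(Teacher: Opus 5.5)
The paper gives no proof of this statement; it quotes it from Feller, whose argument follows the smoothing-inequality-plus-characteristic-function scheme you outline. The gap is in your central estimate, which is not justified on the range where you need it.

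Your symmetrization bound $|\phi_{Z_i}(u)|^2\le\exp(-s_i^2u^2+\frac43\gamma_i|u|^3)$ multiplies out, after normalizing by $\Sigma_m$, to $|\phi_m(t)|^2\le\exp(-t^2+\frac43L_m|t|^3)$. This gives $|\phi_m(t)|\le e^{-t^2/3}$ only for $|t|\le\frac{1}{4L_m}$. Its right-hand side exceeds $1$ as soon as $|t|\ge\frac{3}{4L_m}$, which lies well inside your range $|t|\le T=\frac{4}{3L_m}$. So the claim $|\phi_m(t)|\le e^{-t^2/3}$ for $|t|\le T$, and with it the bound $\frac{16}{9}L_m|t|^3e^{-t^2/3}$, is unsupported on $\frac{1}{4L_m}<|t|\le\frac{4}{3L_m}$. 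On $\frac{3}{4L_m}\le|t|\le\frac{4}{3L_m}$ your plan gives no control of $|\phi_m|$ at all.

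The choice $T=\frac{4}{3L_m}$, and the remark that every individual factor is close to $1$ for $|t|\le T$, are i.i.d.\ intuitions. In the i.i.d.\ case $s|t|/\Sigma_m\le\frac43<\sqrt2$, so the Taylor bound controls each factor. Without identical distributions, Lyapunov only gives $s_i/\Sigma_m\le L_m^{1/3}$, so $s_i|t|/\Sigma_m$ can be as large as $\frac43L_m^{-2/3}>4$.

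Retuning $T$ does not fix this. The smoothing remainder $\frac{24}{\pi\sqrt{2\pi}\,T}\approx 3.05/T$ is below $6L_m$ only when $T\gtrsim 0.51/L_m$; at $T=\frac{1}{4L_m}$ it alone is about $12.2L_m$. For $T$ between $0.51/L_m$ and $\frac{3}{4L_m}$, symmetrization yields only $|\phi_m(t)|\le e^{-\alpha t^2}$ with $\alpha=\frac12(1-\frac43L_mT)\le 0.16$. Feeding that into your crude outer-range bound $|\phi_m|+e^{-t^2/2}\le 2L_m|t|^3e^{-\alpha t^2}$ pushes the total well past $6L_m$ when $L_m$ is near $\frac16$.

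What is missing is a single-factor estimate that stays effective when $s_i|u|$ is of order one or larger. One example is $|\phi_{Z_i}(u)|\le\exp(-\frac12 s_i^2u^2+\frac13\gamma_i|u|^3)$ for all real $u$. It follows from the Taylor bound when $s_i|u|\le\sqrt2$, from $|\phi_{Z_i}|\le 1$ together with $\gamma_i\ge s_i^3$ when $s_i|u|\ge\frac32$, and from a direct check in between. It gives $|\phi_m(t)|\le e^{-5t^2/18}$ on $|t|\le\frac{2}{3L_m}$. Some input of this kind is needed, with $T$ chosen accordingly and a more careful comparison than the crude one on the outer range. As it stands, the step you yourself flag as the main obstacle is exactly where the proposal fails.
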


\begin{theorem}
[Corollary 17.2 in \cite{BR10}] 
    \TH(2.2.theo2)
Let $\{\boldsymbol{Z_i} , i\geq 1\}$ be a sequence of independent random vectors on $(\R^d, \BB(\R^d), {\P})$ with mean zero
and covariance matrix $\Cov(\boldsymbol{Z_i})$. Put
\be
\boldsymbol{S_m}=\sum_{i=1}^m\boldsymbol{Z_i},\quad
\boldsymbol{\S_m}=\sum_{i=1}^m\Cov(\boldsymbol{Z_i}),\quad
\boldsymbol{\vartheta}_m=\sum_{i=1}^m{\E}\| \boldsymbol{Z_i}\|^3,
\Eq(2.2.theo2.1)
\ee
where $\|\cdot\|$ is the Euclidean norm in $\R^d$. Denote by $\l_m$ be the smallest eigenvalue of $\boldsymbol{\S_m}$. There exists an absolute constant $c>0$ depending only on $d$ such that
\be
\sup_{A\in\AA}\left|
{\P}\bigl(\boldsymbol{S_m}\in A\bigr)
-\frac{1}{\sqrt{(2\pi)^d\det \boldsymbol{\S_m}}}
\int_Ad^d\mathbf{x}
e^{-\frac{1}{2}\left(\mathbf{x},{\boldsymbol{\S^{-1}_m}}\mathbf{x}\right)}
\right|
\leq c\frac{\boldsymbol{\vartheta_m}}{\l_m^{3/2}},
\Eq(2.2.theo2.2)
\ee
where $\AA$ is the collection of Borel measurable convex subsets of $\R^d$.
\end{theorem}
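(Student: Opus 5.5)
This is a classical result which the paper quotes from \cite{BR10}; if I had to prove it, I would follow the Lindeberg-replacement and smoothing approach of Bentkus rather than the Fourier-analytic route of \cite{BR10}.

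\emph{Normalization.} The first step reduces to identity covariance. Set $T:=\boldsymbol{\S_m}^{-1/2}$ and $\boldsymbol{Z_i}':=T\boldsymbol{Z_i}$. Linear images of convex sets are convex, so the supremum over $\AA$ is unchanged. The Gaussian on the right of \eqv(2.2.theo2.2) becomes the standard Gaussian $\Phi_d$. Since $\|T\|_{op}=\l_m^{-1/2}$, we have $\E\|\boldsymbol{Z_i}'\|^3\le \l_m^{-3/2}\E\|\boldsymbol{Z_i}\|^3$. It therefore suffices to prove
\[
\sup_{A\in\AA}\bigl|\P(\boldsymbol{S_m}\in A)-\Phi_d(A)\bigr|\le c\,\boldsymbol{\vartheta}_m
\qquad\text{when } \boldsymbol{\S_m}=I_d .
\]
We may also assume $\boldsymbol{\vartheta}_m\le 1$, since otherwise the bound is trivial.

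\emph{Smoothing and Lindeberg swapping.} Fix $A\in\AA$ and $\e>0$. Let $A^{\e}$ and $A^{-\e}$ denote the outer and inner $\e$-parallel sets of $A$; both are again convex. Choose a smooth function $\varphi_\e$ with $\1_{A}\le\varphi_\e\le \1_{A^{\e}}$ and $\|D^j\varphi_\e\|\lesssim \e^{-j}$ for $j\le 3$. A standard way to get this is to mollify a function that is linear in the distance to $A$.

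The key geometric input is the Gaussian isoperimetric bound for convex sets: $\Phi_d(A^{\e}\setminus A^{-\e})\le C_d\,\e$. This holds uniformly over convex $A$, with $C_d\lesssim d^{1/4}$ by Ball and Nazarov, though any $d$-dependent constant suffices here. It converts the smoothing error into a term of order $\e$.

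Next, let $\boldsymbol{G_i}$ be independent Gaussian vectors with $\Cov(\boldsymbol{G_i})=\Cov(\boldsymbol{Z_i})$, and replace the $\boldsymbol{Z_i}$ by the $\boldsymbol{G_i}$ one at a time. Taylor-expanding $\varphi_\e$ to third order, the first- and second-order terms cancel because the moments match. Using $\E\|\boldsymbol{G_i}\|^3\lesssim \E\|\boldsymbol{Z_i}\|^3$, this naive swap bounds the difference by $\e^{-3}\boldsymbol{\vartheta}_m$. Optimizing in $\e$ then gives only a rate of order $\boldsymbol{\vartheta}_m^{1/4}$, which is not enough.

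\emph{The main obstacle: reaching the linear rate.} The hard step is getting $\boldsymbol{\vartheta}_m$ itself instead of a fractional power of it. I would use Bentkus' induction on the quantity
\[
\delta_m:=\sup_{\text{admissible families}}\ \frac{\sup_{A\in\AA}\bigl|\P(\boldsymbol{S_m}\in A)-\Phi_d(A)\bigr|}{\boldsymbol{\vartheta}_m}.
\]
In the $i$-th swap, the third derivative of $\varphi_\e$ is not bounded crudely by $\e^{-3}$. Instead, it is integrated against the law of the remaining hybrid sum $\sum_{j<i}\boldsymbol{G_j}+\sum_{j>i}\boldsymbol{Z_j}$. Since $D^3\varphi_\e$ is supported on the boundary layer $A^{\e}\setminus A$, that law is compared with a Gaussian using the induction hypothesis, applied to the convex sets $A^{\pm\e}$. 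This gives
\[
\E\bigl|D^3\varphi_\e(\cdot)\bigr|\lesssim \e^{-3}\bigl(\e+\delta\,\boldsymbol{\vartheta}\bigr)
\]
for the relevant partial quantities.

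Two technical points arise here. First, the remaining covariance $\sum_{j\neq i}\Cov(\boldsymbol{Z_j})$ can be far from $I_d$ when $i$ is such that $\E\|\boldsymbol{Z_i}\|^3$ is large. I would handle this by truncation, and by treating separately the indices whose third moments are of order $1$. Second, the weights $1-\sum_{j\le i}\E\|\boldsymbol{Z_j}\|^2$ produce a harmonic-type sum that is only logarithmic. This is controlled by choosing the smoothing radius $\e_i$ to depend on the residual variance.

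Summing over $i$ should yield a recursive inequality of the form $\delta\le C_1+C_2\,\delta/\,\text{(large)}$, and this closes to give $\delta\le c(d)$. Carrying out these inductive bookkeeping estimates is where I expect essentially all the difficulty to lie.
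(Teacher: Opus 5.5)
\paragraph{The approach is viable, but the deferred step fails as written.}
The paper gives no proof of this statement; it quotes Bhattacharya--Rao, whose argument is Fourier-analytic. Your normalization step is correct, and a Lindeberg--induction proof in the spirit of Bentkus is a legitimate alternative route (it even yields $c\lesssim d^{1/4}$). However, the step you defer as bookkeeping is exactly where the sketch, as written, breaks.

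Summing your per-swap bound $\E|D^3\varphi_\e|\lesssim \e^{-3}(\e+\delta\vartheta)$ against the weights $\E\|\boldsymbol{Z_i}\|^3$ gives $\vartheta\e^{-2}+\delta\vartheta^2\e^{-3}$. The first term forces $\e\gtrsim 1$ and the second forces $\e\gtrsim\vartheta^{1/3}$, while the smoothing error $C\e$ forces $\e\lesssim\vartheta$. So no recursion $\delta\le C_1+C_2\delta/(\text{large})$ comes out.

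A step-dependent radius does not rescue this. The telescoping uses one fixed test function, so the only step dependence available is the automatic $\tau_i^2=\e^2+s_i$, where $s_i$ is the residual Gaussian variance. With the absolute value inside the expectation, the Gaussian part of the boundary-layer term is of order $\tau_i^{-2}$. Then $\sum_i\E\|\boldsymbol{Z_i}\|^3\tau_i^{-2}\asymp\vartheta\log(1/\vartheta)$, already for i.i.d.\ summands.

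\paragraph{The missing idea is cancellation.}
You should estimate the tensor $\E D^3\varphi(U_i+x)$ with the expectation inside, not $\E|D^3\varphi|$.

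\emph{Test function.} Take $\varphi$ to be a mollified indicator of a convex set, e.g.\ $\varphi=\1_{A^{\e}}*\chi_\e$. Then $D^3\varphi=\int \1_{A^{\e}+z}\,D^3\chi_\e(z)\,dz$ is a superposition of indicators of translated convex sets, so the induction hypothesis can be applied to signed measure differences.

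\emph{Main term.} Split the law of the \emph{non-Gaussian} part of $U_i$ into its Gaussian approximation plus a signed remainder. The Gaussian piece merges with the residual Gaussian into a Gaussian of covariance $\ge\frac12 I_d$. Integrating by parts onto that density gives an $O(1)$ contribution, uniformly in $\e$ and $i$. This is where the term $C\vartheta$ comes from, with no logarithm.

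\emph{Remainder.} Only the signed remainder uses the induction hypothesis, at cost of order $\tau_i^{-3}\delta\vartheta$ rather than $\e^{-3}\delta\vartheta$. This requires the residual Gaussian to be isotropic and not small for the summands that carry the third moments.

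\emph{Ordering.} With a fixed swapping order and non-identical summands, that requirement fails, because partial sums can have degenerate covariance. This is a different issue from the large-$\E\|\boldsymbol{Z_i}\|^3$ case you mention. A clean fix is the interpolation $\sqrt t\,\boldsymbol{S_m}+\sqrt{1-t}\,G$ with leave-one-out sums $\boldsymbol{S_m}-\boldsymbol{Z_i}$, whose covariance is $I_d-\Cov(\boldsymbol{Z_i})$; this is where your observation does enter.

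\emph{Closing.} With the interpolation, the remainder totals $\lesssim\delta\vartheta^2\int_0^1(\e^2+1-t)^{-3/2}dt\lesssim\delta\vartheta^2/\e$. Choosing $\e=K\vartheta$ with $K$ large then closes the induction.
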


\begin{proof}[Proof of Lemma \thv(2.2.lem1)] 
By \eqv(1.5), \eqv(1.6) and \eqv(2.2.2).
\be
X^{\s}_{h}(k_1,k_2)=\sum_{i=k_1+1}^{k_2}Y^{\s}_{h}(i)=\sum_{2^{k_1}<\log p\leq 2^{k_2}}W^{\s}_{h}(p),\quad h\in\R.
\Eq(2.2.lem1.30)
\ee
Part (i) will follow from Theorem \thv(2.2.theo1) applied to  the rightmost  sum in \eqv(2.2.lem1.30), {\it i.e.},  the sum over the  $W^{\s}_{h}(p)$'s.
To determine the two associated quantities that appear in  \eqv(2.2.theo1.1), we note, on the one hand, that by the independence of the $W^{\s}_{h}(p)$'s and \eqv(2.2.lem1.30), the variance term is given by \eqv(1.12) as
\be
\Var{X(k_1,k_2)}=\sum_{2^{k_1}<\log p\leq 2^{k_2}}\E\left[W^{\s}_{h}(p)^2\right]
=\varsigma^2_{k_1,k_2}
=\sum_{i=k_1+1}^{k_2}\varsigma^2_i.
\Eq(2.2.lem1.31)
\ee
Thus, by \eqv(2.1.lem1.1bis)
\be
\varsigma^2_{k_1,k_2}
=\sum_{i=k_1+1}^{k_2}v^2_i+\OO\left(\sqrt{2^{k_1}}e^{-c\sqrt{2^{k_1}}}\right),
\Eq(2.2.lem1.4)
\ee
where by the identity \eqv(2.1.lem1.4bis)
\be
\begin{split}
\sum_{i=k_1+1}^{k_2}v^2_i
& =
\frac{1}{2}\left(k_2\wedge n-k_1\right)\log 2+\OO\left(2^{-(n-k_2\wedge n)}\right).
\end{split}
\Eq(2.2.lem1.5)
\ee
On the other hand, we simply use that, by definition, $|W^{\s}_{h}(p)|\leq {1}/{p^{\s}}$, so that
\be
\begin{split}
\vartheta_{k_1,k_2}
=&
\sum_{2^{k_1}<\log p\leq 2^{k_2}}|W^{\s}_{h}(p)|^3
\\
\leq
&
\sum_{2^{k_1}<\log p\leq 2^{k_2}}\left(\frac{1}{p^{2\s}}\right)^{3/2}
\leq 
\frac{1}{2^{k_1-1}}e^{-2^{k_1-1}} (1+o(1)),
\end{split}
\Eq(2.2.lem1.4')
\ee
where the last inequality is  \eqv(2.1.5'bis) for $m=3/2$.
Combining \eqv(2.2.lem1.4) and \eqv(2.2.lem1.4'), the error term of \eqv(2.2.theo1.2) satisfies
\bea
\frac{\vartheta_{k_1,k_2}}{\varsigma^3_{k_1,k_2}}
=\OO\left(\left(k_2\wedge n-k_1\right)^{-3/2}2^{-k_1+1}e^{-2^{k_1-1}}\right).
\Eq(2.2.lem1.7)
\eea

Now, consider the Gaussian integral in \eqv(2.2.theo1.2) and set 
\be
\II=\frac{1}{\sqrt{2\pi}}\int_{a/\varsigma_{k_1,k_2}}^{\infty}e^{-\frac{x^2}{2}}dx.
\Eq(2.2.lem1.6'')
\ee
From well-known bounds on Gaussian tails (see, {\it e.g.}, 7.1.23-7.1.24 in \cite{AS}) we have,  for all $a>0$
\be
\frac{\varsigma_{k_1,k_2}}{a}\left[1-\left(\frac{\varsigma_{k_1,k_2}}{a}\right)^2\right]
\leq 
(2\pi)^{1/2}e^{+\frac{1}{2} {a^2}/{\varsigma^2_{k_1,k_2}}}
\II
\leq 
\frac{\varsigma_{k_1,k_2}}{a}.
\Eq(2.2.lem1.6)
\ee
Choosing
$
a
=\g\log\left(2^n\right)
$
for $\g>0$ and using \eqv(2.2.lem1.4)-\eqv(2.2.lem1.5), this yields
\be
\II
=
\left(1+\OO\left(\sfrac{1}{n}\right)\right)
\sqrt{\sfrac{k_2\wedge n-k_1+\OO\left(1\right)}{4\pi\g^2n^2\log 2}}
e^{-\frac{\g^2n^2\log 2}{k_2\wedge n-k_1+\OO\left(1\right)}}.
\Eq(2.2.lem1.6')
\ee
Thus, under the assumptions of the lemma and for all sufficiently large $n$, the error term \eqv(2.2.lem1.7) is negligible compared to the Gaussian integral \eqv(2.2.lem1.6'). In particular,
\be
\frac{\vartheta_{k_1,k_2}}{\varsigma^3_{k_1,k_2}}\frac{1}{\II}
=
o\left(e^{-2^{k_1-2}}\right),
\Eq(2.2.lem1.6''')
\ee
so that by  Theorem \thv(2.2.theo1),
\be
\P\left(X^{\s}_{h}(k_1,k_2)\geq \g n\log2\right)=\left(1+o\left(e^{-2^{k_1-2}}\right)\right)\II.
\Eq(2.2.lem1.6*)
\ee
Finally, using \eqv(2.2.1) and  \eqv(2.2.1') to express  $k_1$ and $k_2$ in terms of $\a_1$, $\bar\a_2$ and $n$, it follows from \eqv(2.2.lem1.6') and \eqv(2.2.lem1.6*) that for all $n$ large enough, 
$
\P\left(
X^{\s}_{h}(k_1,k_2)\geq \g n\log2
\right)
$
is equal to
\be
\left(1+\OO\left(\tfrac{1}{n}\right)\right)\sqrt{\tfrac{\bar\a_2-\a_1}{4\pi\g^2n \log2}}e^{-\frac{\g^2n\log 2}{\bar\a_2-\a_1+\OO\left(1/n\right)}}
\Eq(2.2.lem1.8)
\ee
where the error terms depend only on $\a_1$, $\bar\a_2$ and $\g$.
This yields \eqv(2.2.lem1.1), proving part (i).

We now turn to the proof of part (ii). Recall the notation \eqv(1.5), \eqv(1.6) and \eqv(2.2.2).
Given $h,h'\in \R$, we write
\be
\begin{split}
\boldsymbol{X}^{\s}(k_1,k_2)& =(X^{\s}_{h}(k_1,k_2), X^{\s}_{h'}(k_1,k_2)),
\\
\boldsymbol{Y}^{\s}(k)&=(Y^{\s}_{h}(k), X^{\s}_{h'}(k)),
\\
\boldsymbol{W}^{\s}(p)&=(W^{\s}_{h}(p), W^{\s}_{h'}(p)).
\end{split}
\Eq(2.2.lem1.20)
\ee
Then
\be
\boldsymbol{X}^{\s}(k_1,k_2)=\sum_{i=k_1+1}^{k_2}\boldsymbol{Y}^{\s}(i)=\sum_{2^{k_1}<\log p\leq 2^{k_2}}\boldsymbol{W}^{\s}(p).
\Eq(2.2.lem1.21)
\ee
The proof is similar in spirit to that of part (i), namely, we now apply Theorem \thv(2.2.theo2) to the rightmost sum in \eqv(2.2.lem1.21) over vectors $\boldsymbol{W}^{\s}(p)\in\R^2$. By independence and \eqv(2.2.lem1.21), the matrix $\boldsymbol{\S}_{k_1,k_2}$ from \eqv(2.2.theo2.1) satisfies
\be
\boldsymbol{\S}_{k_1,k_2}
=\sum_{2^{k_1}<\log p\leq 2^{k_2}}\Cov(\boldsymbol{W}^{\s}(p))
=\sum_{i=k_1+1}^{k_2}\Cov(\boldsymbol{Y}^{\s}(i)),
\Eq(2.2.lem1.22)
\ee
and so, writing  $\boldsymbol{\S}_{k_1,k_2}=(s_{kl})_{1\leq k,l\leq 2}$, we have
$
s_{11}=s_{22}=\varsigma^2_{k_1,k_2}
$ 
where $\varsigma^2_{k_1,k_2}$ is given by \eqv(2.2.lem1.4)-\eqv(2.2.lem1.5), while by \eqv(2.1.lem1.2bis), under the assumption that $h\curlyvee h'<k_1<n$
\be
\begin{split}
s_{12}=s_{21}
&=\sum_{i=k_1+1}^{k_2} \rho_{i}(h,h') 
\\
& =\OO\left(2^{h\curlyvee h'-k_1}\right)+\OO\left((k_2\wedge n-k_1)2^{h\curlyvee h'-n}\right)+\OO\left(\sqrt{2^{k_1}}e^{-c\sqrt{2^{k_1}}}\right).
\end{split}
\Eq(2.2.lem1.11)
\ee
One checks that $\boldsymbol{\S}_{k_1,k_2}$ has eigenvalues $s_{11}\pm s_{12}$, so that
$
\det\boldsymbol{\S}_{k_1,k_2}=s_{11}^2-s_{12}^2
$,
and that 
\be
\boldsymbol{\S}_{k_1,k_2}^{-1}
=
\frac{1}{s_{11}^2-s_{12}^2}
\begin{pmatrix}
s_{11} & -s_{12} \\
-s_{12} & s_{11} 
\end{pmatrix}.
\Eq(2.2.lem1.12)
\ee
Thus, choosing $A=[a,\infty)\times [a,\infty)$ in \eqv(2.2.theo2.2) where as before $a=\g\log\left(2^n\right)$ for $\g>0$
\be
\begin{split}
\JJ 
=& 
\frac{1}{\sqrt{(2\pi)^2\det\boldsymbol{\S}_{k_2-k_1}}}\int_Ad^2\mathbf{x}e^{-\frac{1}{2}\left(\mathbf{x},{\boldsymbol{\S}_{k_1,k_2}^{-1}}\mathbf{x}\right)}
\\
=&
\left[1-\bigl(\sfrac{s_{12}}{s_{11}}\bigr)^2\right]^{-1/2}(2\pi)^{-1}
\int_{a/\varsigma_{k_1,k_2}}^{\infty}dx_1\int_{a/\varsigma_{k_1,k_2}}^{\infty}dx_2
e^{-\left[1-\left(\frac{s_{12}}{s_{11}}\right)^2\right]^{-1}\left(\frac{x_1^2+x_2^2}{2}-\frac{s_{12}}{s_{11}}x_1x_2\right)}.
\end{split}
\Eq(2.2.lem1.13)
\ee
Since $2x_1x_2\leq x_1^2+x_2^2$, we have 
\be
 \JJ\leq \left[1-\bigl(\sfrac{s_{12}}{s_{11}}\bigr)^2\right]^{-1/2} \overline\II^2,\quad 
\overline\II=\frac{1}{\sqrt{2\pi}}\int_{a/\varsigma_{k_1,k_2}}^{\infty}dx e^{-\left(1+\frac{s_{12}}{s_{11}}\right)^{-1}\frac{x^2}{2}}.
\Eq(2.2.lem1.17)
\ee
In view of  \eqv(2.2.lem1.6''), $\overline\II$  differs from $\II$ only by a $(1+b)^{-1}$ term in the exponential with $b={s_{12}}/{s_{11}}$. 
To evaluate it, we introduce a threshold value $K>a/\varsigma_{k_1,k_2}$ satisfying $bK^2<C$ for some $0<C<\infty$ and decompose $\overline\II$ into $\overline\II=\overline\II_1+\overline\II_2$,
\be
\overline\II_1=\frac{1}{\sqrt{2\pi}}\int_{a/\varsigma_{k_1,k_2}}^{K}dx e^{-\frac{x^2}{2(1+b)}}, \quad
\overline\II_2=\frac{1}{\sqrt{2\pi}}\int_{K}^{\infty}dx e^{-\frac{x^2}{2(1+b)}}.
\ee
It is easy to see that
\be
\overline\II_1
=\frac{1}{\sqrt{2\pi}}\int_{a/\varsigma_{k_1,k_2}}^{K}dx e^{-\frac{x^2}{2}+\frac{b}{1+b}\frac{x^2}{2}}
\leq \II e^{\frac{1}{2}bK^2}
\leq \left(1+\OO\left(bK^2\right)\right)\II,
\ee
where we used in the last inequality that $e^x\leq 1+xe^C$ for all $0\leq x\leq C$, while proceeding as in \eqv(2.2.lem1.6)-\eqv(2.2.lem1.6') to bound $\overline\II_2$,
\be
\overline\II_2
\leq 
\left(1+\OO(\sfrac{1}{n})\right)\frac{a}{K\varsigma_{k_1,k_2}} 
e^{-\frac{1}{2}\left(K^2/(1+b)- {a^2}/{\varsigma^2_{k_1,k_2}}\right)}\II .
\ee
The choice of $K$ now depends on the behaviour of $b$.  According to \eqv(2.2.lem1.11), the identity $s_{11}=\varsigma^2_{k_1,k_2}$ and \eqv(2.2.lem1.4)-\eqv(2.2.lem1.5), we have
\be
b=\frac{s_{12}}{s_{11}}
=
\frac{1}{\varsigma^2_{k_1,k_2}}\OO\left(2^{h\curlyvee h'-k_1}\right)
=
\OO\left(\frac{1}{n}2^{h\curlyvee h'-k_1}\right).
\Eq(2.2.lem1.18)
\ee
If $h\curlyvee h'<k_1$ then at best ${s_{12}}/{s_{11}}\leq\OO(1/n)$ and since ${a}/{\varsigma_{k_1,k_2}}=\OO(\sqrt{n})$, choosing  $K=3{a}/{\varsigma_{k_1,k_2}}$ yields  $\overline\II=\left(1+\OO(1)\right)\II$. By contrast, if  $h\curlyvee h'-k_1\leq -\e n$ for some $\e>0$ then, choosing $K=n$ yields $\overline\II=\left(1+\OO\left(n 2^{-\e n}\right)\right)\II$. Using \eqv(2.2.lem1.6*) to express $\II$ as a function of $\P\left(X^{\s}_{h}(k_1,k_2)\geq \g n\log2\right)$, and plugging the resulting bound on $\overline\II$ into the bound on $\JJ$ (see \eqv(2.2.lem1.17)), we finally get
\be
\JJ\leq (1+r)\left[\P\left(X^{\s}_{h}(k_1,k_2)\geq \g n\log2\right)\right]^2,
\Eq(2.2.lem1.14)
\ee
where $r=\OO(1)$ if $h\curlyvee h'<k_1$ and $r=\OO\left(n2^{-\e n}\right)$ if $h\curlyvee h'-k_1\leq -\e n$ for some $\e>0$.

It remains to bound the error term of \eqv(2.2.theo2.2), {\it i.e.}, to bound the last two quantities in \eqv(2.2.theo2.1). 
Firstly, proceeding just as in \eqv(2.2.lem1.4'), 
\be
\begin{split}
\boldsymbol{\vartheta}_{k_1,k_2}
&=\sum_{2^{k_1}<\log p\leq 2^{k_2}}{\E}\|\boldsymbol{W}^{\s}(p)\|^3
=\sum_{2^{k_1}<\log p\leq 2^{k_2}}{\E}\left[\left(W^{\s}_{h}(p)^2+W^{\s}_{h'}(p)^2\right)^{3/2}\right]
\\
&\leq
2^{3/2}\frac{1}{2^{k_1-1}}e^{-2^{k_1-1}} (1+o(1)).
\end{split}
\Eq(2.2.lem1.15)
\ee
Secondly, $\l_{k_1,k_2}\geq s_{11}-|s_{12}|$ (see the line above \eqv(2.2.lem1.12)), so that
\be
\l_{k_1,k_2}
\geq \varsigma^2_{k_1,k_2}+\OO\left(2^{h\curlyvee h'-k_1}\right).
\Eq(2.2.lem1.16)
\ee
Note that the bounds on $\vartheta_{k_1,k_2}$ and $\boldsymbol{\vartheta}_{k_1,k_2}$ in \eqv(2.2.lem1.4') and \eqv(2.2.lem1.15)  differ only from a constant prefactor. Note also that the second  term on the right-hand side of \eqv(2.2.lem1.16) is negligible compared to $\varsigma^2_{k_1,k_2}$.
It thus follows from  \eqv(2.2.lem1.4) and \eqv(2.2.lem1.5) that the term  ${\boldsymbol{\vartheta}_{k_1,k_2}}/{\l_{k_1,k_2}^{3/2}}$ is exactly of the same order as  ${\vartheta_{k_1,k_2}}/{\varsigma^3_{k_2,k_2}}$ in \eqv(2.2.lem1.7). From there, the proof is a re-run of the proof of item (i) of Lemma \thv(2.2.lem1). The proof of item (ii) is now complete.

The proof of part (iii) is immediate since the intersection of two events is contained in either of these two events. This is of course true for all $h,h'\in[0,1]$. Clearly, however, it can only be sharp if $|h-h'|\leq 2^{-\a_2n}$, namely before the \emph{branching point} of the underlying random walks associated to the processes $X^{\s}_h(k)$ and $X^{\s}_{h'}(k)$ (see the remark below Lemma \thv(2.2.lem1)). Note also that the strategy of proof based on the Berry-Esseen theorem fails here, since the covariance matrix is close to a projector and its smallest eigenvalue decays to zero exponentially fast as $n$ diverges.
With this, the proof of Lemma \thv(2.2.lem1) is complete.
\end{proof}

The proof of Lemma \thv(2.2.lem2) closely follows the strategy of  \cite{ABH} to prove an analogous result in the case of the truncated $\s=1/2$ process. 
It is based on two general a priori bounds, given as Lemma \thv(2.2.lem5') and Lemma \thv(2.2.prop2)(i), the latter lemma in turn being based on the preliminary Lemma \thv(2.2.lem3)(i).

In the rest of this section the notation $\varsigma^2_{k_1,k_2}$ introduced in \eqv(1.12) applies.

\begin{lemma}
    \TH(2.2.lem5')
For all $0\leq k_1<n$, $k_2\geq k_1+1$, $h\in[0,1]$, $\s>1/2$ and all $x>0$
\be
\P\left[X^{\s}_{h}(k_1,k_2)>x\right]
\leq \exp\biggl(-\frac{x^2}{2\varsigma^2_{k_1,k_2}}\biggr),
\Eq(2.2.lem5'.1)
\ee
where 
\be
2\varsigma^2_{k_1,k_2}=
\left(k_2\wedge n-k_1\right)\log 2+\OO\left(2^{-(n-k_2\wedge n)}\right)+\OO\left(\sqrt{2^{k_1}}e^{-c\sqrt{2^{k_1}}}\right).
\Eq(2.2.lem5'.2)
\ee
\end{lemma}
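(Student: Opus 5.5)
This is a sub-Gaussian (Hoeffding-type) bound, proved by the exponential Chebyshev inequality. The plan is to bound the moment generating function of each independent summand $W^{\s}_h(p)$ by a Gaussian one with exactly the same variance, and then optimize in the exponential parameter.

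First, write $X^{\s}_h(k_1,k_2)=\sum_{2^{k_1}<\log p\leq 2^{k_2}}W^{\s}_h(p)$ as in \eqv(2.2.lem1.30). The summands are independent and centred. Each has the form $W^{\s}_h(p)=p^{-\s}\cos(2\pi\theta_p - h\log p)$, and since $\theta_p$ is uniform, $W^{\s}_h(p)$ has the law of $p^{-\s}\cos(2\pi\theta)$ with $\theta$ uniform on $[0,1]$.

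For $\l\in\R$ and $a=\l p^{-\s}$, compute
\[
\E\, e^{a\cos(2\pi\theta)}=I_0(a)=\sum_{j\geq 0}\frac{(a/2)^{2j}}{(j!)^2}.
\]
Since $(j!)^2\geq j!\,2^j$ (because $j!\geq 2^{j-1}$ up to the trivial cases $j=0,1$, where equality holds), we get $I_0(a)\leq \sum_j (a^2/4)^j/j! = e^{a^2/4}$. It is worth checking this carefully, but the conclusion is $\E\, e^{\l W^{\s}_h(p)}\leq \exp(\l^2/(4p^{2\s}))=\exp\bigl(\tfrac{\l^2}{2}\E[W^{\s}_h(p)^2]\bigr)$, using \eqv(1.7) with $h=h'$.

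By independence, $\E\, e^{\l X^{\s}_h(k_1,k_2)}\leq \exp(\l^2\varsigma^2_{k_1,k_2}/2)$, with $\varsigma^2_{k_1,k_2}$ given by \eqv(1.12). Markov's inequality then gives
\[
\P\bigl(X^{\s}_h(k_1,k_2)>x\bigr)\leq \exp\bigl(-\l x+\l^2\varsigma^2_{k_1,k_2}/2\bigr),
\]
and the choice $\l=x/\varsigma^2_{k_1,k_2}$ yields \eqv(2.2.lem5'.1). This estimate holds for every $\s>1/2$, every $h$ and every $x>0$, with no restriction on $k_1,k_2$. The convergence of the infinite product when $k_2=\infty$ follows from $\varsigma^2_{k_1,\infty}<\infty$ for $\s>1/2$, together with monotone convergence (or Fatou) applied to the truncated sums.

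Second, the asymptotic expression \eqv(2.2.lem5'.2) is simply twice \eqv(2.1.lem1.1bis) combined with \eqv(2.1.lem1.4bis) of Lemma \thv(2.1.lem2). For this the hypothesis $k_1<n$ is used, so that $k_1\wedge n=k_1$. The only point requiring care is the Bessel-function comparison $I_0(a)\leq e^{a^2/4}$, that is, the coefficientwise inequality $(j!)^2\geq 2^j j!$. Everything else follows from results already stated above.
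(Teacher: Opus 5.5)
Your proof is correct and follows the paper's argument essentially step for step. The paper also computes $\E\exp[\l W^{\s}_h(p)]=\sum_{m\geq 0}(m!)^{-2}\bigl(\l^2/4p^{2\s}\bigr)^m$ (via the Wallis integral, i.e.\ $I_0$) and bounds it by $\exp(\l^2/4p^{2\s})$. It then multiplies over the independent primes, applies the exponential Chebyshev inequality with $\l=x/\varsigma^2_{k_1,k_2}$, and reads off \eqv(2.2.lem5'.2) from \eqv(2.1.lem1.1bis) and \eqv(2.1.lem1.4bis) using $k_1<n$.

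One correction to the step you flagged. The coefficientwise inequality you need is only $(j!)^2\geq j!$, i.e.\ $j!\geq 1$. The inequality $(j!)^2\geq 2^j j!$ that you state is false for $j=1,2,3$. Had it held, it would give $I_0(a)\leq e^{a^2/8}$, which is itself false since $I_0(a)=1+a^2/4+\dots$ for small $a$.
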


\begin{proof}[Proof of Lemma \thv(2.2.lem5')]
By \eqv(2.2.lem1.30) and independence, for all $\l>0$
\be
\E e^{\l X^{\s}_{h}(k_1,k_2)}
=
\prod_{2^{k_1}<\log p\leq 2^{k_2}}
\E \exp\left[\l W^{\s}_h(p)\right].
\Eq(2.2.lem5'.3)
\ee
Expanding the exponential in series, a direct calculation of the moments using the Wallis integral $$\frac{1}{2\pi}\int_0^{2\pi} d\theta \cos^{2m}\theta =\frac{(2m)!}{2^{2m}(m!)^2}$$ yields
\be
\E \exp\left[\l W^{\s}_h(p)\right]
=
\sum_{m=0}^{\infty}\frac{1}{(m!)^2}\left(\frac{\l^2}{4p^{2\s}}\right)^m.
\Eq(2.2.lem5'.4)
\ee
Since the Laplace transform of $X^{\s}_{h}(k_1,k_2)$ is independent of $h$, so is its distribution, as anticipated. 

Next, by \eqv(2.2.lem5'.3), \eqv(2.2.lem5'.4), the bound
\be
\sum_{m=0}^{\infty}\frac{1}{(m!)^2}\left(\frac{\l^2}{4p^{2\s}}\right)^m
\leq
\exp\left(\frac{\l^2}{4p^{2\s}}\right),
\Eq(2.2.lem5'.6)
\ee
and the identities \eqv(1.9) and \eqv(1.12), we obtain that for all $\l>0$
\be
\E \exp\left(\l X^{\s}_{h}(k_1,k_2)\right)
\leq 
\exp\left(\frac{\l^2}{2}\varsigma^2_{k_1,k_2}\right).
\Eq(2.2.lem5'.7)
\ee
Since by  Chebyshev's exponential inequality, for all $\l>0$
\be
\P\left[X^{\s}_{h}(k_1,k_2)>x\right]\leq\exp\left(\frac{\l^2}{2}\varsigma^2_{k_1,k_2} -\l x\right),
\Eq(2.2.lem5'.8)
\ee
taking
$
\l=x/\varsigma^2_{k_1,k_2}
$
in \eqv(2.2.lem5'.8) yields \eqv(2.2.lem5'.1).
Finally, it follows from \eqv(2.1.lem1.1bis) and \eqv(2.1.lem1.4bis) that $2\varsigma^2_{k_1,k_2}$ obeys \eqv(2.2.lem5'.2) under the assumptions on $k_1,k_2$ of the lemma.
The proof of Lemma \thv(2.2.lem5') is done.
\end{proof}

\begin{lemma}
    \TH(2.2.lem3)
Given $C>0$ 
we have, for all $0\leq k_1<n$, $k_2\geq k_1+1$,
$0<x<C\varsigma^2_{k_1,k_2}$, $0\leq y\leq 2^{2(k_2\wedge n)}$, 
all $h,h'$  such that $|h'-h|\leq 2^{-(k_2\wedge n)}$ and all $\s,\s'\in[\s_n,\s_{n-1})$,
\be
\begin{split}
&
\P\left[X^{\s}_{0}(k_1,k_2)\geq x, X^{\s'}_{h'}(k_1,k_2)-X^{\s}_{h}(k_1,k_2)\geq y\right]
\\
\leq & 
c\exp\biggl(
-\frac{x^2}{2\varsigma^2_{k_1,k_2}}
-\frac{c'y^{3/2}}{2^{k_2\wedge n}(|h'-h|+|\sigma'-\sigma|)}
\biggr),
\end{split}
\Eq(2.2.lem3.1)
\ee
for some positive constants $c,c'>0$ that depend on $C$, 
and  where $2\varsigma^2_{k_1,k_2}$ satisfies \eqv(2.2.lem5'.2).
\end{lemma}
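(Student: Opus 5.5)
This is a two-dimensional exponential Chebyshev bound, run in the same way as the proof of Lemma \thv(2.2.lem5'), following the strategy of \cite{ABH}. Write $X:=X^{\s}_0(k_1,k_2)$ and $D:=X^{\s'}_{h'}(k_1,k_2)-X^{\s}_h(k_1,k_2)$. Both are sums over primes $2^{k_1}<\log p\le 2^{k_2}$ of the independent, bounded, mean-zero variables
\[
W^{\s}_0(p)\quad\text{and}\quad d_p:=W^{\s'}_{h'}(p)-W^{\s}_h(p).
\]
For parameters $\l,\mu>0$, Markov's inequality gives
\[
\P(X\ge x,\,D\ge y)\le e^{-\l x-\mu y}\prod_p \E\exp\big(\l W^{\s}_0(p)+\mu d_p\big).
\]
I will choose $\l=x/\varsigma^2_{k_1,k_2}$, exactly as in Lemma \thv(2.2.lem5'). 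I will choose $\mu$ of order $y^{1/2}/\big(2^{k_2\wedge n}\delta\big)$, where $\delta:=|h'-h|+|\s'-\s|$, and I will optimize $\mu$ at the end.

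\textbf{Bounding each factor.} Since $U_p$ is uniform on the circle, $\E W^{\s}_0(p)=\E d_p=0$. Taylor expansion with all moments then gives
\[
\log\E e^{\l W+\mu d}\le \tfrac12\E(\l W+\mu d)^2+\OO\big(\E|\l W+\mu d|^3 e^{|\l W|+|\mu d|}\big).
\]
The increment $d_p$ is controlled by the mean value theorem in $(\s,h)$: $|d_p|\le \delta\, p^{-\s_n}(\log p)\,(1+o(1))$. Three consequences follow.

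\begin{itemize}
\item The $\l^2$ terms sum to $\tfrac12\l^2\varsigma^2_{k_1,k_2}$. Together with $-\l x$ this produces the Gaussian factor $e^{-x^2/(2\varsigma^2)}$.
\item The $\mu^2$ terms are at most $\mu^2\delta^2\sum_p(\log p)^2p^{-2\s_n}$. By Lemma \ref{rk1} with $m=2$, and using $k_2\wedge n$ because $\s\in[\s_n,\s_{n-1})$, this is $\lesssim \mu^2\delta^2 2^{2(k_2\wedge n)}$.
\item The cross terms are $\l\mu\,\E[W d]$, which is $\lesssim \l\mu\delta\sum_p (\log p)p^{-2\s}$. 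By Lemma \ref{rk1} with $m=1$ this is $\lesssim \l\mu\delta\, 2^{k_2\wedge n}$. Since $\l\le C$, this is at most $C\mu\delta 2^{k_2\wedge n}$.
\end{itemize}
The cubic remainders are summable and harmless for the small primes, by Lemma \ref{rk1bis} and Lemma \ref{rk2}. This needs $\mu\delta\log p\lesssim 1$, i.e. $\mu\delta 2^{k_2\wedge n}=\OO(\sqrt y)$ together with boundedness of $W$. This is where the restriction $y\le 2^{2(k_2\wedge n)}$ should enter, keeping $\mu\delta\lesssim 1$ and $\mu\lesssim 1/\delta$.

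\textbf{Optimizing in $\mu$.} Collecting the bounds, the exponent is at most
\[
-\frac{x^2}{2\varsigma^2}-\mu y+c_1\mu^2\delta^2 2^{2(k_2\wedge n)}+c_2\mu\delta 2^{k_2\wedge n}+\OO(1).
\]
A pure quadratic optimization would give $y^2/(\delta^2 2^{2(k_2\wedge n)})$. That exceeds the claimed $y^{3/2}/(2^{k_2\wedge n}\delta)$ only when $y\ge \delta^2 2^{2(k_2\wedge n)}$, and in that regime I will not run the quadratic optimization. Instead I will take $\mu=c'\sqrt y/(2^{k_2\wedge n}\delta)$. Then $\mu y$ equals the target $c' y^{3/2}/(2^{k_2\wedge n}\delta)$, and the quadratic term $c_1c'^2 y$ can be absorbed for small $c'$ because $y\le \sqrt y\cdot\sqrt y$ and $\sqrt y/(2^{k_2\wedge n}\delta)\ge 1$ in the relevant range. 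In the complementary range $y<\delta^2 2^{2(k_2\wedge n)}\le 4$, the target exponent is $\OO(1)$, since $\delta 2^{k_2\wedge n}\lesssim 1$ there. That case follows directly from Lemma \thv(2.2.lem5') by adjusting $c$.

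\textbf{Main obstacle.} The hard part is controlling the cross term and the higher cumulants uniformly when $\s\neq\s'$ near $1/2$, which is why the $\s$-increment is measured against $2^{k_2\wedge n}$. The key choice is $\mu\propto\sqrt y$: it gives the $y^{3/2}$ exponent while keeping $\mu|d_p|\lesssim 1$ for every prime $p$ in the range, so that the sub-Gaussian bound \eqv(2.2.lem5'.6)-type estimate for the moment generating function stays valid.
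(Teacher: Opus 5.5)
Your proposal is correct and follows essentially the same route as the paper's proof. Both use a two-parameter exponential Chebyshev bound with $\lambda_1=x/\varsigma^2_{k_1,k_2}$ and $\lambda_2\propto \sqrt{y}/(2^{k_2\wedge n}\delta)$, bound the per-prime log-MGF quadratically and sum it via Lemma \ref{rk1}, get boundedness of $\lambda_2\delta$ from $y\le 2^{2(k_2\wedge n)}$, and reduce the small-$y$ regime to Lemma \thv(2.2.lem5'). The differences are cosmetic: the paper evaluates the MGF exactly through $I_0$ and uses $\log I_0(\sqrt{u})=u/4+\OO(u^2)$ instead of your third-order Taylor bound, and it splits at the threshold $y\ge C_0 2^{k_2\wedge n}\delta$. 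The linear term $c_2c'\sqrt{y}$, which you leave implicit in the optimization, is absorbed like the quadratic one (or into $c$ when $y$ is bounded).
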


\begin{proof}[Proof of Lemma \thv(2.2.lem3)] If $y$ in \eqv(2.2.lem3.1) is smaller than $C_02^{k_2\wedge n}(|h'-h|+|\s'-\s|)$ for some large constant $C_0>0$, then the $y$-term in \eqv(2.2.lem3.1) is at most $\OO(1)$ and the claim of the lemma reduces to that of Lemma \thv(2.2.lem5'). We can therefore assume that $y$ is larger than such a threshold. 
For any $\l_1, \l_2>0$, the left-hand side of \eqv(2.2.lem3.1)
is bounded from above by
\be
\E\left[\exp\left(\l_1X^{\s}_{0}(k_1,k_2)+\l_2\left(X^{\s'}_{h'}(k_1,k_2)-X^{\s}_{h}(k_1,k_2\right)\right)\right]
\exp(-\l_1x-\l_2y).
\Eq(2.2.lem3.2)
\ee
From this point onwards, the proof of Lemma \thv(2.2.lem3) is an extension of the proof of Lemma 2.7 in  \cite{ABH}, with the process $X^{\s}_{h}(k_1,k_2)$ defined in \eqv(1.11) for $\s>1/2$ substituted for the process with $\s=1/2$ treated there.
We need to evaluate 
\be
\log\E\left[\exp\left(\l_1X^{\s}_{0}(k_1,k_2)+\l_2\left(X^{\s'}_{h'}(k_1,k_2)-X^{\s}_{h}(k_1,k_2\right)\right)\right].
\Eq(2.2.lem3.3)
\ee
First, we note that, similarly to results (39)-(40) and (50)-(51) in \cite{ABH} 
\be
\begin{split}
&
\E\left[\exp\left(\l_1W^{\s}_{0}(p)+\l_2\left(W^{\s'}_{h'}(p)-W^{\s}_{h}(p)\right)\right)\right]
\\
=&
\frac{1}{2\pi}\int_0^{2\pi}d\theta\exp\left(\frac{\l_1}{p^{\s}}\cos \theta+\frac{\l_2}{p^{\s'}}\cos(\theta+h'\log p)-\frac{\l_2}{p^{\s}}\cos(\theta+h\log p)\right).
\end{split}
\Eq(2.2.lem3.4)
\ee
Using the identity
\be
\frac{1}{2\pi}\int_0^{2\pi} d\theta \exp\left(a\cos \theta +b\sin\theta\right)=I_0\left(\sqrt{a_p^2+b_p^2}\right),
\Eq(2.2.lem3.5)
\ee
where $I_0$ denotes the modified Bessel function of the first kind, as well as the identity
$
\cos(\theta+\eta)=\cos\theta\cos\eta-\sin\theta\sin\eta
$, 
we get for \eqv(2.2.lem3.4)
\be
a_p=\frac{\l_1}{p^{\s}}+\frac{\l_2}{p^{\s'}}\cos(h'\log p)-\frac{\l_2}{p^{\s}}\cos(h\log p),
\Eq(2.2.lem3.6)
\ee
and 
\be
b_p=-\frac{\l_2}{p^{\s'}}\sin(h'\log p)+\frac{\l_2}{p^{\s}}\sin(h\log p).
\Eq(2.2.lem3.7)
\ee
At this point we recall that $\log I_0(\sqrt{x})=x/4+\OO(x^2)$ and note that $\cos(h'\log p)-\cos(h\log p)=\OO (|h'-h|\log p)$ and $\sin(h'\log p)-\sin(h\log p)=\OO (|h'-h|\log p)$. Also note that
\be
\left|\frac{1}{p^{\s}}-\frac{1}{p^{\s'}}\right|\leq \frac{1}{p^{\s\wedge\s'}}\left|\s'-\s\right|\log p.
\Eq(2.2.lem3.8)
\ee
Assume that $\s\wedge\s'=\s$. Further assume that the parameters $\l_1$ and $\l_2$ satisfy
$\l _1\leq C_1$ and $\l_2(|h'-h|+|\s'-\s|)\leq C_2$ for some constants $C_1, C_2>0$ 
(observe that this will introduce a condition on $x$ and $y$) 
and let $0<c,c',c''<\infty$ be numerical constants whose values may change from line to line. Then,
 \be
\begin{split}
a_p^2+b_p^2
\leq 
A_p\equiv &\frac{1}{p^{2\s}}\left(\l_1+c\l_2(|h'-h| +|\s'-\s|)\log p\right)^2
\\
+
&\frac{1}{p^{2\s}}\left(c'\l_2(|h'-h| +|\s'-\s|)\log p\right)^2
\\
\leq 
&
\frac{\l_1^2}{p^{2\s}}+\frac{c}{p^{2\s}}\l_2(|h'-h| +|\s'-\s|)\log p
\\
+&
 \frac{c'}{p^{2\s}}\left(\l_2(|h'-h| +|\s'-\s|)\log p\right)^2,
\end{split}
 \ee
where we only used the boundedness assumption on $\l_1$, not on $\l_2$. The logarithm of the first line in \eqv(2.2.lem3.4) is then at most 
\be
\begin{split}
\frac{1}{4}A_p+\OO\bigl(A_p^2\bigr)
\leq 
&
\frac{\l_1^2}{4 p^{2\s}}+\frac{c}{p^{2\s}}\l_2(|h'-h| +|\s'-\s|)\log p
\\
+&
 \frac{c'}{4p^{2\s}}\left[\l_2(|h'-h| +|\s'-\s|)\log p\right]^2
\\
+&
\frac{c''}{p^{4\s}}\left\{1+\sum_{i=1}^4\left[\l_2(|h'-h| +|\s'-\s|)\log p\right]^i\right\}.
\end{split}
\Eq(2.2.lem3.9)
\ee
Summing over prime numbers $p$ such that $2^{k_1}< \log p\leq 2^{k_2}$ thus yields
\be
\begin{split}
&
\log\E\left[\exp\left(\l_1X^{\s}_{0}(k_1,k_2)+\l_2\left(X^{\s'}_{h'}(k_1,k_2)-X^{\s}_{h}(k_1,k_2\right)\right)\right]
\\
\leq &
\,c
+\sum_{2^{k_1}<\log p\leq 2^{k_2}}\frac{\l_1^2}{4p^{2\s}}
+c'\sum_{2^{k_1}<\log p\leq 2^{k_2}}\frac{\log p}{p^{2\s}}\l_2(|h'-h|+|\s'-\s|)
\\
&
+c'\sum_{2^{k_1}<\log p\leq 2^{k_2}}\frac{(\log p)^2}{p^{2\s}}\left(\l_2(|h'-h|+|\s'-\s|)\right)^2,
\end{split}
\Eq(2.2.lem3.10)
\ee
where we used that by  Lemma \thv(rk1bis) and under our boundedness assumption on $\l_2$
\be
\sum_{2^{k_1}<\log p\leq 2^{k_2}}\frac{c''}{p^{4\s}}\left\{1+\sum_{i=1}^4\left[\l_2(|h'-h| +|\s'-\s|)\log p\right]^i\right\}\leq c.
\Eq(2.2.lem3.10')
\ee
Therefore, by \eqv(1.12), \eqv(2.1.F10bis) and \eqv(2.1.F11bis),  \eqv(2.2.lem3.10) is bounded from above by
\be
c
+\frac{\l_1^2}{2}\varsigma^2_{k_1,k_2}
+c'2^{k_2\wedge n}\l_2(|h'-h|+|\s'-\s|)
+c'\left(2^{k_2\wedge n}\l_2(|h'-h|+|\s'-\s|)\right)^2.
\Eq(2.2.lem3.4bis)
\ee
Combining \eqv(2.2.lem3.2) and  \eqv(2.2.lem3.4bis), \eqv(2.2.lem3.1) is obtained by choosing
$
\l_1=x/\varsigma^2_{k_1,k_2}
$
and 
$
\l_2=c''y^{1/2}(2^{k_2\wedge n}(|h'-h|+|\s'-\s|))^{-1}
$
for some $c''>0$. The latter choice is meaningful since we assumed that $y$ is  larger than $C_02^{k_2\wedge n}(|h'-h|+|\s'-\s|)$. 
Note that, under the assumptions on $x$ and $y$ of the lemma, our choices of $\l_1$ and $\l_2$ guarantee that the boundedness assumptions $\l_1\leq C_1$ and $\l_2(|h'-h|+|\s'-\s|)\leq C_2$ are satisfied for any $C_1\geq C $ and $C_2= c''$.
The proof of Lemma \thv(2.2.lem3) is done.
\end{proof}

The next auxiliary result is the key to proving Lemma \thv(2.2.lem2).
\begin{lemma}
	\TH(2.2.prop2)
\item{(i)} Given $C>0$ we have, 
for all $0\leq k_1<n$, $k_2\geq k_1+1$,
$0<x<C\varsigma^2_{k_1,k_2}$, $2\leq a\leq 2^{2(k_2\wedge n)}-x$,
all $h\in[0,1]$ and all $\s\in[\s_n,\s_{n-1})$,
\be
\begin{split}
& 
\P\left(
\max_{\s'\in[\s_n,\s_{n-1})}\max_{h':|h-h'|\leq 2^{-(k_2\wedge n)}}X^{\s'}_{h'}(k_1,k_2)\geq x+a, X^{\s}_{h}(k_1,k_2)\leq x
\right)
\\
\leq 
& \, c\exp\biggl(
-\frac{x^2}{2\varsigma^2_{k_1,k_2}}-c'a^{3/2}
\biggr)
\end{split}
\Eq(2.2.prop2.1)
\ee
for some positive constants $c,c'>0$ that depend on $C$, and where $2\varsigma^2_{k_1,k_2}$ satisfies \eqv(2.2.lem5'.2).

\item{(ii)} There exist constants $0<C_0\leq C<\infty$ such that the following holds. 
For all $C_0\leq y\leq C 2^{2n}$ and all  intervals $I\subset[0,1]$, 
\be
\P\left(
\max_{\s,\s'\in[\s_n,\s_{n-1})}\max_{h,h'\in I}\left|X^{\s}_{h}-X^{\s'}_{h'}\right|\geq y
\right)
\leq 
c\exp\left\{-c'y^{3/2}\right\},
\Eq(2.2.lem6.1)
\ee
for some constants $c,c'>0$ that depend on $C$ and $C_0$.
\end{lemma}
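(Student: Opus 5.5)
We prove both parts with a chaining argument in the joint parameter $(h,\s)$, the same way \cite{ABH} handles the truncated $\s=1/2$ process. The increment tail bound of Lemma \thv(2.2.lem3) serves as the one-step estimate.

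\textbf{Part (i).} We parametrize the box $\{|h'-h|\le 2^{-(k_2\wedge n)}\}\times[\s_n,\s_{n-1})$ so that it has size of order $2^{-(k_2\wedge n)}$ in both directions. Indeed $|\s_{n-1}-\s_n|=2^{-(n+1)}\le 2^{-(k_2\wedge n)}$.

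For $j\ge 0$ we introduce dyadic grids $\mathcal G_j$ of mesh $2^{-(k_2\wedge n)-j}$ in this box, with $|\mathcal G_j|\le 4^{j+1}$. For each point $(h',\s')$ let $\pi_j(h',\s')$ be its nearest grid point in $\mathcal G_j$, with $\pi_0=(h,\s)$. By continuity of $X^{\s}_h(k_1,k_2)$, a finite sum over primes, we can write
\[
X^{\s'}_{h'}(k_1,k_2)-X^{\s}_{h}(k_1,k_2)=\sum_{j\ge1}\bigl(X_{\pi_j}-X_{\pi_{j-1}}\bigr).
\]
We fix a summable sequence of thresholds $a_j=c_0a\,2^{-j/10}$ with $\sum_j a_j\le a$. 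On the event in \eqv(2.2.prop2.1) we have $X^{\s}_h\le x$, and hence there is some $j$ and some pair of consecutive grid points with increment at least $a_j$.

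To bound this event we must also keep the information that $X$ is large. We therefore split according to the value of $X^{\s}_h(k_1,k_2)$ in unit windows $[x-\ell-1,x-\ell]$, $\ell\ge0$. On the window $\ell$ the increment must be at least $a+\ell$. For a fixed pair of consecutive points at level $j$, Lemma \thv(2.2.lem3) applies with $|h'-h|+|\s'-\s|\lesssim 2^{-(k_2\wedge n)-j}$. (It is applied with $h=0$ replaced by $h$, which is allowed since the law is translation invariant.) This gives
\[
c\exp\Bigl(-\frac{(x-\ell-1)^2}{2\varsigma^2_{k_1,k_2}}-c'(a_j+\ell)^{3/2}2^{j}\Bigr).
\]

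We then do a union bound over the $\lesssim 4^{j}$ pairs at level $j$, sum over $j$ and over $\ell$, and obtain $c\exp(-x^2/2\varsigma^2_{k_1,k_2}-c'a^{3/2})$. Two facts make the sums converge:
\begin{itemize}
\item The factor $2^{j}(a_j)^{3/2}\gtrsim a^{3/2}2^{0.85 j}$ beats the entropy $4^j$ once $a\ge2$.
\item The Gaussian loss $\exp(x(\ell+1)/\varsigma^2_{k_1,k_2})$ is at most $e^{C(\ell+1)}$ because $x<C\varsigma^2_{k_1,k_2}$, so it is absorbed by $e^{-c'\ell^{3/2}}$.
\end{itemize}
The constraints $x<C\varsigma^2$ and $y\le 2^{2(k_2\wedge n)}$ needed in Lemma \thv(2.2.lem3) hold thanks to $a+x\le 2^{2(k_2\wedge n)}$.

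\textbf{Part (ii).} Apply the same chaining to the full process $X^{\s}_h=X^{\s}_h(0,\infty)$ over $I\times[\s_n,\s_{n-1})$.
\begin{itemize}
\item If $|I|\le 2^{-n}$, use Lemma \thv(2.2.lem3) with $x$ dropped (equivalently $\l_1=0$, $k_1=0$, $k_2=\infty$, so $k_2\wedge n=n$). Chaining as above gives $\P(\max|X^{\s}_h-X^{\s'}_{h'}|\ge y)\le c e^{-c'y^{3/2}}$. A prime-sum remainder near $p$ small contributes only $O(1)$, which is absorbed by taking $y\ge C_0$.
\item If $I$ is longer, the same argument needs to be applied with the grid starting at mesh $|I|$. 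This is the main obstacle: for $|h-h'|\gg2^{-n}$ the increment variance is no longer small, so Lemma \thv(2.2.lem3) does not directly give a $y^{3/2}$ tail. I expect the intended reading is that $I$ has length at most $2^{-n}$, as it is used in Lemma \thv(2.2.lem2). Otherwise I would first try to cover $I$ by intervals of length $2^{-n}$ and accept the extra entropy factor $|I|2^{n}$, absorbing it via $y\ge C_0$ only when $|I|2^n$ is bounded.
\end{itemize}

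The delicate bookkeeping in part (i) is matching the $\ell$-shift with the Gaussian factor, which is where the hypothesis $x<C\varsigma^2_{k_1,k_2}$ is essential.
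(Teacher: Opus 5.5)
Your proposal is essentially the paper's proof. Both use chaining in $(h,\sigma)$ with Lemma~\ref{2.2.lem3} as the one-step estimate, slice $X^{\sigma}_h(k_1,k_2)$ into unit windows, and absorb the Gaussian loss through $x<C\varsigma^2_{k_1,k_2}$ together with $(\ell+a)^{3/2}\ge \ell^{3/2}+a^{3/2}$. One fix: the level-$j$ threshold on window $\ell$ must be a summable fraction of $a+\ell$, e.g.\ $c_0(a+\ell)2^{-j/10}$, not $a_j+\ell$. Otherwise the thresholds sum to infinity and the union-bound inclusion fails.

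Your caveat on (ii) is well founded. The paper's chaining there reuses the grid built on an interval of length $2^{-(k_2\wedge n)}$, so it only proves the bound for $|I|\le 2^{-n}$. That is the only case used later (intervals of length $2^{-n}$ in Lemma~\ref{3.lem2'}(iii)). For $I=[0,1]$ the claim is actually false: $X^{\sigma_n}_0-X^{\sigma_n}_{1/2}$ is asymptotically Gaussian with variance $n\log 2+O(1)$, so $y=\sqrt{n}$ gives a probability bounded away from zero.
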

\begin{proof}[Proof of  Lemma \thv(2.2.prop2)]
(i) As in the proof of Proposition 2.5 in  \cite{ABH}, from which we have been inspired, the lemma is proved by means of a chaining argument. First, since the process is translation invariant in $h$, we can take $h=0$. Since we can decrease $x$ and $a$, we can also assume without loss of generality that $x$ is an integer ({\it e.g.}, $\lfloor x\rfloor$) and $a\geq 1$. Define the events
\be
\BB_x=\left\{X^{\s}_{0}(k_1,k_2)\leq 0\right\},
\Eq(2.2.prop2.2)
\ee
for $m=0,1,\dots, x-1$
\be
\BB_m=\left\{X^{\s}_{0}(k_1,k_2)\in[x-m-1, x-m]\right\},
\Eq(2.2.prop2.3)
\ee
and for $m=0,1,\dots, x$
\be
\AA_m=\left\{
\max_{\s'\in[\s_n,\s_{n-1})}
\max_{h':0\leq h'\leq 2^{-(k_2\wedge n)}}\left\{X^{\s'}_{h'}(k_1,k_2)-X^{\s}_{0}(k_1,k_2)\right\}\geq m+a
\right\}.
\Eq(2.2.prop2.11)
\ee
Then, the probability appearing on the left-hand side of \eqv(2.2.prop2.1) is bounded above by
\be 
\sum_{m=0}^{x}
\P\left(\BB_m\cap\AA_m
\right)
\Eq(2.2.prop2.4).
\ee
Given $h'\in[0, 2^{-(k_2\wedge n)}]$ and $\s'\in[\s_{n},\s_{n-1})$, consider the sequences of dyadic rationals\footnote{The sequence $\s'_i$ and  $\s_n$ defined in \eqv(2.2.prop2.6) and \eqv(3.lem1.01) respectively should not be confused.}
\be
\begin{split}
h'_i\equiv h'_i(h')&={\textstyle \lceil 2^{i+k_2\wedge n}h'\rceil /2^{i+k_2\wedge n}},\\
\s'_i\equiv \s'_i(\s')&={\textstyle \lceil 2^{i+n+1}\s'\rceil /2^{i+n+1}},\\
\end{split}
\quad i\geq 1,
\Eq(2.2.prop2.5)
\ee
and set $h'_0=0$ and $\s'_0=\s$.
Since $y\leq \lceil y\rceil <y+1$, we have $h'_i\geq 0$, $\s'_j\geq \s$ and
\be
\begin{split}
&\lim_{i\rightarrow\infty}h'_i=h', \quad |h'_{i+1}-h'_{i}|\leq 2^{-(i+k_2\wedge n)} \textrm{ for all $i\geq 0$},\\
&\lim_{i\rightarrow\infty}\s'_i=\s',\quad |\s'_{i+1}-\s'_{i}|\leq 2^{-(i+n+1)} \textrm{ for all $i\geq 0$}.
\end{split}
\Eq(2.2.prop2.6)
\ee
Because the mapping $(\s',h')\mapsto X^{\s'}_{h'}$ is $\P$-almost surely continuous, 
\be
\lim_{i\rightarrow\infty}X^{\s'_{i+1}}_{h'_{i+1}}(k_1,k_2)=X^{\s'}_{h'}(k_1,k_2)\quad \P-a.s.,
\Eq(2.2.prop2.7)
\ee
and this enables us to write the difference  $X^{\s'}_{h'}(k_1,k_2)-X^{\s}_{0}(k_1,k_2)$ as the telescopic series
\be
X^{\s'}_{h'}(k_1,k_2)-X^{\s}_{0}(k_1,k_2)
=
\sum_{i=0}^{\infty}\left(X^{\s'_{i+1}}_{h'_{i+1}}(k_1,k_2)-X^{\s'_{i}}_{h'_{i}}(k_1,k_2)\right)\quad \P-a.s.
\Eq(2.2.prop2.8)
\ee
In what follows we denote by $\O'\subseteq \O$ the set of full measure on which \eqv(2.2.prop2.8) holds.
We next observe as in \cite{ABH} that since
$
\sum_{i=0}^{\infty}\frac{1}{2(i+1)^2}=\frac{\pi^2}{12}<1
$,
we have the inclusion 
\be
\begin{split}
&\left\{X^{\s'}_{h'}(k_1,k_2)-X^{\s}_{0}(k_1,k_2)\geq m+a\right\}
\\
\subseteq 
&
\,\bigcup_{i=0}^{\infty}\left\{
X^{\s'_{i+1}}_{h'_{i+1}}(k_1,k_2)-X^{\s'_{i}}_{h'_{i}}(k_1,k_2)\geq \frac{m+a}{2(i+1)^2}
\right\}.
\end{split}
\Eq(2.2.prop2.9)
\ee
Introducing the discrete sets (see \eqv(2.2.prop2.5))
\be
\begin{split}
\HH_{i+k_2\wedge n} & =\left\{ h'_i(h') : h'\in[0, 2^{-(k_2\wedge n)}]\right\},\\
\SS_{i+n+1} &=\left\{ \s'_i(\s') : \s'\in[\s_{n},\s_{n-1})\right\},
\end{split}
\Eq(2.2.prop2.10)
\ee
and writing $\s_2\sim \s_1$ (respectively, $h_2\sim h_1$)  if and only if $\s_2$ and $\s_1$ (respectively, $h_2$ and $h_1$) are two consecutive points in the dyadic grid, equation \eqv(2.2.prop2.9) implies that on $\O'$,
\be
\AA_m\subseteq
\bigcup_{i=0}^{\infty}
\bigcup_{{h_1,h_2\in\HH_{i+k_2\wedge n}}\atop{ h_2\sim h_1}}
\bigcup_{{\s_1,\s_2\in\SS_{i+n+1}}\atop{\s_2\sim \s_1}}
\left\{
X^{\s_2}_{h_2}(k_1,k_2)-X^{\s_1}_{h_1}(k_1,k_2)\geq \frac{m+a}{2(i+1)^2}
\right\}.
\Eq(2.2.prop2.12)
\ee
Since
\be
\left|\HH_{i+k_2\wedge n}\right|\leq 2^i,\quad 
\left|\SS_{i+n+1}\right|\leq 2^i,
\ee
for each $m=0,1\dots, x$, the summand $\P\left(\BB_m\cap\AA_m\right)$ in \eqv(2.2.prop2.4) is bounded above by
\be 
\sum_{i=0}^{\infty}2^{4i}
\sup_{{h_1\in\HH_{i+k_2\wedge n}}\atop{ h_2\sim h_1}}
\sup_{{\s_1\in\SS_{i+n+1}}\atop{\s_2\sim \s_1}}
\P\left(\BB_m\cap
\left\{
X^{\s_2}_{h_2}(k_1,k_2)-X^{\s_1}_{h_1}(k_1,k_2)\geq \frac{m+a}{2(i+1)^2}
\right\}
\right).
\Eq(2.2.prop2.13)
\ee
By assumption,
$
m+a\leq x+a\leq 2^{2(k_2\wedge n)}
$.
Thus, Lemma \thv(2.2.lem3) can be applied to get
\be
\begin{split}
\P\left(\BB_m\cap\AA_m\right)
\leq
&
\,c\sum_{i=0}^{\infty}2^{4i}\exp\biggl\{
-\frac{(x-m-1)^2}{2\varsigma^2_{k_1,k_2}}
-c'2^i\frac{(m+a)^{3/2}}{(i+1)^3}
\biggr\}
\\
\leq 
&
\,\tilde c\exp\biggl\{
-\frac{(x-m-1)^2}{2\varsigma^2_{k_1,k_2}}-c'(m+a)^{3/2}
\biggr\},
\end{split}
\Eq(2.2.prop2.14)
\ee
where we used in the first line that 
$
0<2^{k_2\wedge n}(|h_2-h_1|+|\s_2-\s_1|)\leq 2^{-i+1}
$
by \eqv(2.2.prop2.6), for any two consecutive elements $\s_2\sim \s_1$ and  $h_2\sim h_1$ of the dyadic grid,
with a halved constant $c'$. The second line corresponds to the $i=0$ term 
in the first line, with a constant $\tilde c>c$. Since
$
(m+a)^{3/2}\geq m^{3/2}+a^{3/2}
$,
substituting \eqv(2.2.prop2.14) in  \eqv(2.2.prop2.4), we finally arrive at
\be
\begin{split}
\sum_{m=0}^{x}\P\left(\BB_m\cap\AA_m\right)
&\leq 
\tilde c\, e^{-c' a^{3/2}}\sum_{m=0}^{x}
\exp\biggl\{
-\frac{(x-m-1)^2}{2\varsigma^2_{k_1,k_2}}-c'm^{3/2}
\biggr\}
\\
&\leq 
\tilde c\, e^{-c'a^{3/2}}\exp\biggl(-\frac{x^2}{2\varsigma^2_{k_1,k_2}}\biggr)
\sum_{m=0}^{x}e^{C(m+1)-c'm^{3/2}}
\\
&\leq 
\hat c\exp
\biggl(-\frac{x^2}{2\varsigma^2_{k_1,k_2}}-c'a^{3/2}\biggr),
\end{split}
\Eq(2.2.prop2.15)
\ee
where we used the assumption that $0<x<C\varsigma^2_{k_1,k_2}$, and where $\hat c>\tilde c$ depends on $C$. The proof of part (i) is now complete.

(ii) We may obviously apply $x=0$ in \eqv(2.2.lem3.1). By symmetry of the statement and of the field itself, \eqv(2.2.lem3.1) implies that
\be
\begin{split}
&
\P\left(\left| X^{\s'}_{h'}(0,k_2)-X^{\s}_{h}(0,k_2)\right|\geq y\right)
\\
\leq & 
c\exp\biggl(
-\frac{c'y^{3/2}}{2^{k_2\wedge n}(|h'-h|+|\sigma'-\sigma|)}
\biggr).
\end{split}
\Eq(2.2.lem3.1new)
\ee
Set
\be
\AA\equiv\left\{
\max_{h,h'\in I}\max_{\s'\in[\s_n,\s_{n-1})}\left[X^{\s}_{h}(0,k_2)-X^{\s'}_{h'}(0,k_2)\right]\geq y
\right\}, \quad y>0.
\ee
This event is nothing else than a symmetrised version of the event $\AA_m$ defined in \eqv(2.2.prop2.11) in the proof of Lemma \thv(2.2.prop2) with $a=y$, $m=0$. Proceeding exactly as in the proof of  \eqv(2.2.prop2.13),
\be
\P\left(\AA\right)
\leq 
\sum_{i=0}^{\infty}2^{4i}
\sup_{{h_1\in\HH_{i+k_2\wedge n}}\atop{ h_2\sim h_1}}
\sup_{{\s_1,\s_2\in\SS_{i+n+1}}\atop{\s_2\sim \s_1}}
\P\left(
\left|X^{\s_2}_{h_2}(0,k_2)-X^{\s_1}_{h_1}(0,k_2)\right|\geq \frac{y}{2(i+1)^2}
\right).
\nonumber
\ee
It then follows from  \eqv(2.2.lem3.1new) that for all $C_0\leq y\leq C2^{2(k_2\wedge n)}$
\be
\P\left(\AA\right)
\leq
c\sum_{i=0}^{\infty}2^{4i}\exp\left\{
-c'2^i
\frac{y^{3/2}}{4(1+i)^3}
\right\}
\leq
\tilde c\exp\left\{-c''y^{3/2}\right\},
\Eq(2.2.lem6.4)
\ee
and finally, the statement follows by letting $k_2$ go to infinity.
The proof of Lemma \thv(2.2.prop2) is  complete.
\end{proof}

We are now ready to give the proof of Lemma \thv(2.2.lem2).

\begin{proof}[Proof of  Lemma \thv(2.2.lem2)]
(i) Let us establish first that given $n\in\N$ and $C>0$, for any $0\leq k_1\leq  k_2-1$ with $k_1<n$, $0<y<C\varsigma^2_{k_1,k_2}$, 
$\s=\s_n$ and $h\in[0,1]$, there exists a constant $c>0$ depending on $C$ such that
\be
\P\left(
\max_{\s'\in[\s_n,\s_{n-1})}\max_{h':|h-h'|\leq 2^{-(k_2\wedge n)}}X^{\s'}_{h'}(k_1,k_2)
>y\right)
\leq 
c\exp\biggl(
-\frac{y^2}{2\varsigma^2_{k_1,k_2}}
\biggr).
\Eq(2.2.lem2.2)
\ee
Clearly, the left-hand side of \eqv(2.2.lem2.2) is bounded above by
\be
\begin{split}
&\P\left(
\max_{\s'\in[\s_n,\s_{n-1})}\max_{h':|h-h'|\leq 2^{-(k_2\wedge n)}}X^{\s'}_{h'}(k_1,k_2)>(y-2)+2, 
X^{\s}_{h}(k_1,k_2)\leq y-2
\right)
\\
&+
\P\left(
X^{\s}_{h}(k_1,k_2)> y-2
\right).
\end{split}
\Eq(2.2.lem2.3)
\ee
Applying Lemma \thv(2.2.prop2) with $x=y-2$ and $a=2$ to bound the first probability in \eqv(2.2.lem2.3), and Lemma \thv(2.2.lem5') to bound the second, we obtain \eqv(2.2.lem2.2).
Recalling the notations \eqv(2.2.1) and \eqv(2.2.1') as well as the expression \eqv(2.2.lem5'.2) of $2\varsigma^2_{k_1,k_2}$, and choosing 
$
C
>\g \frac{2\log 2}{\bar\a_2-\a_1}
$ 
and
$
y=\g n\log 2
$
in \eqv(2.2.lem2.2), we obtain the claim of \eqv(2.2.lem2.1). 

(ii) Similarly, \eqv(2.2.lem2.1') follows from the choice $h=h'$, $k_1=0$ ({\it i.e.}, $\a_1=0$), taking 
$
C
>\g 2\log 2
$ 
and
$y=\g n\log 2
$
in \eqv(2.2.lem2.2).
The proof of Lemma \thv(2.2.lem2) is done.
\end{proof}

\section{Proof of Theorem \thv(th:zeta)}
    \TH(S3)

Recall from the discussion following Lemma \ref{le:monday} that it is enough to consider only positive moments and instead of the full derivative just $\exp(\sum_{p\in\PP}U_pp^{-s})$ and the process \eqv(1.1). Thus, given $\b>0$, define
\be
{\FF}^{\s}(\b)
:=\frac{1}{\log\left[(\s-\frac{1}{2})^{-1}\right]}\log\int_0^1\exp\left(\b X^{\s}_{h}\right)d h, \quad \s>1/2.
\Eq(3.theo1.0)
\ee
Then, Theorem \thv(th:zeta) will be proven if we can prove the following:
\begin{theorem}
    \TH(3.prop1)
$\P$-almost surely, it holds for all $\b>0$ that
\be
\lim_{\s\rightarrow{\frac{1}{2}}^+}{\FF}^{\s}(\b)=f(\b),
\Eq(3.prop1.1)
\ee
where $f(\b)$ is defined in \eqv(1.theo1.3bis). The convergence also takes place  in $L^q(\O,\FF,\P)$ for any $0\leq q<\infty$.
\end{theorem}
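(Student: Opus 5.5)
We prove Theorem~\thv(3.prop1) in three stages. First we show the result along the discrete sequence $\s_n$ of \eqv(3.lem1.01), uniformly over $\s\in[\s_n,\s_{n-1})$. Then we pass to all $\beta>0$ simultaneously. Finally we upgrade to $L^q$.

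Since $\log[(\s-\frac12)^{-1}]=n\log 2+\OO(1)$ on $[\s_n,\s_{n-1})$, it suffices to prove, almost surely,
\[
\int_0^1 e^{\b X^\s_h}\,dh=2^{n f(\b)+o(n)}
\]
uniformly in $\s\in[\s_n,\s_{n-1})$. The two-sided bound will be obtained by Borel--Cantelli over $n$, using probability bounds that are summable in $n$ (exponentially small in $n$). Convexity then handles all $\b$ at once. Each $\FF^\s(\b)$ is convex in $\b$ by Hölder, and $f$ is continuous. So a.s.\ convergence on a countable dense set of $\b$ (say the rationals) gives convergence for all $\b>0$ simultaneously, locally uniformly.

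\textbf{Upper bound.} We split $X^\s_h=X^\s_h(0,k)+X^\s_h(k,\infty)$ with $k=\lceil(1+\delta)n\rceil$, and treat the tail first. For $\log p>2^{(1+\delta)n}$ the weights $p^{-2\s}$ are damped by $e^{-(2\s-1)\log p}$, which is doubly-exponentially small there. So $\sup_{h,\s}|X^\s_h(k,\infty)|$ is a.s.\ $\OO(1)$ for large $n$. This follows from Lemma~\thv(2.2.prop2)(ii) or a direct chaining as in Lemma~\thv(2.2.lem3), whose variance is $\OO(2^{-(n-k)})$ by Lemma~\thv(2.1.lem2).

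For the main part we cover $[0,1]$ by $2^n$ intervals of length $2^{-n}$. Lemma~\thv(2.2.lem2)(i) with $\a_1=0$, $\a_2=1$ gives that the maximum over an interval (and over $\s$) exceeds $\g n\log 2$ with probability at most $C2^{-n\g^2}$. Then the expected number of intervals at level $\approx\g$ is at most $2^{n(1-\g^2)}$. We take a finite grid of levels $\g_j$ and apply Markov's inequality with an extra factor $2^{\e n}$, which is summable in $n$. This gives a.s.
\[
\int_0^1 e^{\b X}\,dh\le \sum_j 2^{n(1-\g_j^2)+\e n}\,2^{-n}\,2^{\b\g_{j+1}n}.
\]
In addition, no level above $\g=1+\e$ occurs at all, which is again Borel--Cantelli, since $2^n\cdot 2^{-n(1+\e)^2}$ is summable. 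Optimizing $\max_{\g\in[0,1]}(\b\g-\g^2)$ yields $f(\b)$ after rescaling by $\log 2$ (the variance is $\frac n2\log2$, which matches \eqv(REM.1)).

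\textbf{Lower bound.} This is the main obstacle. Because of convexity it suffices to treat $\b<2$: for $\b\ge2$ the bound follows from the maximum, by taking one interval where $X\ge(1-\e)n\log2$ and using continuity at scale $2^{-n}$ via Lemma~\thv(2.2.prop2)(ii). Fix $\g=\b/2$ and count the intervals $I$ of length $2^{-n}$ whose centre $h_I$ satisfies $X^{\s}_{h_I}(\e n,n)\ge\g n\log2$. We use the Kistler multiscale second moment method on this count $N$. The mean of $N$ comes from Lemma~\thv(2.2.lem1)(i), and the second moment from Lemma~\thv(2.2.lem1)(ii)--(iii), splitting the pairs by their branching scale $h\curlyvee h'$. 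Paley--Zygmund then gives $N\ge 2^{n(1-\g^2)-\e' n}$ with probability bounded below, not close to $1$.

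To reach a.s.\ statements we exploit independence across scales. The first block $X(0,\e n)$ is handled by a crude bound: Lemma~\thv(2.2.lem2)(ii) shows $X(0,\e n)\ge -\e' n$ uniformly, with summable failure probability. We then boost the probability by using many disjoint subintervals, on which the fields $X(\e n,n)$ are nearly independent by part (ii). This makes the failure probability $e^{-2^{cn}}$, which is summable. On each counted interval the oscillation is controlled by Lemma~\thv(2.2.prop2)(ii), and the tail beyond scale $n$ is bounded below by $-\OO(1)$ as before. Together these give a.s.
\[
\int_0^1 e^{\b X}\,dh\ge 2^{-n}\,2^{n(1-\g^2)}\,2^{\b\g n-o(n)}=2^{n\b^2/4-o(n)}.
\]
Uniformity in $\s\in[\s_n,\s_{n-1})$ is obtained by working at $\s_n$ and transferring the bound via Lemma~\thv(2.2.prop2)(ii).

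\textbf{$L^q$ convergence.} We show uniform integrability of $|\FF^\s(\b)|^q$. Upward deviations are controlled by Jensen and the exponential moment bound \eqv(2.2.lem5'.7): $\E\int_0^1 e^{\b X}\,dh\le 2^{n\b^2/4+\OO(1)}$. Moreover, $\E\big(\int_0^1 e^{\b X}\,dh\big)^{-t}$ is bounded by Jensen applied to $e^{-\b X}$. Hence all moments of $\FF^\s$ are bounded, and this together with a.s.\ convergence yields convergence in $L^q$.
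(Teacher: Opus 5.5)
Your upper bound, the extension to all $\b>0$ by convexity, the transfer from $\s_n$ to $[\s_n,\s_{n-1})$ via Lemma~\ref{2.2.prop2}(ii), and the uniform-integrability argument for $L^q$ are essentially the paper's. Counting boxes of length $2^{-n}$ is equivalent to its Markov--Fubini bound on $\Leb\{h:X^\s_h\ge\g n\log2\}$.

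The gap is in the lower bound for $\b\in(\sqrt2,2)$. Your $N$ counts boxes whose centre satisfies the \emph{single} threshold event $\{X_h(\e n,n)\ge\g n\log2\}$. Lemma~\ref{2.2.lem1}(ii)--(iii) give nothing useful for such a block on the roughly $2^{2n-\kappa n}$ pairs with $h\curlyvee h'=\kappa n\in(\e n,n)$. Part (ii) needs $h\curlyvee h'<k_1$, and the trivial bound (iii) is far too lossy unless $h\curlyvee h'\ge k_2$. For those pairs the joint event is realized most cheaply by an overshoot of the common part. A Gaussian computation gives, to leading order for small $\e$, $\P(\text{both})\approx2^{-2\g^2n/(1+\kappa)}$ against $\P(\text{one})^2\approx2^{-2\g^2n}$, so that
\[
\frac{\E N^2}{(\E N)^2}\gtrsim 2^{\,n\max_{\kappa\in[0,1]}\{-\kappa+2\g^2\kappa/(1+\kappa)\}}=2^{\,n(\sqrt2\g-1)^2}\qquad(\g>1/\sqrt2).
\]
Paley--Zygmund then yields only an exponentially small success probability, not one bounded below. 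This is the familiar breakdown of the naive second moment outside the $L^2$ phase $\b<\sqrt2$; for $\g<1/\sqrt2$, by contrast, the ratio already tends to $1$.

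The remedy is the actual content of Kistler's method, which you name but do not implement. Split $(\lceil n/K\rceil,\infty)$ into $K-1$ blocks and keep only those $h$ for which \emph{every} block increment exceeds $(1+\d)\g n\log2/K$; these are the events $\JJ^{\s_n}_h(m)$ of \eqref{3.lem2.9}. For a pair branching in block $r$, apply (iii) to the blocks $m\le r$ and (ii) to the blocks $m>r$. Overshooting in the common blocks then no longer helps, and these pairs contribute about $2^{-n(r-1)(1-\g^2(1+\d)^2)/K}(\E Z)^2$.

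With the coarse-grained count one gets $\E Z^2\le(1+\OO(\sqrt n\,2^{-cn}))(\E Z)^2$. The reason is that pairs at distance $>2^{-n/(2K)}$ carry almost all the mass and decorrelate by (ii). Paley--Zygmund therefore gives failure probability $\OO(2^{-c'n})$, which is summable, so no boosting is needed.

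Your boosting step would not be justified as stated in any case. Lemma~\ref{2.2.lem1}(ii) is a two-point estimate, and the counts on disjoint subintervals are all functions of the same $(U_p)$. Their failure probabilities therefore cannot be multiplied to produce $e^{-2^{cn}}$. The only available way to exploit decorrelation at large distances is inside the global second moment, as in the paper's domain $\DD_1$.

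Your alternative route for $\b\ge2$ (one box with $X\ge(1-\e)n\log2$) relies on this same second-moment step at $\g\uparrow1$. The convexity route for $\b\ge2$ is fine once $\b<2$ is settled.

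A minor point: uniformity in $h$ of $X(0,\e n)\ge-\e'n$ requires Lemma~\ref{2.2.lem2}(i) together with a union bound over $2^{\e n}$ intervals, with $\e'$ a suitable multiple of $\e$. Part (ii) alone does not give it.
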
  
The proof of Theorem \thv(3.prop1) is inspired from  that of Proposition 2 in \cite{AT19}, in which a free energy-like convergence result is obtained for the truncated  $\s=1/2$ process, in expectation with respect to the law $\P$ of the process.

The strategy of the proof is, in fact, adapted from a classical scheme dating back to REM \cite{De1, OP84}. It consists in dividing the range of the process $X^{\s}_{h}$ into intervals, and  treating the contribution of each interval to the integral in \eqv(3.theo1.0) separately. This requires precise estimates of the number of points of  $X^{\s}_{h}$ that lie in a given interval.
Kistler's multiscale second moment method is then used to deal with the correlations of the process \cite{K15}.
These estimates are collected in Section \thv(S3.1). 
The proof of Theorem \thv(3.prop1) is finally given in  Section \thv(S3.2).

\subsection{Preparatory tools}
    \TH(S3.1)
The first result of this section is an \emph{a priori} almost sure bound on the maximum over $h$ of the sequence of processes $X^{\s_n}_{h}$ obtained by taking $\s=\s_n$ in \eqv(1.1).

Given $\varepsilon^{\star}>0$ and $\s>1/2$, define the sequence of events
\be
\O^\star_n(\varepsilon^{\star})=\bigcap_{\s\in[\s_{n},\s_{n-1})}\left\{\max_{h\in[0,1]}X^{\s}_{h}< (1+\varepsilon^{\star}) n\log 2\right\},\quad n\geq 1,
\Eq(3.lem1.1')
\ee
and set
\be
\O^\star(\varepsilon^{\star})=\bigcup_{n_0\geq 1}\bigcap_{n\geq n_0}\O^\star_n(\varepsilon^{\star}),
\Eq(3.lem1.1")
\ee
and
\be
\O^\star=\bigcap_{\varepsilon^{\star}>0}\O^\star(\varepsilon^{\star}).
\Eq(3.lem1.2)
\ee
\begin{lemma}
    \TH(3.lem1)
 $\P\left(\O^\star\right)=1$.
\end{lemma}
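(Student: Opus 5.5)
The plan is to prove that for each fixed $\varepsilon^\star>0$ one has $\P(\O^\star(\varepsilon^\star))=1$. Since the events $\O^\star(\varepsilon^\star)$ increase with $\varepsilon^\star$, the intersection in \eqv(3.lem1.2) may be taken over the countable family $\varepsilon^\star=1/j$, $j\in\N$, which then gives $\P(\O^\star)=1$. By Borel--Cantelli it suffices to show that $\sum_n \P\big((\O^\star_n(\varepsilon^\star))^c\big)<\infty$. So everything reduces to a summable tail bound for $\max_{\s\in[\s_n,\s_{n-1})}\max_{h\in[0,1]}X^\s_h$ at level $(1+\varepsilon^\star)n\log 2$.

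To get this bound, split the process as $X^\s_h=X^\s_h(0,k_2)+X^\s_h(k_2,\infty)$, with $k_2=n$, i.e. $\a_2=1$ in \eqv(2.2.1). The high-frequency tail $X^\s_h(n,\infty)$ should be handled with Lemma \thv(2.2.lem2)(i) applied with $\a_1=1$. That case is formally excluded there, since $\a_1<1$ is required. So instead I would choose $k_2=\lceil(1-\delta)n\rceil$ for a small $\delta>0$ and treat $X^\s_h(k_2,\infty)$ as a piece with $\a_1=1-\delta$ and $\a_2=\infty$, so that $\bar\a_2-\a_1=\delta$. Lemma \thv(2.2.lem2)(i) then bounds the probability that this piece exceeds $\eta n\log 2$ on one $h$-window of length $2^{-n}$ by $C2^{-n\eta^2/\delta}$.

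Next, cover $[0,1]$ by $2^n$ intervals of length $2^{-n}$, centred at points $h_j$. On each interval the maximum over $\s\in[\s_n,\s_{n-1})$ and over $h'$ is controlled as follows. For the low-frequency piece $X^{\s}(0,k_2)$, apply Lemma \thv(2.2.lem2)(i) with $\a_1=0$ and $\a_2=1-\delta$. This requires windows of size $2^{-k_2}\ge 2^{-n}$, which is fine because a window of length $2^{-n}$ lies inside one. It gives the bound $C2^{-n\g_1^2/(1-\delta)}$ at level $\g_1 n\log2$. For the high-frequency piece, use the bound above at level $\eta n\log 2$. Take $\g_1+\eta=1+\varepsilon^\star$. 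A union bound over the $2^n$ windows then gives
\[
\P\big((\O^\star_n(\varepsilon^\star))^c\big)\le C2^{n}\big(2^{-n\g_1^2/(1-\delta)}+2^{-n\eta^2/\delta}\big).
\]
Choose $\g_1=1+\varepsilon^\star/2$ and $\eta=\varepsilon^\star/2$. Then $\g_1^2/(1-\delta)>1$ for every $\delta$, and $\eta^2/\delta>1$ once $\delta<(\varepsilon^\star)^2/4$. So both terms decay geometrically in $n$, and the series is summable.

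The main obstacle is making sure that Lemma \thv(2.2.lem2)(i) really applies uniformly in $\s$ over $[\s_n,\s_{n-1})$ with $\a_2=\infty$. In that case $\bar\a_2=1$ and the window size is $2^{-n}$, which is exactly what is needed. One also has to check that the constants depend only on $\a_1,\bar\a_2,\g$ and not on $n$. Both are asserted in the lemma. A further point is that $X^\s_h$ is the full series, so one has to justify passing to $k_2=\infty$. Here I would rely on the almost sure convergence from Lemma \ref{le:monday} together with the fact that the bounds are uniform in $k_2$, since \eqv(2.2.lem5'.2) saturates at $k_2\wedge n$.
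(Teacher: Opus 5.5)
Your argument is correct and is essentially the paper's proof. Both cover $[0,1]$ by $2^n$ windows of length $2^{-n}$, apply Lemma~\ref{2.2.lem2}(i) uniformly in $\sigma\in[\sigma_n,\sigma_{n-1})$, take a union bound, then use Borel--Cantelli and a countable intersection over $\varepsilon^\star$.

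The only difference is that your split at $k_2=\lceil(1-\delta)n\rceil$ is unnecessary. Lemma~\ref{2.2.lem2}(i) already allows $\alpha_1=0$ and $\alpha_2=\infty$ (so $\bar\alpha_2=1$), so the excluded case $\alpha_1=1$ never arises. Applied with $\gamma=1+\varepsilon^\star$ directly to the full sum $X^\sigma_h=X^\sigma_h(0,\infty)$, it gives $\mathbb{P}\bigl([\Omega^\star_n(\varepsilon^\star)]^c\bigr)\le C\,2^{n}\,2^{-n(1+\varepsilon^\star)^2}$ in one step, which is exactly what the paper does.
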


\begin{proof}[Proof of Lemma \thv(3.lem1)]  
We start by proving that for all sufficiently large $n$,
\be
\P\left([\O^\star_n(\varepsilon^{\star})]^c\right)\leq  C 2^{-2n\varepsilon^{\star}}
\Eq(3.lem1.4)
\ee
for some constant $C>0$ where, $[\O^\star_n(\varepsilon^{\star})]^c$ denotes the complement of $\O^\star_n(\varepsilon^{\star})$ in $\O$, namely,
\be
[\O^\star_n(\varepsilon^{\star})]^c
= 
\left\{\max_{h\in[0,1]}\max_{\s\in[\s_{n},\s_{n-1})}X^{\s}_{h}\geq (1+\varepsilon^{\star}) n\log 2\right\}.
\Eq(3.lem1.4')
\ee
The claim \eqv(3.lem1.4) will then follow from Lemma \thv(2.2.lem2) with $k_1=0$,  $k_2=\infty$, $\g=1+\varepsilon^{\star}$, 
and a suitable covering of the interval $[0,1]$. Specifically, we take $[0,1]= \cup_{1\leq \ell\leq 2^n}I_\ell$, where $I_\ell= [(\ell-1)2^{-n}, \ell 2^{-n}]$ for all $1\leq \ell\leq 2^n$. Then,
\be
\begin{split}
\P\left([\O^\star_n(\varepsilon^{\star})]^c\right)
&\leq 
\sum_{\ell=1}^{2^n}\P\left(\max_{h\in I_{\ell}}\max_{\s\in[\s_{n},\s_{n-1})}X^{\s}_{h}\geq (1+\varepsilon^{\star}) n\log 2\right)
\\
&\leq 
C 2^{-n[(1+\varepsilon^{\star})^2-1]},
\end{split}
\Eq(3.lem1.5)
\ee
where the first inequality is Boole's inequality and the second is \eqv(2.2.lem2.1) with 
$\g=1+\varepsilon^{\star}$, 
$\a_1=0$ and  $\bar\a_2=1$. This gives \eqv(3.lem1.4). Since
$
\sum_{n=1}^{\infty}\P\left(\O^c_n(\varepsilon^{\star})\right)<\infty
$
for all $\varepsilon^{\star}>0$, the Borel-Cantelli lemma ensures that
\be
\P\left(\O^\star(\varepsilon^{\star})\right)=1.
\ee
The claim of Lemma \thv(3.lem1) then follows in a classical way from the observation that 
$\O^\star(\varepsilon^{\star})$ is a decreasing family as $\varepsilon^{\star}\downarrow 0$.
\end{proof}

For  $\g\geq 0$ and $\s>1/2$ we introduce the sequence of Lebesgue measures of the set of $\g$-high points
\be
\MM^{\s}_n(\g)=\Leb\left\{h\in[0,1] : X^{\s}_{h}\geq \g n\log2\right\}, \quad n\geq 0,
\Eq(3.lem2.2)
\ee
and its normalised log-measure 
\be
\EE^{\s}_n(\g)
=\frac{\log \left(\MM^{\s}_n(\g)\right)}{n\log 2}.
\Eq(3.lem2.1)
\ee
We also set 
\be
\EE(\g)=-\g^2.
\Eq(3.lem2.3)
\ee
Given $\varepsilon>0$, we  next define the sequences of events, for $n\geq 0$,
\be
\begin{split}
\O^{+}_n(\varepsilon,\g)&=\bigcap_{\s\in[\s_{n},\s_{n-1})}\left\{\EE^{\s}_n(\g)-\EE(\g)\leq+\varepsilon \right\},\\
\O^{-}_n(\varepsilon,\g)&=\bigcap_{\s\in[\s_{n},\s_{n-1})}\left\{\EE^{\s}_n(\g)-\EE(\g)\geq-\varepsilon \right\},
\end{split}
\Eq(3.prop2.1)
\ee
and we set
\be
\O^{\pm}(\varepsilon,\g)=\bigcup_{n_0\geq 1}\bigcap_{n\geq n_0}\O^{\pm}_n(\varepsilon,\g),
\Eq(3.prop2.3)
\ee
and
\be
\O^{\pm}(\g)=\bigcap_{\varepsilon>0}\O^{\pm}(\varepsilon,\g).
\Eq(3.prop2.4)
\ee

The next proposition is the central result of this section and the lemma that follows is the key ingredient of its proof.
\begin{proposition}
    \TH(3.prop2)
    
\item (i) For all $\g>0$
\be
\P\left(\O^{+}(\g)\right)=1.
\Eq(3.prop2.5)
\ee
\item (ii) For all  $0<\g<1$ 
\be
\P\left(\O^{-}(\g)\right)=1.
\Eq(3.prop2.6)
\ee
\end{proposition}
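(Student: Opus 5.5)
Part (i) is a first moment bound combined with a uniform control in $\s$. Part (ii) is a second moment (Kistler multiscale) argument combined with a chaining control in $\s$. In both parts the structure is: prove a summable bound on $\P([\O^{\pm}_n(\varepsilon,\g)]^c)$, apply Borel--Cantelli for each fixed $\varepsilon$, then intersect over a countable sequence $\varepsilon\downarrow 0$ (the events decrease in $\varepsilon$).

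\emph{Part (i).} The difficulty is the intersection over all $\s\in[\s_n,\s_{n-1})$.
\begin{itemize}
\item First discretise $h$. Cover $[0,1]$ by the intervals $I_\ell=[(\ell-1)2^{-n},\ell 2^{-n}]$. If some $h\in I_\ell$ has $X^\s_h\geq\g n\log 2$ for some $\s$, then $\max_{\s'}\max_{h'\in I_\ell}X^{\s'}_{h'}\geq \g n\log 2$.
\item Hence, uniformly in $\s$,
\[
\MM^\s_n(\g)\leq 2^{-n}\,\#\Big\{\ell:\max_{\s'\in[\s_n,\s_{n-1})}\max_{h'\in I_\ell}X^{\s'}_{h'}\geq \g n\log 2\Big\}=:2^{-n}N_n.
\]
\item By Lemma \thv(2.2.lem2)(i) with $\a_1=0$ and $\bar\a_2=1$, we get $\E N_n\leq C2^{n}2^{-n\g^2}$.
\item Markov's inequality then gives $\P(2^{-n}N_n\geq 2^{n(-\g^2+\varepsilon)})\leq C2^{-n\varepsilon}$, which is summable.
\end{itemize}

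\emph{Part (ii).} Fix $0<\g<1$ and follow the multiscale second moment method.
\begin{itemize}
\item Choose a small $\d>0$ and set $\a_1=\d$, $\a_2=1$. On the scales $k\leq\d n$ the field $X^\s_h(0,k_1)$ is small. By Lemma \thv(2.2.lem2)(ii) with $\a_2=\d$ and a union bound over $2^{\d n}$ intervals of length $2^{-\d n}$, we have $\min_h X^\s_h(0,k_1)\geq -\eta n$ with $\eta\approx\sqrt{\d}$, outside a summable event.
\item The tail $X^\s_h(n,\infty)$ has $O(1)$ variance. Handle it through Lemma \thv(2.2.lem2)(i) with $\a_1=1$ (or via a Chebyshev bound on the Lebesgue measure where it is $\leq -\eta n$, using Lemma \thv(2.2.lem5')).
\item It then suffices to lower bound $\MM'=\Leb\{h:X^\s_h(k_1,n)\geq(\g+2\eta)n\log 2\}$.
\item By Lemma \thv(2.2.lem1)(i), $\E\MM'=2^{-n(\g+2\eta)^2/(1-\d)+o(n)}$.
\item For the second moment, split $\E\MM'^2=\int\!\!\int\P(\cdot,\cdot)\,dh\,dh'$ according to $h\curlyvee h'$.
\begin{itemize}
\item For $h\curlyvee h'<k_1-\e n$, Lemma \thv(2.2.lem1)(ii) gives $(1+O(n2^{-\e n}))(\E\MM')^2$.
\item The pairs with $h\curlyvee h'\geq k_1-\e n$ have measure $\lesssim 2^{-(\d-\e)n}$. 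Lemma \thv(2.2.lem1)(iii) bounds their contribution by $2^{-(\d-\e)n}\E\MM'$. This is $\ll(\E\MM')^2$ provided $(\g+2\eta)^2/(1-\d)<\d-\e$.
\end{itemize}
\end{itemize}

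The last condition fails for $\g$ near 1 with a single level $k_1$, and this is the main obstacle. The remedy is Kistler's refinement: decompose $[k_1,n]$ into $K$ blocks of length $n/K$. Require the walk to gain $\g n\log2/K$ in each block, so that block $j$ contributes branching cost consistently; the correlated pairs branching at block $j$ are then compensated by the already-paid conditional probability of the preceding blocks. Summing over $j$ gives $\E\MM'^2\leq(1+o(1))(\E\MM')^2$ for all $\g<1$ once $K$ is large.

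Paley--Zygmund then gives $\P(\MM'\geq\frac12\E\MM')\to1$. This is not yet summable, and I expect this to be the second delicate point. I would boost it in one of two ways: either use independence across many disjoint subintervals of $h$ at scale $2^{-k_1}$ (approximately independent blocks, giving a failure probability $e^{-c2^{\e n}}$), or take a Chebyshev bound with variance ratio $O(2^{-\e n})$ coming from the refined estimate in Lemma \thv(2.2.lem1)(ii).

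Finally, uniformity over $\s\in[\s_n,\s_{n-1})$ follows from Lemma \thv(2.2.prop2)(ii). On each $2^{-n}$-interval the field varies over $\s$ by at most $\e n$ except on an event of probability $e^{-c(\e n)^{3/2}}$. Summing over the $2^n$ intervals, this converts a bound at $\s=\s_n$ with $\g+\e$ into one valid for all $\s$, up to the shift in $\g$.
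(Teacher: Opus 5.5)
Your proposal is correct and follows essentially the paper's route. For (i) you use a first-moment/Markov bound with Borel--Cantelli; you discretize $h$ at scale $2^{-n}$ and invoke Lemma~\ref{2.2.lem2}(i), whereas the paper uses the pointwise maximum over $\s$, Fubini and Lemma~\ref{2.2.lem2}(ii). For (ii) you use Kistler's $K$-block coarse-graining at $\s=\s_n$, then transfer to all $\s\in[\s_n,\s_{n-1})$ via the chaining bound of Lemma~\ref{2.2.prop2}(ii), exactly as the paper does.

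Your ``second delicate point'' is not actually an obstacle. The block-by-block second-moment computation gives $\E[(\MM')^2]\le(1+\rho_n)(\E\MM')^2$ with $\rho_n$ exponentially small in $n$, so Chebyshev (equivalently Paley--Zygmund with a vanishing threshold, as in the paper) is already summable. Your alternative via independence across $h$-subintervals is not available, since all $h$ share the same $U_p$'s.
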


\begin{lemma}
    \TH(3.lem2')
The following holds for all $n$ sufficiently large. 
\item (i) For all $\g>0$ and  all $\varepsilon>0$, there exists a constant $0<C<\infty$ such that
\be
\P\left([\O^{+}_n(\varepsilon,\g)]^c\right)\leq C  2^{-n\varepsilon}.
\Eq(3.lem2'.4)
\ee
\item (ii) For all  $0<\g<1$ and all $\varepsilon>0$, there exists a constant $0<C<\infty$ and a constant $c(\g,\varepsilon)>0$ that depends only on $\g$ and $\varepsilon$, such that
\be
\P\left[
\EE^{\s_n}_n(\g)-\EE(\g)\leq-\varepsilon 
\right]
\leq  C 2^{-nc(\g,\varepsilon)}.
\Eq(3.lem2'.4bis)
\ee
\item (iii)  There exist constants $0<C_1(\eta),C_2(\eta)>0$ such that for all $\eta>0$
\be
\P\left(\max_{h\in [0,1]}\max_{\s\in[\s_{n},\s_{n-1})}\left|X^{\s_n}_{h}-X^{\s}_{h}\right|\geq\eta n\right)
\leq 
C_1(\eta)e^{-C_2(\eta)n}.
\Eq(3.lem2'.01)
\ee
\end{lemma}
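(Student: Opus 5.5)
Part (iii) will be handled first, since it is used to pass from the fixed abscissa $\s_n$ to the whole block $[\s_n,\s_{n-1})$ in parts (i) and (ii). Cover $[0,1]$ by the $2^n$ intervals $I_\ell=[(\ell-1)2^{-n},\ell2^{-n}]$. For each $\ell$, the event $\max_{h\in I_\ell}\max_{\s}|X^{\s_n}_h-X^{\s}_h|\geq \eta n$ is contained in the event of Lemma \thv(2.2.prop2)(ii) with $I=I_\ell$ and $y=\eta n$, which lies in the admissible range $C_0\le y\le C2^{2n}$ for $n$ large. That lemma gives probability at most $c\exp(-c'(\eta n)^{3/2})$ for each $\ell$. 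A union bound over the $2^n$ intervals then gives $2^n c\, e^{-c'\eta^{3/2}n^{3/2}}\le C_1(\eta)e^{-C_2(\eta)n}$, because the $n^{3/2}$ in the exponent beats $n\log 2$.

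For part (i), I use Markov's inequality on the measure. By Fubini and Lemma \thv(2.2.lem1)(i) with $\a_1\downarrow0$, or more directly Lemma \thv(2.2.lem5') with $k_1=0$ and $k_2=\infty$ (variance $\tfrac12 n\log2+O(1)$), we get $\E\,\MM^{\s}_n(\g)\le \P(X^\s_0\ge\g n\log2)\le C2^{-\g^2 n}$. Uniformity over $\s\in[\s_n,\s_{n-1})$ is the only delicate point. I plan to handle it by controlling $\MM^\s_n(\g)\le \MM^{\s_n}_n(\g-\eta)$ on the complement of the event in (iii), with $\eta$ small relative to $\varepsilon$. Markov then gives
\[
\P\bigl(\MM^{\s_n}_n(\g-\eta)\ge 2^{n(-\g^2+\varepsilon)}\bigr)\le C2^{-n(\varepsilon-O(\eta))},
\]
and adding the exponentially small bound from (iii) yields \eqv(3.lem2'.4), after renaming $\varepsilon$. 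Alternatively, Lemma \thv(2.2.lem2)(ii), which already contains the maximum over $\s$, gives the first-moment bound directly; I would use whichever is cleaner.

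Part (ii) is the main obstacle and needs a second moment argument in Kistler's multiscale form. Fix $\g<1$ and a small $\a_1>0$. Split $X^{\s_n}_h=X^{\s_n}_h(0,k_1)+X^{\s_n}_h(k_1,\infty)$. The coarse part is at least $-\delta n$ for most $h$ with overwhelming probability; this follows from Lemma \thv(2.2.lem2)(ii) applied to $-X$, using symmetry of the law, on a grid of mesh $2^{-k_1}$ with a union bound. It therefore suffices to lower-bound
\[
\MM'=\Leb\{h: X^{\s_n}_h(k_1,\infty)\ge (\g+\delta)n\log2\}.
\]
By Lemma \thv(2.2.lem1)(i) with $\bar\a_2=1$, $\E\MM'\approx 2^{-n(\g+\delta)^2/(1-\a_1)}$ up to polynomial factors. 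For $\E\MM'^2$, split the pairs $(h,h')$ according to $h\curlyvee h'$. When $h\curlyvee h'\le k_1-\e n$, Lemma \thv(2.2.lem1)(ii) gives the factorised estimate $(1+O(n2^{-\e n}))(\E\MM')^2$. The remaining pairs have measure at most $2^{-k_1+\e n}$, and Lemma \thv(2.2.lem1)(iii) bounds their contribution by $2^{-(\a_1-\e)n}\E\MM'$. This contribution is $o((\E\MM')^2)$ provided $\a_1-\e>(\g+\delta)^2/(1-\a_1)$. Here I need to handle the regime correctly: the multiscale method should really split at the level $\a_1$ where the walk reaches height $\g$ along its typical path. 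Concretely, I would choose $\a_1$ slightly above $\g^2$ (possible since $\g<1$) and decompose into two independent pieces, requiring $X(0,k_1)\ge(\g-\delta')\,\cdot$(share) on a $2^{-k_1}$-box and $X(k_1,\infty)$ high. Then Paley--Zygmund, or Chebyshev with the relative variance bounded by $2^{-c n}$, gives $\P(\MM'\le 2^{-n\varepsilon}\E\MM')\le C2^{-nc(\g,\varepsilon)}$, which is \eqv(3.lem2'.4bis). Calibrating $\a_1$ so that the off-diagonal term is exponentially small is the step I expect to require the most care.
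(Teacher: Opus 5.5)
Parts (i) and (iii) are fine; the genuine gap is in part (ii). Your part (iii) is the paper's argument. For (i), the ``alternative'' you mention is exactly what the paper does: Markov and Fubini applied to $\Leb\{h:\max_{\sigma}X^{\sigma}_h\ge\gamma n\log 2\}$, with Lemma~\ref{2.2.lem2}(ii) at $k_1=0$, $k_2=\infty$. It gives $C2^{-n\varepsilon}$ exactly. The detour through (iii) only gives $C2^{-n(\varepsilon-O(\eta))}+C_1e^{-C_2n}$. That is enough for Borel--Cantelli, but it is not the stated bound: $\varepsilon$ is fixed by the event, so it cannot be ``renamed''.

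\textbf{Why your two-level scheme fails.} Your first scheme is self-contradictory. The first moment $\E\MM'\approx 2^{-n(\gamma+\delta)^2/(1-\alpha_1)}$ is $\ge 2^{-n(\gamma^2+\varepsilon)}$ only if $\alpha_1\lesssim\varepsilon/\gamma^2$. Your off-diagonal condition, however, forces $\alpha_1(1-\alpha_1)>\gamma^2$, which is impossible once $\gamma\ge 1/2$.

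Your repair uses two constrained pieces, $X_h(0,k_1)\gtrsim\gamma\alpha_1 n\log 2$ and $X_h(k_1,\infty)\gtrsim\gamma(1-\alpha_1)n\log 2$, with $\alpha_1\gtrsim\gamma^2$. This only controls pairs $h,h'$ lying in the same $2^{-k_1}$-box.

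Now take pairs with $|h-h'|\approx 2^{-\beta n}$ and $0<\beta<\alpha_1$. The coarse pieces $X_h(0,k_1)$ and $X_{h'}(0,k_1)$ share all increments up to scale $\beta n$. Lemma~\ref{2.2.lem1}(ii) does not decorrelate them, since it needs $h\curlyvee h'$ below the \emph{lower} index of the sum, which here is $0$. The coarse piece is constrained only at its endpoint, so the cheapest way for both points to succeed is for the common part to overshoot the linear path. A Gaussian computation then gives a relative second-moment contribution of order $2^{n\max_{0<\beta<\alpha_1}\{-\beta+2\gamma^2\alpha_1\beta/(\alpha_1+\beta)\}}=2^{n(\sqrt2\gamma-1)^2\alpha_1}$ as soon as $\gamma>1/\sqrt2$. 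Hence $\E Z^2/(\E Z)^2$ grows exponentially for every $\alpha_1>0$, and Paley--Zygmund gives nothing.

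There is a second problem even for small $\gamma$. A constrained piece starting at scale $0$ has $O(1)$ covariances between macroscopically separated points, so $\E Z^2/(\E Z)^2\not\to 1$. You would only get a probability bounded below by a constant, not $1-C2^{-nc(\gamma,\varepsilon)}$. In fact the coarse piece is just the original problem at scale $\alpha_1 n$ with the same slope $\gamma$, so no calibration of a two-level split can work.

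\textbf{What is missing.} You need the genuinely multiscale truncation (Kistler's method) that the paper uses:
\begin{itemize}
\item Cut the scales into $K$ blocks of length $n/K$.
\item Discard the first block, and control it separately by Markov's inequality on $\Leb\{h: X^{\sigma_n}_h(0,\lceil n/K\rceil)\le -\delta_K\gamma n\log 2\}$, using Lemma~\ref{2.2.lem2}(ii) and symmetry.
\item Require every remaining block increment to exceed $(1+\delta)\gamma n\log 2/K$.
\end{itemize}
For pairs branching in block $r$, use the trivial bound on blocks $2,\dots,r$ and near-independence (Lemma~\ref{2.2.lem1}(ii)) on blocks $r+1,\dots,K$. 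This gives a relative contribution of order $2^{-(r-1)\frac{n}{K}(1-\gamma^2(1+\delta)^2)}$, which is exponentially small exactly when $\gamma(1+\delta)<1$.

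Each ingredient fixes one of the failures above. The block-by-block thresholds rule out the overshoot. Discarding the first block removes the macroscopic correlations. $K$ must be taken large, depending on $\gamma$ and $\varepsilon$, so that the discarded block is unlikely to be as low as $-\delta_K\gamma n\log 2$.
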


We begin by proving  Proposition \thv(3.prop2) assuming Lemma \thv(3.lem2') and then prove Lemma \thv(3.lem2').

\begin{proof}[Proof of Proposition \thv(3.prop2)]  
On the one hand, by part (i) of Lemma \thv(3.lem2'), for all $\varepsilon>0$ and all $\g>0$,
\be
\textstyle
\sum_{n=1}^{\infty}\P\left(\left[\O^{+}_n(\varepsilon,\g)\right]^c\right)<\infty.
\ee
Thus, the Borel-Cantelli lemma ensures that
\be
\P\left(\O^{+}(\varepsilon,\g)\right)=1.
\ee
On the other hand, by Borel-Cantelli lemma and part (ii) of Lemma \thv(3.lem2'), for all $\varepsilon>0$ and all $0<\g<1$, with $\P$-probability one, there exists a random $n_0<\infty$, such that for all $n\geq n_0$, we have 
\be
\EE^{\s_n}_n(\g)\geq \EE(\g)-\varepsilon.
\ee
Hence, by part (iii) of Lemma \thv(3.lem2') with $\eta=\varepsilon$, for all $\s\in[\s_{n},\s_{n-1})$
\be
\EE^{\s}_n(\g)\geq \EE(\g)-\varepsilon-\frac{\g\varepsilon}{\log 2}.
\ee
Thus, for all $\varepsilon>0$
\be
\P\left(\O^{-}_n\left(\varepsilon\bigl[1+\sfrac{\g}{\log 2}\bigr], \g\right)\right)=1.
\ee

The claim of Proposition \thv(3.prop2) now follows  from the observation that given $\g$,
both $\O^{+}(\varepsilon,\g)$ and $\O^{-}(\varepsilon,\g)$ are decreasing families of sets as $\varepsilon\downarrow 0$.
\end{proof}

\begin{proof}[Proof of Lemma \thv(3.lem2')] Let $n\geq 0$ be given.
We first prove item (i), namely that for all $\g>0$ and all $\varepsilon>0$
\be
\P\left(\max_{\s\in[\s_{n},\s_{n-1})}\MM^{\s}_n(\g)>2^{n[\EE(\g)+\varepsilon]}\right)\leq C  2^{-n\varepsilon},
\Eq(3.lem2.5)
\ee
for some consrant $C>0$. To do this, note first that for all $h\in[0,1]$ and $\s\in[\s_{n},\s_{n-1})$,
\be
\left\{X^{\s}_{h}\geq \g n\log2\right\}\subseteq\left\{\max_{\s\in[\s_{n},\s_{n-1})}X^{\s}_{h}\geq \g n\log2\right\}.
\ee
By this and \eqv(3.lem2.2),
\be
\begin{split}
\max_{\s\in[\s_{n},\s_{n-1})}\MM^{\s}_n(\g)
\leq 
\Leb\left\{h\in[0,1] : \max_{\s\in[\s_{n},\s_{n-1})}X^{\s}_{h}\geq \g n\log2\right\}.
\end{split}
\Eq(3.lem4'.3)
\ee
Therefore,
\be
\begin{split}
&\P\left(\max_{\s\in[\s_{n},\s_{n-1})}\MM^{\s}_n(\g)>2^{n[\EE(\g)+\varepsilon]}\right)
\\
\leq 
&
\P\left(\Leb\left\{h\in[0,1] : \max_{\s\in[\s_{n},\s_{n-1})}X^{\s}_{h}\geq \g n\log2\right\}>2^{n[\EE(\g)+\varepsilon]}\right).
\end{split}
\Eq(3.lem4'.4)
\ee
Using in turn Markov's inequality and Fubini's theorem, the second line of \eqv(3.lem4'.4) is at most
\be
\begin{split}
&
\leq
2^{n(-\EE(\g)-\varepsilon)}\int_{0}^1\P\left(\max_{\s\in[\s_{n},\s_{n-1})} X^{\s}_{h}\geq \g n\log2\right)dh
\\
&=2^{n(-\EE(\g)-\varepsilon)}\P\left(\max_{\s\in[\s_{n},\s_{n-1})} X^{\s}_{h}\geq \g n\log2\right)
\\
&\leq C'  2^{-n\varepsilon} ,
\end{split}
\Eq(3.lem2.6')
\ee
where the last line follows from \eqv(2.2.lem2.1') of Lemma \thv(2.2.lem2), together with \eqv(3.lem2.3), while in the second line we used that the distribution of $X^{\s}_{h}$ does not depend on $h$, 
as seen from the proof of Lemma \thv(2.2.lem5') (i). 
The claim of \eqv(3.lem2'.4) now follows.

Turning to item (ii), let us establish that under the more restrictive condition that $0<\g<1$, for all $\varepsilon>0$
\be
\P\left(\MM^{\s_n}_n(\g)<2^{n[\EE(\g)-\varepsilon]}\right)\leq C2^{-nc(\g,\varepsilon)},
\Eq(3.lem2.6)
\ee
where $C$ and $c(\g,\varepsilon)$ are as in \eqv(3.lem2'.4bis).
The proof relies on the Paley-Zygmund inequality, which states that if $Z\geq 0$ is a random variable with finite variance, 
and if $0\leq \eta \leq 1$, then 
\be
\P(Z>\eta \E Z)\geq (1-\eta)^{2}{\frac{(\E Z)^{2}}{\E(Z^{2})}}.
\Eq(3.lem2.7)
\ee
Due to the correlation structure of the process, we will not apply \eqv(3.lem2.7) directly to the quantity 
$\MM^{\s_n}_n(\g)$. Instead, we will follow the strategy developed in \cite{K15}, {\it i.e.}, we introduce a coarse-grained version of the process that allows us to gain enough independence to make efficient use of \eqv(3.lem2.7).

Given an integer $K$ and a real number $\d>0$, both of which will eventually depend on $\varepsilon$, define the sequence of $K$-level coarse-grained events
\be
\JJ^{\s_n}_h(m)=
\begin{cases}
\left\{
X^{\s_n}_{h}\left(\left\lceil\sfrac{m-1}{K} n\right\rceil, \left\lceil\sfrac{m}{K} n \right\rceil\right)
\geq \sfrac{(1+\d)}{K}\g n\log 2
\right\} 
& \text{ if $ m=1,\dots, K-1$}, \\
\left\{
X^{\s_n}_{h}\left(\left\lceil\sfrac{m-1}{K} n\right\rceil, \infty\right)
\geq \sfrac{(1+\d)}{K}\g n\log 2
\right\}
 &
\text{ if $ m=K$}.
\end{cases}
\Eq(3.lem2.9)
\ee
Using these events, we define the (random) set of all $h\in[0,1]$ for which all the events \eqv(3.lem2.9) occur,
\be 
\AA^{\s_n}(\g)=
\left\{h\in[0,1] : \prod_{2\leq m\leq K}\1_{\left\{\JJ^{\s}_h(\g,m)\right\}}=1
\right\}.
\Eq(3.lem2.10)
\ee
Note that for all $h\in\AA^{\s_n}$
\be
X^{\s_n}_{h}-X^{\s_n}_{h}\left(0, \left\lceil\sfrac{1}{K} n \right\rceil\right)\geq (1+\d)(1-\sfrac{1}{K})\g n\log 2.
\Eq(3.lem2.11)
\ee
Therefore, setting 
$
\d_K=\d(1-\sfrac{1}{K})-\sfrac{1}{K}
$
with $K>1+1/\d$ so that $\d_K>0$, and defining the events
\be
\begin{split}
\BB^{\s}_{\d_K}
=
&
\left\{h\in[0,1] : X^{\s}_{h}\left(0, \left\lceil\sfrac{1}{K} n  \right\rceil\right)\leq -\d_K\g n\log 2
\right\},
\\
\CC= & 
\left\{h\in[0,1] : X^{\s}_{h}\geq \g n\log 2
\right\},
\end{split}
\Eq(3.lem2.13')
\ee
we have
$
\AA^{\s_n}\subseteq\BB^{\s_n}_{\d_K}\bigcup\CC
$,
and so, observing that $\Leb(\CC)=\MM^{\s_n}_n(\g)$, as follows from \eqv(3.lem2.2),
\be
\MM^{\s_n}_n(\g)\geq \Leb(\AA^{\s_n})-\Leb(\BB^{\s_n}_{\d_K}).
\Eq(3.lem2.13)
\ee
To deal with $\Leb(\BB^{\s_n}_{\d_K})$, note that by Fubini's theorem  and part (ii) of Lemma \thv(2.2.lem2) together with the symmetry of 
$X^{\s}_{h}\left(0, \left\lceil\sfrac{1}{K} n  \right\rceil\right)$ 
\bea
\E\left[\Leb(\BB^{\s_n}_{\d_K})\right]
&\leq&
C 2^{-n K(\d_K\g)^2},
\Eq(3.lem2.14)
\eea
while by Markov's inequality
\be
\P\left(\Leb(\BB^{\s_n}_{\d_K})< 2^{n[\EE(\g)-\varepsilon]}\right)\geq 1-C 2^{-n[\g^2(K\d^2_K-1)-\varepsilon]}.
\Eq(3.lem2.15)
\ee
Since $\d_K\rightarrow \d$ as $K\rightarrow \infty$, this probability approaches $1$ exponentially fast as $n\rightarrow\infty$ for all $\g,\d>0$ provided that $K$ is sufficiently large.

Therefore, to prove \eqv(3.lem2.6), it remains to establish a lower bound on
\be
\P\left(
Z>2\, 2^{n[\EE(\g)-\varepsilon]}\right)
\Eq(3.lem2.16)
\ee 
where
\be
Z=\Leb(\AA^{\s_n}),
\Eq(3.lem2.16')
\ee
or, equivalently, setting 
\be
\eta_n=2\, 2^{n[\EE(\g)-\varepsilon]}/\E\left[Z\right],
\Eq(3.lem2.16'')
\ee
a lower bound on
\be
\P\left(Z>\eta_n\E\left[Z\right]\right).
\Eq(3.lem2.17')
\ee
This is where the Paley-Zygmund inequality \eqv(3.lem2.7) comes into play.

Let us first prove  that with the choice made in \eqv(3.lem2.16''), $\eta_n\rightarrow 0$ as $n\rightarrow\infty$. We thus need a lower bound on $\E\left[Z\right]$. 
Again, by Fubini's theorem, independence, and the fact that the distribution of $X^{\s_n}_{h}(k_1,k_2)$ does not depend on $h$,
\be
\E\left[\Leb(\AA^{\s_n})\right]=\int_{0}^1\prod_{m=2}^K\P\left[\JJ^{\s_n}_h(m)\right]dh=\prod_{m=2}^K\P\left[\JJ^{\s_n}_h(m)\right].
\Eq(3.lem2.17)
\ee
By the lower bound of part (i) of Lemma \thv(2.2.lem1), for all $2\leq m\leq K$
\be
\P\left[\JJ^{\s_n}_h(m)\right]\geq \sfrac{C_1}{\sqrt{n}}2^{-\frac{n\g^2(1+\d)^2}{K}}
\Eq(3.lem2.18)
\ee
for some constant $C_1>0$ that depends only on $K$, $\g$ and $a$. Plugging \eqv(3.lem2.18) in \eqv(3.lem2.17)
\be
\E\left[\Leb(\AA^{\s_n})\right]\geq
\bigl(\sfrac{C_1}{\sqrt{n}}\bigr)^{K-1}2^{-n\g^2(1+\d)^2(1-\frac{1}{K})}.
\Eq(3.lem2.19)
\ee
Thus, if 
$\g^2(1+\d)^2(1-\frac{1}{K})\leq \g^2+\frac{\varepsilon}{2}$, then
\be
\E\left[\Leb(\AA^{\s_n})\right]
\geq
\bigl(\sfrac{C_1}{\sqrt{n}}\bigr)^{K-1}2^{n[\EE(\g)-\frac{\varepsilon}{2}]}.
\Eq(3.lem2.20)
\ee
Substituting \eqv(3.lem2.20) in \eqv(3.lem2.16'')  implies that 
\be
\eta_n\leq 2\, 2^{-n\frac{\varepsilon}{2}}\OO({n}^{(K-1)/2})\rightarrow 0 \,\,\,\text{as} \,\,\, n\rightarrow\infty,
\Eq(3.lem2.20')
\ee 
as desired.

We next establish an upper bound on $\E\left[Z^2\right]$. 
Once again, by Fubini's theorem and independence
\be
\E\left[\Leb(\AA^{\s_n})^2\right]=\int_{0}^1\int_{0}^1\prod_{m=2}^K\P\left[\JJ^{\s_n}_h(m)\cap \JJ^{\s_n}_{h'}(m)\right]dhdh'
=\sum_{i=1}^{4}\II_i
\Eq(3.lem2.23)
\ee
where we decomposed the integration domain into four domains, namely, for  $1\leq i\leq 4$, $\II_i$ are the integrals over the domains
\be
\begin{split}
\DD_1&=\left\{(h,h') : 2^{-n\frac{1}{2K}}<|h-h'|\leq 1\right\},
\\
\DD_2&=\left\{(h,h') : 2^{-n\frac{1}{K}}< |h-h'|\leq 2^{-n\frac{1}{2K}}\right\},
\\
\DD_3&=\bigcup_{2\leq r\leq K}\DD_{3,r}, \quad \DD_{3,r}=\left\{(h,h') :  2^{-n\frac{r}{K}}< |h-h'|\leq 2^{-n\frac{(r-1)}{K}}\right\},
\\
\DD_4&=\left\{(h,h') :  |h-h'|\leq 2^{-n}\right\}.
\end{split}
\Eq(3.lem2.24)
\ee

To estimate $\II_1$, note that for each for $2\leq m\leq K$, 
$
h\curlyvee h'-\left\lceil\sfrac{m-1}{K} n\right\rceil
\leq   \left\lceil\sfrac{1}{2K} n\right\rceil
-\left\lceil\sfrac{1}{K} n\right\rceil
\leq -\frac{n}{2K} +1
$
on $\DD_1$.
Hence, for each $2\leq m\leq K$, item (ii) of  Lemma \thv(2.2.lem1) applies with an exponentially small error term 
of the form $\OO\left(n2^{-n/2K}\right)$.
This yields
\begin{eqnarray}\nonumber
\II_1
&\leq& 
\left(1+(K-1)\OO\left(n2^{-\frac{n}{2K}}\right)\right)
\prod_{m=2}^K\left[\P\left(\JJ^{\s_n}_h(m)\right)\right]^2\\
&=&  
\left(1+(K-1)\OO\left(n2^{-\frac{n}{2K}}\right)\right)
\left(\E\left[\Leb(\AA^{\s_n})\right]\right)^2,
\Eq(3.lem2.27)
\end{eqnarray}
where the last equality is the identity 
\be
\E\left[\Leb(\AA^{\s_n})\right]=\int_{0}^1\prod_{m=2}^K\P\left[\JJ^{\s_n}_h(m)\right]dh=\prod_{m=2}^K\P\left[\JJ^{\s_n}_h(m)\right],
\Eq(3.lem2.17'')
\ee
which is proven exactly as \eqv(3.lem2.17).

Turning to $\II_2$, we have that for each $2\leq m\leq K$, 
$
h\curlyvee h'-\left\lceil\sfrac{m-1}{K} n\right\rceil
\leq -n\frac{m-2}{K}+1
$
on $\DD_2$.
Thus item (ii) of Lemma \thv(2.2.lem1) holds again, but this time with an error term of order
$\OO(1)$ for $m=2$ and 
$\OO\left(n2^{-n/K}\right)$
for $3\leq m\leq K$.  This gives 
\be
\II_2
\leq (1+\OO(1))2^{-n\frac{1}{2K}}\prod_{m=2}^K\left[\P\left(\JJ^{\s_n}_h(m)\right)\right]^2
=  (1+\OO(1))2^{-n\frac{1}{2K}}\left(\E\left[\Leb(\AA^{\s_n})\right]\right)^2.
\Eq(3.lem2.25)
\ee

To estimate $\II_4$, we use instead that item (iii) of Lemma \thv(2.2.lem1) applies for each $2\leq m\leq K$, so that
\be
\begin{split}
\II_4 
&\leq 2^{-n}\prod_{m=2}^K\P\left(\JJ^{\s_n}_h(m)\right)
=  2^{-n}\E\left[\Leb(\AA^{\s_n})\right]
\\
&\leq 
\bigl(\sfrac{\sqrt{n}}{C_1}\bigr)^{K-1}2^{-n[1-\g^2(1+\d)^2(1-\frac{1}{K})]}\left(\E\left[\Leb(\AA^{\s_n})\right]\right)^2,
\end{split}
\Eq(3.lem2.26)
\ee
where the first equality is again \eqv(3.lem2.17''), 
while the last follows from 
\eqv(3.lem2.19).

It remains to deal with the third case. Here
$
\II_3=\sum_{2\leq r\leq K}\II_{3,r}
$,
where $\II_{3,r}$ is the restriction of the integral in \eqv(3.lem2.23) to the domain $\DD_{3,r}$. 
For each fixed $2\leq r\leq K$, $\II_{3,r}$ is bounded using either part (ii) or (iii) of  Lemma \thv(2.2.lem1), depending on whether $m>r$ or $m\leq r$.
More precisely, observing that for each $m>r$, 
$
h\curlyvee h'-\left\lceil\sfrac{m-1}{K} n\right\rceil
<-n\frac{r-(m-1)}{K}
$,
part (ii) of Lemma \thv(2.2.lem1) holds with an error term of order $\OO(1)$ for $m=r+1$ and $\OO\left(n2^{-n/K}\right)$ for $m>r+1$. 
Thus,
\be
\II_{3,r}\leq
(1+\OO(1))
2^{-n\frac{(r-1)}{K}}\bigl(1-2^{-n\frac{1}{K}}\bigr)
\prod_{m=2}^r\P\left(\JJ^{\s_n}_h(m)\right)
\prod_{m=r+1}^K\left[\P\left(\JJ^{\s_n}_h(m)\right)\right]^2.
\Eq(3.lem2.28)
\ee
Summing over $r$ and using \eqv(3.lem2.17'') to reconstruct $\left(\E\left[\Leb(\AA^{\s_n})\right]\right)^2$,
\be
\II_3 
\leq
(1+\OO(1))
\left(\E\left[\Leb(\AA^{\s_n})\right]\right)^2\sum_{2\leq r\leq K}2^{-n\frac{(r-1)}{K}}\prod_{m=2}^r\left[\P\left(\JJ^{\s_n}_h(m)\right)\right]^{-1}.
\Eq(3.lem2.29)
\ee
Using \eqv(3.lem2.18) to express the last product, the sum in \eqv(3.lem2.29) is bounded above by
\be
\begin{split}
\sum_{2\leq r\leq K}2^{-n\frac{(r-1)}{K}}\prod_{m=2}^r\left[\P\left(\JJ^{\s_n}_h(m)\right)\right]^{-1}
&\leq
\sum_{2\leq r\leq K}\left(\sfrac{\sqrt{n}}{C_1}\right)^{r-1}2^{-n\frac{(r-1)}{K}[1-\g^2(1+\d)^2]}
\\
&\leq
C\sqrt{n}2^{-n \left[\frac{1}{K}\left(1-\g^2(1+\d)^2\right)\right]},
\end{split}
\Eq(3.lem2.30)
\ee
where the last inequality holds provided that $1-\g^2(1+\d)^2>0$, for all $n$ sufficiently large.
Here and below, $0<C<\infty$ denotes a constant that may depend on the parameters $K,\d$ and $\g$, and whose value may vary from line to line.
Substituting \eqv(3.lem2.30) into  \eqv(3.lem2.29) then gives
\be
\II_3 
\leq
C\sqrt{n}2^{-n \left[\frac{1}{K}\left(1-\g^2(1+\d)^2\right)\right]}
\left(\E\left[\Leb(\AA^{\s_n})\right]\right)^2.
\Eq(3.lem2.31)
\ee

Collecting the bounds \eqv(3.lem2.25), \eqv(3.lem2.26) and \eqv(3.lem2.31) we get that
 if $\g^2(1+\d)^2<1$, for any positive integer $K$ and all sufficiently large $n$
\be
\begin{split}
\II_2 + \II_3 + \II_4 
&\leq 
C\sqrt{n}2^{
-n\min\left\{
\frac{1}{2K}, \frac{1}{K}\left(1-\g^2(1+\d)^2\right)
\right\}
}
\left(\E\left[\Leb(\AA^{\s_n})\right]\right)^2.
\end{split}
\Eq(3.lem2.32)
\ee
Inserting \eqv(3.lem2.32) and  \eqv(3.lem2.27) into \eqv(3.lem2.23), and recalling \eqv(3.lem2.16'),
\be
\E\left[Z^2\right]
\leq 
 (1+\rho_n(\d,\g,K))\left(\E\left[Z\right]\right)^2.
\Eq(3.lem2.33'')
\ee
where 
\be
\rho_n(\d,\g,K)=
C\sqrt{n}2^{
-n\frac{1}{K}\min\left\{
\frac{1}{2},1-\g^2(1+\d)^2
\right\}
}.
\Eq(3.lem2.33')
\ee

For any given $0< \g<1$ and $\varepsilon >0$, one has to choose $\d$ and $K$ so that $\g^2(1+\d)^2 <1$ and $\g^2(1+\d)^2(1-\frac{1}{K})\leq \g^2+\frac{\varepsilon}{2}$, in addition to the conditions on $\d_K=\d-\frac{1+\d}{K}$ seen above, namely 
$\delta_K>0$ and $\g^2(K\d_K^2-1)>\varepsilon$ in \eqv(3.lem2.15).  This can be realized by simply requiring that $\g^2(1+\d)^2\leq \g^2+\frac{\varepsilon}{2} <1$, {\it i.e.},  choosing $0<\d <\min\{\g^{-1},(1+\frac{\varepsilon}{2\g^2})^{1/2}\}-1$. One can then take $K$ large enough so that the constraints on $\d_K$ are satisfied.

Applying \eqv(3.lem2.7) to the variable  $Z$ defined in \eqv(3.lem2.16'),
 choosing the sequence $\eta_n$ as in \eqv(3.lem2.20'), and using the bounds \eqv(3.lem2.33') and \eqv(3.lem2.33''),  we get
\be
\begin{split}
\P\left(Z>2\, 2^{n[\EE(\g)-\varepsilon]}\right)
\geq 
&
(1-\eta_n)^{2} /(1+\rho_n(\d,\g,K))
\\
\geq 
&1
-C\left(
2^{-n\frac{\varepsilon}{2}}+
\sqrt{n}2^{
-n\frac{1}{K}\min\left\{
\frac{1}{2},1-\g^2(1+\d)^2
\right\}}
\right).
\end{split}
\Eq(3.lem2.33")
\ee
The claim of  \eqv(3.lem2.6) for $n$ large enough now follows from \eqv(3.lem2.13), \eqv(3.lem2.15) and \eqv(3.lem2.33").

It remains to prove part (iii) of the lemma. Consider the covering  $[0,1]= \cup_{1\leq j\leq 2^n}I_j$, 
with $I_j= [(j-1)2^{-n}, j2^{-n}]$. By part (ii) of Lemma \thv(2.2.prop2), for each $1\leq j\leq 2^n$ and $\eta>0$,
\be
\P\left(
\max_{h\in I_j}\max_{\s\in[\s_n,\s_{n-1})}\left|X^{\s_n}_{h}-X^{\s}_{h}\right|\geq \eta n
\right)
\leq 
c\exp\left\{-c'(\eta n)^{3/2}\right\}.
\ee
The claim of \eqv(3.lem2'.01) follows by noting that 
\be
2^nc\exp\left\{-c'(\e n)^{3/2}\right\}\leq C_1(\eta)e^{-C_2(\eta)n}
\ee
for some constants $C_1(\eta), C_2(\eta)>0$.
The proof of  Lemma \thv(3.lem2') is complete. 
\end{proof}

\subsection{Proof of Theorem \thv(3.prop1)}
    \TH(S3.2)
We prove the almost sure convergence and the mean value convergence statements  of Theorem \thv(3.prop1) separately.

\begin{proof}[Proof of Theorem \thv(3.prop1): almost sure convergence] 
Our aim is to establish bounds on the integral in \eqv(3.theo1.0).
To this end, we introduce the following decomposition of the real line. 
Given an integer $M\in\N$, set 
$\g_j\equiv\g_j(M)$ where
\be
\g_j(M)=\frac{j}{M}, \quad j=0,\dots, M+1,
\Eq(3.theo1.4)
\ee
and given $n\in\N$, for all $\s\in[\s_n,\s_{n-1})$, define 
\be
\begin{split}
\Delta_{-1} &= \bigl]-\infty, 0\bigr[
\\
\Delta_j & =\bigl[\g_{j}n\log 2, \g_{j+1}n\log 2\bigr[\,\,,\quad j=0,\dots, M,
\\
\Delta_{M+1} & =\bigl[\g_{M+1}n\log 2, \infty\bigr[\,\,.
\end{split}
\Eq(3.theo1.5)
\ee
Using \eqv(3.lem2.2), we define
\be
\LL\left(\Delta_j \right)
\equiv \Leb\left\{h\in[0,1] : X^{\s}_{h}\in\Delta_j \right\}
=\MM^{\s}_n(\g_{j})-\MM^{\s}_n(\g_{j+1}).
\Eq(3.theo1.6)
\ee
In order to control  the Lebesgue measures $\LL\left(\Delta_j \right)$, we apply Proposition \thv(3.prop2) with $\g=\g_j$ for each $0\leq j\leq M$ and  $M\geq 1$. Note however that since item (ii) of the proposition is only  valid for $0<\g<1$, we do not have a lower bound on these Lebesgue measures when $j\geq M$.
This prompts us to set
\be
\begin{split}
\O^+& 
=\bigcap_{M\in\N}\bigcap_{0\leq j\leq M}\O^{+}(\g_j(M)),
\\
\O^-& 
=\bigcap_{M\in\N}\bigcap_{0\leq j\leq M-1}\O^{-}(\g_j(M)),
\end{split}
\Eq(3.theo1.7)
\ee
and to define, recalling the definition \eqv(3.lem1.2) of $\O^\star$
\be
\wh\O=\O^\star\cap \O^+\cap \O^-.
\ee
Then, by Lemma \thv(3.lem1) and Proposition \thv(3.prop2)
\be
\P\bigl(\wh\O\bigr)=1.
\Eq(3.theo1.8)
\ee
Note that by \eqv(3.lem2.1), for $\g=\g_j$, the sets 
$\O^{+}_n(\varepsilon,\g)$ from \eqv(3.prop2.1) can be written as
\be
\begin{split}
\O^{+}_n(\varepsilon,\g_j)&=\bigcap_{\s\in[\s_{n},\s_{n-1})}\left\{\MM^{\s}_n(\g_j)\leq 2^{n[\EE(\g_j)+\varepsilon]}\right\}\\
\O^{-}_n(\varepsilon,\g_j)&=\bigcap_{\s\in[\s_{n},\s_{n-1})}\left\{\MM^{\s}_n(\g_j)\geq 2^{n[\EE(\g_j)-\varepsilon]}\right\}
\end{split}
,\quad n\geq 1.
\Eq(3.prop2.1bis)
\ee

We begin by proving upper and lower bounds on ${\FF}^{\s}(\b)$. 

\smallskip
\noindent\emph{Upper bound on ${\FF}^{\s}(\b)$.}  We use the partition \eqv(3.theo1.5) to decompose the integral in \eqv(3.theo1.0).
Firstly, given $n\in\N$ we have that on $\O^{+}_n(\varepsilon,\g_j)$, for all $\s\in[\s_{n},\s_{n-1})$
\bea
\sum_{j=0}^{M}\int_0^1e^{\b X^{\s}_{h}}\1_{\left\{X^{\s}_{h}\in\Delta_j \right\}}dh
&\leq &
\sum_{j=0}^{M}e^{\b\g_{j+1}n\log 2}\MM^{\s}_n(\g_{j})
\Eq(3.theo1.9)
\\
&\leq &
\sum_{j=0}^{M}e^{\left\{\b\g_{j+1}+\EE(\g_{j})+\varepsilon\right\}n\log 2}
\Eq(3.theo1.10)
\\
&\leq &
(M+1)\,e^{\max_{0\leq j\leq M}\left\{\b\g_{j+1}+\EE(\g_{j})+\varepsilon\right\}n\log 2},
\Eq(3.theo1.11)
\eea
where we used that by \eqv(3.theo1.6),
$
\LL\left(\Delta_j \right)\leq \MM^{\s}_n(\g_{j})
$
which, on $\O^{+}_n(\varepsilon,\g_j)$, is bounded above as in \eqv(3.prop2.1bis).
To deal with the interval $\Delta_{M+1}$, we note that by Lemma \thv(3.lem1), taking $\varepsilon^{\star}=1/M$ in
\eqv(3.lem1.1') (and replacing the union over $\varepsilon^{\star}>0$ in \eqv(3.lem1.2) by a union over $M\geq 1$),
\be
\int_0^1e^{\b X^{\s}_{h}}\1_{\left\{X^{\s}_{h}\in\Delta_{M+1} \right\}}dh=\int_0^1e^{\b X^{\s}_{h}}\1_{\left\{X^{\s}_{h}\in\Delta_{M+1} \right\}}
\1_{\left[\O^{\star}_{n}\left(\frac{1}{M}\right)\right]^c}dh=0
\Eq(3.theo1.12)
\ee
with $\P$-probability one, for all but a finite number of indices $n$ and all $\s\in[\s_{n},\s_{n-1})$.
Finally,
\be
\int_0^1e^{\b X^{\s}_{h}}\1_{\left\{X^{\s}_{h}\in\Delta_{-1}\right\}}dh\leq 1.
\Eq(3.theo1.13)
\ee
Gathering our bounds, we conclude that, by virtue of \eqv(3.theo1.8), with $\P$-probability one, for all but a finite number of indices $n$ and all $\s\in[\s_{n},\s_{n-1})$
\be
{\FF}^{\s}(\b)\leq 
\frac{n\log 2}{\log\left[(\s-\frac{1}{2})^{-1}\right]} 
\left(\max_{0\leq j\leq M}\left\{\b\g_{j}+\EE(\g_{j})\right\}+\varepsilon+\frac{\beta}{M}+\frac{\log (M+1)}{n\log 2}+o(1)\right),
\Eq(3.theo1.14)
\ee
where the $o(1)$ term is exponentially small in $n$, and where by \eqv(1.4bis) and \eqv(2ns)
\be
\frac{n\log 2}{\log\left[(\s-\frac{1}{2})^{-1}\right]} =1+\OO\left(n^{-1}\right).
\Eq(3.theo1.14')
\ee

\smallskip
\noindent\emph{Lower bound on ${\FF}^{\s}(\b)$.}  Choose any $0\leq j\leq M-1$.
Then, given $n\in\N$ we have that on $\O^{-}_n(\varepsilon,\g_j)\cap\O^+_n(\varepsilon,\g_{j+1})$, for all $\s\in[\s_{n},\s_{n-1})$
\bea
\int_0^1e^{\b X^{\s}_{h}}dh
&\geq&
\int_0^1e^{\b X^{\s}_{h}}\1_{\Delta_j}dh
\Eq(3.theo1.15)
\\ 
&\geq&
e^{\b\g_{j}n\log 2}\left[\MM^{\s}_n(\g_{j})-\MM^{\s}_n(\g_{j+1})\right]
\nonumber
\\ 
&\geq&
e^{\left\{\b\g_{j}+\EE_n(\g_{j})-\varepsilon\right\}n\log 2}
\left(1-2^{-n\left[\frac{2j+1}{M^2}-2\varepsilon\right]}\right)
\nonumber
\\
&\geq&
e^{\left\{\b\g_{j}+\EE(\g_{j})-\varepsilon\right\}n\log 2}
\left(1-2^{-n\left[(1/M)^2-2\varepsilon\right]}\right),
\Eq(3.theo1.17)
\eea
where we used the lower bound on $\MM^{\s}_n(\g_{j})$ from $\O^{-}_n(\varepsilon,\g_j)$ 
and the upper bound on $\MM^{\s}_n(\g_{j+1})$ from $\O^+_n(\varepsilon,\g_{j+1})$ in \eqv(3.prop2.1bis).  Without loss of generality, we can 
assume that $\varepsilon$ is small enough so that
$
2\varepsilon< (1/M)^2
$. 
Since this holds for any $0\leq j\leq M-1$, 
choosing $j$ in \eqv(3.theo1.15) as the index at which the maximum over $0\leq j\leq M-1$ of the set 
$\left\{\b\g_{j}+\EE(\g_{j})\right\}$ is reached, we obtain, again by virtue of \eqv(3.theo1.8), that 
\be
{\FF}^{\s}(\b)\geq
\frac{n\log 2}{\log\left[(\s-\frac{1}{2})^{-1}\right]} 
\left(
 \max_{0\leq j\leq M-1}\left\{\b\g_{j}+\EE(\g_{j})\right\}-\varepsilon+o(1)
 \right)
\Eq(3.theo1.18)
\ee
with $\P$-probability one, for all but a finite number of indices $n$ and all $\s\in[\s_{n},\s_{n-1})$. Here again, the $o(1)$ term is exponentially small in $n$. 

Because $x\mapsto \b x+\EE(x)$, $x\in [0,1]$, is a second order polynomial, it easily follows from \eqv(3.theo1.14)  and \eqv(3.theo1.18) by first letting $n$ go to infinity and then letting $M$ go to infinity that
\be
\lim_{\s\rightarrow\frac{1}{2}^+}{\FF}^{\s}(\b)=\max_{x\in[0,1]}\left\{\b x+\EE(x)\right\}=f(\b)\quad \P-\text{almost surely},
\Eq(3.theo1.20bis)
\ee
where $f(\b)$ is defined in \eqv(1.theo1.3bis).

In fact we can say more, namely, that \eqv(3.theo1.20bis) holds $\P$-almost surely for all $\b>0$ simultaneously. This comes from the fact that $\wh\O$ in \eqv(3.theo1.8) does not depend on $\beta$. Alternatively, one can use the fact that the integral means spectrum of an analytic function is convex in $\beta$.
The proof of the almost sure convergence claim of Theorem \thv(3.prop1)  is now complete.\end{proof}

\begin{proof}[Proof of Theorem \thv(3.prop1): convergence in mean of order $1\leq q<\infty$.] 
Let us establish that for any $0\leq q<\infty$
\be
\textstyle
\lim_{\s\rightarrow\frac{1}{2}^+}\E \left(|{\FF}^{\s}(\b)-f(\b)|^q\right)=0,
\Eq(3.theo1.25)
\ee
where ${\FF}^{\s}(\b)$ is as defined in \eqv(3.theo1.0). This will be deduced from the almost sure convergence 
\eqv(3.theo1.20bis) and the following criterion.

\begin{lemma}[Mean Convergence Criterion]
    \TH(3.lem5)
If the random variables $\{\left|{\FF}^{\s}(\b)\right|^q, \s> 1/2\}$ are uniformly integrable (UI) for some $q>0$, that is
\be
\lim_{a\rightarrow\infty} \sup_{\s>1/2}\E\left[\left|{\FF}^{\s}(\b)\right|^q\1_{\left\{\left|{\FF}^{\s}(\b)\right|^q>a\right\}}\right]=0,
\Eq(3.lem5.1)
\ee
and 
$
\lim_{\s\rightarrow\frac{1}{2}^+}{\FF}^{\s}(\b)=f(\b)
$ 
in $\P$-probability, then
$
\lim_{\s\rightarrow\frac{1}{2}^+}\E \left(|{\FF}^{\s}(\b)-f(\b)|^q\right)=0.
$ 
\end{lemma}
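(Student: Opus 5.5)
This is the standard fact that convergence in probability together with uniform integrability gives convergence in $L^q$. The plan is to reduce it to Vitali's convergence theorem, applied along arbitrary sequences $\s_k\downarrow 1/2$; this suffices because a limit as $\s\to\frac12^+$ is characterised by its sequential limits. Write $G_\s:=\FF^{\s}(\b)-f(\b)$. Since $f(\b)$ is a deterministic finite constant, I first transfer uniform integrability from $|\FF^{\s}(\b)|^q$ to $|G_\s|^q$. The elementary inequality $|x-y|^q\le c_q(|x|^q+|y|^q)$, with $c_q=\max(1,2^{q-1})$, gives
\begin{equation*}
|G_\s|^q\le c_q\bigl(|\FF^{\s}(\b)|^q+|f(\b)|^q\bigr).
\end{equation*}
A family dominated by a constant multiple of a UI family plus a constant is again UI. Concretely, on $\{|G_\s|^q>a\}$ with $a$ large, either $|\FF^{\s}(\b)|^q>a/(2c_q)$, or the term is bounded by $2c_q|f(\b)|^q$ times the probability of that event. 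That probability tends to zero uniformly in $\s$, because UI implies $\sup_\s\E|\FF^{\s}(\b)|^q<\infty$ and Markov's inequality applies.

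Next I fix $\e>0$ and split $\E|G_\s|^q$ on the event $\{|G_\s|\le\e\}$ and its complement:
\begin{equation*}
\E|G_\s|^q\le \e^q+\E\bigl[|G_\s|^q\1_{\{|G_\s|>\e\}}\bigr].
\end{equation*}
For the second term and any $a>0$, I bound
\begin{equation*}
\E\bigl[|G_\s|^q\1_{\{|G_\s|>\e\}}\bigr]\le a\,\P(|G_\s|>\e)+\sup_{\s'>1/2}\E\bigl[|G_{\s'}|^q\1_{\{|G_{\s'}|^q>a\}}\bigr].
\end{equation*}
I then choose $a$ so large that the supremum is below $\e$, which is possible by the uniform integrability established above. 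Convergence in probability then gives $a\,\P(|G_\s|>\e)\to0$ as $\s\to\frac12^+$. Hence $\limsup_{\s\to1/2^+}\E|G_\s|^q\le \e^q+\e$, and letting $\e\to0$ concludes. The case $q=0$ is trivial, and for $q<1$ the same argument works verbatim since it never uses the triangle inequality in $L^q$.

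There is no real obstacle in the lemma itself; the only care needed is the transfer of UI to the centred variables $G_\s$. The genuine difficulty comes later, when the lemma is applied: one must verify the UI hypothesis (for instance via a bound on $\sup_\s\E|\FF^{\s}(\b)|^{q'}$ for some $q'>q$). The convergence in probability is supplied by the almost sure convergence \eqv(3.theo1.20bis).
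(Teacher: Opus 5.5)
Your proof is correct and follows the same route as the paper. The paper only remarks that the lemma is a direct adaptation of the classical mean convergence criterion for uniformly integrable sequences; you write that adaptation out explicitly for the continuous parameter $\s\to\frac{1}{2}^+$, including the transfer of uniform integrability to $|\FF^{\s}(\b)-f(\b)|^q$, which is easy here because $f(\b)$ is deterministic. One small point: your remark on $q=0$ lies outside the lemma's hypothesis $q>0$, and it only makes sense if ``$L^0$-convergence'' is read as convergence in probability.
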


\begin{proof} This is a direct adaptation of the proof of the mean convergence criterion classically stated for sequences of UI random variables (see, {\it e.g.}, Theorem 3 (i) p.~100 in \cite{CT}).
\end{proof}

As almost sure convergence implies convergence in probability, it remains to prove that:
\begin{lemma}
    \TH(3.lem6)
The random variables  $\{\left|{\FF}^{\s}(\b)\right|^q, 1/2<\s\leq 1\}$ are uniformly integrable for any $0\leq q<\infty$.
\end{lemma}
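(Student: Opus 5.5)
Since $\sup_{\s}\E|{\FF}^{\s}(\b)|^{q'}<\infty$ for some $q'>q$ implies uniform integrability of $\{|{\FF}^{\s}(\b)|^q\}$ by de la Vallée-Poussin, I will show that all moments of ${\FF}^{\s}(\b)$ are bounded uniformly in $1/2<\s\le 1$. Set $L_\s:=\log[(\s-\frac12)^{-1}]$. For $\s$ bounded away from $1/2$, say $\s\in(\s_{n_0},1]$, $L_\s$ is bounded below only if we stay away from $\s=3/2$, which is automatic since $\s\le1$. On that range $X^{\s}_h$ is a convergent sum with variance $\varsigma^2_{0,\infty}$ bounded uniformly. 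So it suffices to control the numerator
\[
\log\int_0^1 e^{\b X^{\s}_h}\,dh ,
\]
both from above and from below. Concretely, I will bound $\E\bigl|\log\int_0^1 e^{\b X^{\s}_h}dh\bigr|^{q'}\le C(1+n)^{q'}$ for $\s\in[\s_n,\s_{n-1})$, with $C$ independent of $n$. Dividing by $L_\s\asymp n\log2$ (see \eqv(3.theo1.14')) then gives the uniform bound.

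\emph{Upper tail.} Use $\log\int_0^1 e^{\b X^{\s}_h}dh\le \b\max_{h\in[0,1]}X^{\s}_h$, and cover $[0,1]$ by the $2^n$ intervals $I_\ell$ as in the proof of Lemma \thv(3.lem1). By Lemma \thv(2.2.lem2)(i) with $\a_1=0$, $\bar\a_2=1$, for $\g\ge 2$,
\[
\P\Bigl(\max_h X^{\s}_h\ge \g n\log 2\Bigr)\le C\,2^{n(1-\g^2)}\le C\,2^{-n\g^2/2}.
\]
The constants in that lemma depend on $\g$. To get uniformity for large $\g$, I would instead redo the argument via \eqv(2.2.lem2.2), where the constant depends only on the ratio $y/\varsigma^2$. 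A simpler route is the chaining bound Lemma \thv(2.2.prop2)(ii) combined with the subgaussian bound Lemma \thv(2.2.lem5'). Either way, the target is
\[
\P\Bigl(\max_h X^{\s}_h>t\Bigr)\le 2^n e^{-t^2/(Cn)}+e^{-ct^{3/2}}\quad\text{for } t\ge Cn .
\]
This yields $\E\bigl[(\max_h X^{\s}_h)_+^{q'}\bigr]\le C n^{q'}$.

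\emph{Lower tail.} This is the main obstacle, because the integral could be abnormally small. By Jensen's inequality,
\[
\log\int_0^1 e^{\b X^{\s}_h}dh\ge \b\int_0^1 X^{\s}_h\,dh .
\]
The variable $\int_0^1 X^{\s}_h dh=\sum_p p^{-\s}\int_0^1\Re(U_pp^{-ih})dh$ is a sum of independent bounded centred terms, with coefficients of size $p^{-\s}\min(1,2/\log p)$. Its variance is therefore at most $\sum_p p^{-1}\min(1,\log^{-2}p)<\infty$, uniformly in $\s>1/2$. The exponential-moment computation of Lemma \thv(2.2.lem5') applies verbatim to it and gives a uniform subgaussian tail. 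Hence the negative part of the numerator has all moments bounded uniformly in $\s$. Dividing by $L_\s$, which is bounded below by a positive constant on $(1/2,1]$, finishes the argument.

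Combining the two tails gives $\sup_{1/2<\s\le1}\E|{\FF}^{\s}(\b)|^{q'}<\infty$ for every $q'$, hence uniform integrability for every $q$. The delicate step is the uniform-in-$\g$ upper tail. I would resolve it by the direct Chernoff computation, using the moment generating function bound \eqv(2.2.lem5'.7) at each of $2^n$ grid points together with the chaining estimate \eqv(2.2.lem6.1) for the oscillation within each $I_\ell$.
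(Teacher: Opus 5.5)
There is a gap in your upper tail, and it comes from your own choice of route. Bounding $\log\int_0^1e^{\b X^\s_h}dh$ by $\b\max_hX^\s_h$ forces you to control the maximum with tail bounds that are uniform in the level, and you leave those unproved.

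Your first route fails. In \eqref{2.2.lem2.2} you need $y<C\varsigma^2_{k_1,k_2}$, so $y=\g n\log 2$ forces $C>2\g$. The bound produced in \eqref{2.2.prop2.15} then carries a factor of order $e^{cC^3}$. For fixed $n$ this overwhelms the gain $2^{-n\g^2}$ as $\g\to\infty$, so it cannot feed a moment integral.

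Your fallback via \eqref{2.2.lem6.1} is only stated for $y\le C2^{2n}$. Your ``target'' tail bound for $t>C2^{2n}$ is therefore not supported by anything in the paper.

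None of this is needed. By Markov's inequality, Fubini, and the moment generating function bound \eqref{2.2.lem5'.7} (used as in \eqref{3.lem6.4}),
\[
\P\Bigl(\log\int_0^1 e^{\b X^\s_h}dh>u\Bigr)\le e^{-u}\int_0^1\E\, e^{\b X^\s_h}\,dh\le \exp\Bigl(-u+\tfrac{\b^2}{4}\bigl(n\log 2+O(1)\bigr)\Bigr).
\]
So the positive part of $\log\int_0^1 e^{\b X^\s_h}dh$ is at most $\frac{\b^2}{4}(n\log 2+O(1))$ plus a variable dominated by a mean-one exponential. This gives $\E\bigl[(\log\int_0^1 e^{\b X^\s_h}dh)_+^{q'}\bigr]\le C(1+n)^{q'}$ at once. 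This is exactly how the paper treats the right tail, \eqref{3.lem6.3}--\eqref{3.lem6.5}. Substituting it for your maximum-based step closes your proof.

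Your lower tail is correct, and it differs genuinely from the paper's. The paper applies Markov to $(\int_0^1e^{\b X^\s_h}dh)^{-1}\le\int_0^1e^{-\b X^\s_h}dh$ (see \eqref{3.lem6.7}). That only yields $\P({\FF}^\s(\b)<-y)\le c(\b)2^{n\b^2/4-ny}$, which is useful only for $y>\b^2/4$.

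Your bound works differently. You apply Jensen to get $\log\int_0^1e^{\b X^\s_h}dh\ge\b\int_0^1X^\s_h\,dh$. Combined with $|\int_0^1p^{-ih}dh|\le\min(1,2/\log p)$ and $\sum_p p^{-1}\log^{-2}p<\infty$, this shows the negative part of the numerator is uniformly subgaussian. The interchange of $\sum_p$ and $\int_0^1$ is justified by the locally uniform convergence in Lemma \ref{le:monday}. Hence $({\FF}^\s(\b))_-$ is of order $1/L_\s$ in every $L^{q'}$, a stronger conclusion for less work. Together with the Markov upper tail and de la Vall\'ee-Poussin, you get a slightly cleaner proof than the paper's direct estimate of \eqref{3.lem6.2}.
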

\begin{proof}[Proof of Lemma \thv(3.lem6)] Throughout the proof $n\equiv n(\s)$ as in \eqv(1.4bis). Let  $1\leq q<\infty$ be given (the case $0<q<1$ will then follow by the H\"older inequality).  It suffices to prove that for any $\varepsilon>0$, there exists $a>0$ large enough so that for all $1/2<\s\leq 1$
\be
\E\left[\left|{\FF}^{\s}(\b)\right|^q\1_{\left\{\left|{\FF}^{\s}(\b)\right|^q>a\right\}}\right]<\varepsilon.
\Eq(3.lem6.1)
\ee
We can rewrite the left-hand side of \eqv(3.lem6.1) as
\be
\begin{split}
\E\left[\left|{\FF}^{\s}(\b)\right|^q\1_{\left\{\left|{\FF}^{\s}(\b)\right|^q>a\right\}}\right]
=
&
\int_{a^{1/q}}^{\infty}qy^{q-1}\P\left({\FF}^{\s}(\b)>y\right)dy+a\P\left( {\FF}^{\s}(\b)>a^{1/q}\right)
\\
+&
\int_{a^{1/q}}^{\infty}qy^{q-1}\P\left( {\FF}^{\s}(\b)<-y\right)dy+a\P\left( {\FF}^{\s}(\b)<-a^{1/q}\right).
\end{split}
\Eq(3.lem6.2)
\ee
Using successively Chebyshev's exponential inequality, \eqv(2ns) and Fubini's theorem
\be
\P\left( {\FF}^{\s}(\b)>y\right)
\leq 
2\,2^{-ny}\E\left(\int_0^1e^{\b X^{\s}_{h}}dh\right)
=
2\,2^{-ny}\int_0^1\E\left(e^{\b X^{\s}_{h}}\right)dh.
\Eq(3.lem6.3)
\ee
By \eqv(2.2.lem5'.7) with $\l=\b$, $k_1=0$ and $k_2=\infty$ ({\it i.e.}, $\a_1=0$ and $\bar\a_2=1$), and  the expression \eqv(2.2.lem5'.2) of $\varsigma^2_{k_1,k_2}$
\be
\E\left(e^{\b X^{\s}_{h}}\right)
\leq e^{\frac{\b^2}{4}[n\log 2 +\OO(1)]}.
\Eq(3.lem6.4)
\ee
Thus,
\be
\P\left( {\FF}^{\s}(\b)>y\right)\leq c(\b)2^{\frac{n\b^2}{4}-ny},
\Eq(3.lem6.5)
\ee
for all $y\in\R$ and  some constant $c(\b)>0$ that depends only on $\b$. 
Consider now the integral
\be
\II_n(a,q)\equiv\int_{a^{1/q}}^{\infty}qy^{q-1}\P\left({\FF}^{\s}(\b)>y\right)dy.
\Eq(3.lem6.5bis)
\ee
If $q=1$
\be
\II \leq c(\b)\sfrac{1}{n\log 2}2^{\frac{n\b^2}{4}-na}.
\ee
If $q>1$, then
$
y^{q-1}2^{-ny}=\exp\bigl\{-n\ln 2\bigl[y-(q-1)\frac{\ln y}{n\ln 2}\bigr]\bigr\}\leq -n\ln 2\bigl(1-\frac{(q-1)}{n\ln 2}\bigr)y
$
for all 
$
y>0
$. 
Thus,
\be
\II_n(a,q)
\leq 
qc(\b)\left(1-\sfrac{(q-1)}{n\ln 2}\right)^{-1}\sfrac{1}{n\log 2}2^{\frac{n\b^2}{4}-na^{1/q}\left(1-\frac{(q-1)}{n\ln 2}\right)}.
\Eq(3.lem6.5ter)
\ee
Combining our bounds, we get that for all $q\geq 1$
\be
\II_n(a,q)+a\P\left( {\FF}^{\s}(\b)>a^{1/q}\right)
\leq
c(\b)2^{\frac{n\b^2}{4}-na^{1/q}}\left(\sfrac{q(1+o(1))}{n\ln 2}2^{(q-1)a^{1/q}}+a\right),
\Eq(3.lem6.6)
\ee
which can be made as small as desired by taking $a^{1/q}>\frac{\b^2}{4}$ sufficiently large.

The left tail is dealt with in a similar way, using successively Chebyshev's exponential  inequality, \eqv(2ns), Jensen's inequality 
and Fubini's theorem to write
\be
\begin{split}
\P\left( {\FF}^{\s}(\b)<-y\right)
&\leq 
2^{-ny}\E\left[\left(\int_0^1e^{\b X^{\s}_{h}}dh\right)^{-1}\right]     
\\
&\leq
2^{-ny}\E\left[\int_0^1e^{-\b X^{\s}_{h}}dh\right]    
\leq
2^{-ny}\int_0^1\E\left(e^{-\b X^{\s}_{h}}\right)dh,                        	
\end{split}
\Eq(3.lem6.7)
\ee
and proceeding as in \eqv(3.lem6.4) to bound the exponential moments. The second line in \eqv(3.lem6.2) is thus bounded exactly as the first, {\it i.e.}, as in \eqv(3.lem6.6), and so, for each $\s$, the sum of the two can be made as small as desired by taking 
$
a^{1/q}>\max\bigl\{\frac{\b^2}{4}, \frac{(q-1)}{n\ln 2}\bigr\}
$
sufficiently large.
\end{proof}

Theorem \thv(3.prop1) -- and therefore of Theorem \thv(th:zeta) --  is now complete.
\end{proof}


\section{Noninjectivity  of the primitive: proof of Propositions \ref{le:1} and \ref{le:2}}
\TH(S4)
\begin{proof}[Proof of Proposition \ref{le:1}.]

A version of Koebe's distortion theorems (Bieberbach's inequality, see, {\it e.g.}, \cite[Lemma 1.3, p. 21]{P}) provides a necessary condition for univalence by noting that for a univalent function on the unit disc $\D$ the quantity $(1-|z|^2)|f''(z)|/|f'(z)|$ remains bounded on $\D$. A somewhat more detailed version of this  is sometimes called the Becker-Pommerenke injectivity criterion. One may note here  that this quantity remains invariant in the standard normalization $f\mapsto (f-f(0))/f'(0)$ which is used in stating the distortion theorems  for univalent functions on the unit disc. If $U\subset \C$ is a simply connected domain and $f:U\to\C$ is univalent, one may consider the map $f\circ h$, where $h:\D\to U$ is a conformal map. By applying the criterion on $f\circ h$ and returning to original coordinates we see that 
$$
\frac{(1-|h^{-1}(z)|^2)}{(h^{-1})'(z)}\Big(\frac{f''(z)}{f'(z)}+((h^{-1})'(z))^2h''\circ h^{-1}(z)\Big)
$$
stays bounded in $U$. Especially, if $\alpha\in\partial U$ and $\partial U$ is smooth in a neighbourhood of $\alpha$, by localizing the classical Kellog-Warchawski theorems (see \cite[Theorems 10.2 and 10.3, p. 298-301]{P})  on the  smoothness of $h$  we obtain that 
$$
\sup_{z\to\alpha}d(z,\partial U)\frac{|f''(z)|}{|f'(z)|} <\infty.
$$

 In order to prove our claim it thus suffices to show that there exists a decreasing sequence of positive reals $(\sigma_k)$ such that $\sigma_k\searrow 1/2$ and 
$$
\sup_{k\geq 1} (\sigma_k-1/2)|F''(\sigma_k)/F'(\sigma_k)|=\infty\qquad\textrm{almost surely}.
$$
According to Lemma \ref{le:monday} we may write   for $\sigma >1/2$ the decomposition 
$
\log (F'(s))= \sum_{p\in\mathcal P}U_pp^{-ih-\sigma}+ B(s)
$,
where $B(s)$ extends almost surely analytically to a domain containing  $\{\sigma \geq 1/2\}$, and especially $B'(s)$ is almost surely bounded in $\{ s=\s+ih\,:\, 1/2\leq \sigma \leq 1,\; -1\leq h\leq 1\}$. It follows that it is enough to construct  a decreasing sequence of positive reals $\sigma_k\searrow 1/2$ such that 
\begin{equation}\label{eq:1}
\sup_{k\geq 1} (\sigma_k-1/2) \left|\sum_{p\in\mathcal P}U_p\log p\, p^{-\sigma_k}\right|=\infty\qquad\textrm{almost surely}.
\end{equation}
For that purpose  denote 
$$
Y_k:= (\sigma_k-1/2)\sum_{p\in\mathcal P}U_p \log p\, p^{-\sigma_k},
$$
 and notice that $Y_k$ satisfies the asymptotics
$$
\E |Y_k|^2 =(\sigma_k-1/2)^2\sum_{p\in \mathcal P}(\log p)^2 \, p^{-2\sigma_k} \stackrel{\s_k\to 1/2}{\sim} 1/4,
$$
where one uses for instance the identity giving  $I_{m}$ for $m=2$ in the proof of Lemma \ref{rk1}. 
Our goal is to prove that almost surely $\sup_k |Y_k| =\infty$.

We now use induction to pick up a strictly increasing sequence of indices $(n_k)_{k\geq 0}$with $n_0=0$, and a decreasing sequence of $\sigma_k$'s (say with $\sigma_1=1$ and $\sigma_k\leq 1/2+1/k$) in such a way that for all $k\geq 1$ we have 
\begin{equation}\label{eq:3}
(\sigma_k-1/2)^2\sum_{p< n_{k-1}\;\textrm{or}\; p\geq n_{k}}(\log p)^2 p^{-2\sigma_k} \leq 2^{-k}.
\end{equation}
The starting of the induction is obvious as one may just pick $n_1$ large enough. Assume then that $\sigma_k,n_k$ are already chosen. We  pick first $\sigma_{k+1}\leq \min \{\sigma_k, 1/2+1/(k+1)\}$  small  enough so that $(\sigma_{k+1}-1/2)^2\sum_{p\leq n_{k}}(\log p)^2 p^{-1} \leq 2^{-k-2}$, and thereafter choose
$n_{k+1}$ so large that $ (\sigma_{k+1}-1/2)^2\sum_{p\geq n_{k+1}}(\log p)^2 p^{-2\sigma_{k+1}} \leq 2^{-k-2}$.

Consider the random variables
$$
\widetilde Y_k:= (\sigma_k-1/2)\sum_{n_{k-1}\leq p<n_k}U_p \log p \,p^{-\sigma_k},
$$
According to \eqref{eq:3} we have $\E |\widetilde Y_k- Y_k|^2\leq 2^{-k}$, so that $\E \big(\sum_{k\geq 1}|\widetilde Y_k- Y_k|^2\big)<\infty.$This implies that $\lim_{k\to\infty} (\widetilde Y_k- Y_k)=0$ almost surely, and hence it is enough  to show that $\sup_k |\widetilde Y_k| =\infty$. However, by construction, the variables $\widetilde Y_k$ are independent and centered, and their variance is bounded from below and above. The desired statement follows as soon as we verify that the limiting distribution of (normalized) $\widetilde Y_k$ is Gaussian. But this follows immediately from Lindeberg's central limit theorem (see \cite[Theorem 6.12]{Kal}) as each $Y_k$ is a linear combination of  i.i.d. variables, and the maximal coefficient in the linear combination tends to zero as $k\to\infty.$
\end{proof}

\begin{proof}[Proof of Proposition \ref{le:2}.]
The proof is exactly the same as the above proof of Proposition \ref{le:1}, the only difference being that there is no need to use the central limit theorem since our variables are already Gaussian.
\end{proof}


\TH(S5)
\section{Approach via Gaussian multiplicative chaos}

In this section we first recall the definition of Gaussian multiplicative chaos (GMC) measures, and then use some basic properties of these measures (and their approximations) to prove an analogue of our main result for a random analytic function in the unit disc that uses no number theoretic elements in the definition. Finally, we outline how GMC-theory can be used to provide another proof of our main result via a coupling result from \cite{SW}.
\subsection{GMC measures}
The theory of Gaussian multiplicative chaos measures was pioneered by Kahane in 1985. In the last two decades the interest to GMC-measures has been steadily increasing. During this period the  natural role of GMC was first  realized in connection with Liouville quantum gravity and SLE. Soon after it appeared also  in connection with random matrices, statistics of the Riemann zeta function, and estimates for random and Dirichlet character sums  \cite{DS,Sh,Webb,SW,Harper3}.\footnote{We list only some of the earliest references herein, a general reference for multiplicative chaos is \cite{RV3}.}

Formally GMC is constructed on (say) the interval $[0,1]$ as an exponential $``\exp (X(x))"$, where $X$ is 
a log-correlated (and centered) Gaussian field on the interval $x\in [0,1]$, with the formal covariance structure,\footnote{Note that   the factor $1/2$ is not usually included in front of the logarithm, but we add it here  to ensure that no extra scaling is required in the random zeta function model.}
$$
\E X(x)X(y)\; =\; \frac{1}{2}\log \frac{1}{|x-y|}+g(x,y),
$$
where $g$ is continuous.  The field actually takes values in generalized functions, and one usually addresses this difficulty by considering suitable approximations. A typical example is a standard mollification $X_\varepsilon$ obtained by the convolution of  $X$ and $\varepsilon^{-1}\psi(\cdot/\varepsilon)$, where $\psi$ is a compactly supported bump function that integrates to 1.Given a constant $\beta>0$, we may then define the measure 
\begin{equation}\label{eq:epsi}
\mu_{\beta,\varepsilon}(dx):=\exp\big(\beta X_\varepsilon(x)-\frac{\beta^2}{2}\E X_\varepsilon(x)^2\big)dx
\end{equation}
on $(0,1)$. From the basic multiplicative chaos theory  one knows that there is the convergence as $\varepsilon \to 0$,
$$
\mu_{\beta,\varepsilon}\stackrel{\P}\longrightarrow \mu_\beta,
$$
where $\mu_\beta$ is the GMC measure related to the field $X$.  The case $\beta\in (0,\sqrt{2})$ is an easy $L^2$-computation \cite{HK71}; in the more difficult general case, see, {\it e.g.},  \cite{DS,B} for a simple  approach. For other  treatments  of this convergence result  we refer to \cite{K,RV,DS}. The limit $\mu_\beta$ is a non-trivial random measure in case $\beta\in (0,2)$. It is supported on the set of $\beta$-thick points of $(0,1)$, {\it i.e.,} on the set
\begin{equation}\label{eq:thick}
\big\{x\in (0,1)\;|\; \lim_{\varepsilon\to 0}\frac{X_\varepsilon (x)}{\log(1/\varepsilon)}=\beta/2\big\}.
\end{equation}

One may note that the renormalization factor $\exp\big(-\frac{\beta^2}{2}\E X_\varepsilon (x)^2\big)$ in \eqv(eq:epsi) comes simply from the requirement
that the density has expectation 1 at every point $x$. In the approach we describe in this section, it is exactly this factor that is the origin of the quadratic term $\beta^2/4$ which appears in the results we are after, see, {\it e.g.}, \eqref{1.theo1.3bis}. 

When $\beta=2$ (the critical case) or $\beta>2$ (the supercritical case), the situation is different, and the definition of the chaos requires a non-trivial further renormalization. However, in this range we are dealing with the well-understood freezing phenomenon for log-correlated fields \cite{PhysRevLett.108.170601}. This can be understood by the simple (and effective) heuristics that for $\beta >2$ the mass for the chaos measure approximation is obtained from the $(\beta =2)$-thick points and by noting that the maximum of $X_\varepsilon$ is very likely to be of the order $\log(1/\varepsilon)$. (For the relevant upper bound see Lemma \ref{le:max} below.) These heuristics are actually built in the proof in Section \ref{subse:unit disc}.

\subsection{A random analytic function in the unit disc}\label{subse:unit disc}
Our aim is to first consider here a unit disc analogue of the model that was discussed before in the right half-plane, which is related in a similar way to a (almost canonical basic) model of holomorphic chaos. 
Thus, consider the holomorphic function $F$ on $\D:=\{ z\in\C\, :\,|z|<1\}$ defined via $F(0)=0$ and with the derivative
\begin{eqnarray}
F'(z):=\exp(G(z)),\quad 
G(z):=\sum_{n=1}^\infty \frac{G_nz^n}{\sqrt{n}}.
\end{eqnarray}
Here the variables $G_n=(V_n-iW_n)/\sqrt{2}
$ are independent  standard complex Gaussians. 
Note that for $\beta=|\beta|e^{iu}\in \mathbb C$, the complex power $(F'(z))^\beta=\exp\left(\beta G(z)\right)$  has same law as $\exp\left(|\beta| G(z)\right)=(F'(z))^{|\beta|}$, 
because $e^{iu} G(z)$ has same law as $G(z)$,  since each $G_n$ is a circularly-symmetric complex normal. So to study the integral means spectrum we only need to consider the  $\beta >0$ range. 

We thus have 
$$
|F'(z)|=\exp(U(z)),\quad U(z)=\Re\,G(z),
$$
and for $z=re^{i\theta}$ we write,
$$
U_r(\theta):=U(re^{i\theta})=\Re\, G(re^{i\theta})=\sum_{n=1}^\infty\frac{r^n}{\sqrt{2n}}\big(V_n\cos(n\theta)+W_n\sin(n\theta)\big).
$$
Thus $U$ is a random harmonic function on $\D$, and its restriction $U_r$ on the circle of radius $r$ is the natural aproximation of the  log-correlated field
$$
X(\theta):= \sum_{n=1}^\infty\frac{1}{\sqrt{2n}}\big(V_n\cos(n\theta)+W_n\sin(n\theta)\big).
$$

The covariance structure of these approximations is very easy to compute, and we obtain for any $z=re^{i\theta}, z'=r'e^{i\theta'}\in \D$
                  \begin{eqnarray}\label{eq:covs}
\E\, U(z)U(z')&=&\frac{1}{2}\sum_{n=1}^\infty \frac{r^nr'^n}{n}(\cos(n\theta)\cos(n\theta') + \sin(n\theta)\sin(n\theta'))=\frac{1}{2}\Re\left(\sum_{n=1}^\infty \frac{(z\overline{z'})^n}{n}\right) \nonumber\\
&=& \frac{1}{2}\log\frac{1}{|1-z\overline{z'}|}.
\end{eqnarray}

\newcommand\torus{{\mathbb T}}

In what follows we identify $\torus:=\partial\D=[0,2\pi)$ in a standard manner. The basic theory of GMC measures now implies that in the so-called subcritical range $\beta\in (0,2)$ there is the convergence in probability for the chaos approximations obtained from $U_r$:s,
$$
\mu_{\beta,r} (\torus):=\frac{1}{2\pi}\int_0^{2\pi}\exp \left(\beta U_r(\theta)-(\beta^2/2)\E\, U_r^2(\theta)\right)d\theta\stackrel{{\Prob}}{\longrightarrow} \mu_\beta (\torus),\quad\textrm{as}\;\;r\to 1^-.
$$
Recall that the multiplicative chaos  $\mu_\beta$ is a (random) Borel measure on the unit circle $\torus$ such that $\mu_\beta(\torus)\not=0$ almost surely.  As mentioned before, the construction of the chaos measure and the convergence in probability for convolution approximations as we need here can be found in \cite[Section 3-4, pp.~5-10]{B}. The assumptions in \cite{B} actually deal with compactly supported convolution kernels in $\R^d$, but the only properties the proof in \cite{B} uses for the field and the approximation are the following: 

\begin{itemize}
\item[\it 1.] First of all, for   $r,r'\in(0,1)$ and $\theta,\theta'\in [0,2\pi)$
$$
\E\, U(re^{i\theta})U(r'e^{i\theta'})=\frac{1}{2}\log\frac{1}{|\theta-\theta'|\vee (1-r)\vee(1-r')} + \mathcal O(1),
$$
where the distance $|\theta-\theta'|$ is understood mod $2\pi.$

\item[\it 2.] Secondly, for any fixed $\delta>0$ there is the uniform convergence  in the set $\{ \theta,\theta'\in [0,2\pi),\; |\theta-\theta'|\geq\delta\}$
$$
\E \, U(re^{i\theta})U(r'e^{i\theta'})\to a (\theta,\theta'),\quad\textrm{as}\;\;r,r'\to 1^-,
$$
where $a$ is continuous on $[0,2\pi)^2\setminus\{\theta=\theta'\}.$

\end{itemize}

Both of these conditions follow immediately from formula \eqref{eq:covs}. For our purposes, we further need some basic facts for the moments of the chaos approximations:
\begin{lemma}\label{le:gmc}

The approximations $\mu_{r,\beta} (\torus)$ satisfy
\begin{eqnarray}\label{eq:moments}
\E \, \mu_{\beta,r} (\torus)=1\qquad\textrm{and}\quad \E\, (\mu_{\beta,r} (\torus))^{-1}\leq C,
\end{eqnarray}
where the constant $C=C(\beta)$ is independent of $r\in (0,1).$ Especially, we have $\mu_\beta (\torus)\not=0$ almost surely.
\end{lemma}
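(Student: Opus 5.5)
The first identity is immediate from Fubini's theorem and Gaussianity. For each $\theta$, $U_r(\theta)$ is a centered Gaussian, so $\E\exp(\beta U_r(\theta))=\exp\big((\beta^2/2)\E U_r(\theta)^2\big)$. Integrating the normalized density over $[0,2\pi)$ and dividing by $2\pi$ gives $\E\,\mu_{\beta,r}(\torus)=1$. By rotation invariance, $\E U_r(\theta)^2=\frac12\log\frac{1}{1-r^2}$ does not depend on $\theta$, which we use below.

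For the negative moment, the plan is to show that $\mu_{\beta,r}(\torus)$ rarely becomes small, uniformly in $r$. The first attempt will be the direct Jensen trick. Writing $\mu_{\beta,r}(\torus)=\frac{1}{2\pi}\int e^{\beta U_r-\frac{\beta^2}{2}\E U_r^2}d\theta$ and applying Jensen to the convex function $x\mapsto 1/x$ gives
$$
(\mu_{\beta,r}(\torus))^{-1}\le \frac{1}{2\pi}\int_0^{2\pi}e^{-\beta U_r(\theta)+\frac{\beta^2}{2}\E U_r^2}d\theta .
$$
Its expectation is $e^{\beta^2\E U_r^2}=(1-r^2)^{-\beta^2/2}$, which blows up as $r\to1$. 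So Jensen alone fails, and this is the main obstacle.

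The fix is a multiscale small-ball estimate in the spirit of the Kahane convexity and scaling arguments used for GMC (cf.\ \cite{RV3}). We split $U_r=U_{r}^{(1)}+U_r^{(2)}$, where $U_r^{(1)}=\sum_{n\le N}$ carries the low frequencies and $U_r^{(2)}$ the rest; the two pieces are independent. The low-frequency part is a smooth Gaussian field with bounded variance, uniformly in $r$. Its supremum therefore has Gaussian tails, so $e^{\beta\sup|U^{(1)}|}$ has all moments. This reduces the question to the high-frequency part.

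For the high frequencies, we cut the circle into $N$ arcs $I_j$ of length $\asymp 1/N$. Up to $O(1)$ covariance errors, which Kahane's convexity inequality lets us absorb, the restricted measures $\mu^{(2)}_{\beta,r}(I_j)$ behave like weakly dependent copies of $N^{-1}\mu'_{\beta,r'}(\torus)$ at a rescaled radius $r'$. This yields a bound of the form
$$
\P\big(\mu_{\beta,r}(\torus)<\epsilon\big)\le C\,\P\big(\mu_{\beta,r'}(\torus)<C\epsilon N\big)^{cN}+\P\big(\sup|U^{(1)}|\gtrsim \log(1/(\epsilon N))\big).
$$
Iterating this bound, or more simply choosing $N\asymp \log(1/\epsilon)$, gives a tail $\P(\mu_{\beta,r}(\torus)<\epsilon)\le C\epsilon^{2}$ uniformly in $r$. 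The negative first moment is then finite uniformly in $r$.

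To start the recursion we need a uniform lower bound $\P(\mu_{\beta,r}(\torus)>c)\ge c$. This follows from Paley--Zygmund once we have a second moment bound, which is easy for $\beta<\sqrt2$ using the covariance \eqref{eq:covs}. For general $\beta\in(0,2)$ we would instead use Paley--Zygmund with a truncated, or thick-point-restricted, measure, together with $\E\mu_{\beta,r}(\torus)=1$.

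Finally, since $\mu_{\beta,r}(\torus)\to\mu_\beta(\torus)$ in probability, Fatou's lemma applied along an almost surely convergent subsequence gives $\E\,\mu_\beta(\torus)^{-1}\le C<\infty$. Hence $\mu_\beta(\torus)\ne0$ almost surely.
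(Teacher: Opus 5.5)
Your proposal is correct in outline but takes a genuinely different route from the paper. The paper does no multiscale analysis at all. It imports the uniform negative-moment bound for one specific reference log-correlated field (Astala--Jones--Kupiainen--Saksman, Appendix B). It then transfers that bound to $U_r$ by a single application of Kahane's convexity inequality to the convex function $x\mapsto 1/x$. Since $\E\,U(z)U(z')$ matches the reference covariance up to $O(1)$ (property 1 above), one adds an independent constant Gaussian $\Omega$ of variance $K$ to the reference field so that its covariance dominates that of $U_r$. This multiplies the total mass by $e^{\beta\Omega-\beta^2K/2}$, which costs only a factor $e^{\beta^2K}$ in $\E\,\mu^{-1}$, and the same comparison also applies to the limit $\mu_\beta(\torus)$.

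Your argument instead re-derives the bound from scratch, via one decoupling step at scale $N\asymp\log(1/\epsilon)$ plus a uniform starting bound. This is self-contained and quantitative: taking the constant in $N\asymp\log(1/\epsilon)$ large gives $\P(\mu_{\beta,r}(\torus)<\epsilon)\le C_q\epsilon^q$ for every $q$. But it is essentially the scaling argument behind the result the paper cites, and it is much longer.

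If you pursue it, the decoupling step must be reformulated, because as written it does not follow from Kahane's inequality. That inequality only compares $\E F(\text{total mass})$ for convex $F$. It says nothing about probabilities of joint events on different arcs, and ``weakly dependent'' does not produce the product $\P(\cdot)^{cN}$. Work instead with $F(x)=e^{-\lambda x}$:
\begin{itemize}
\item keep only every other arc, so distinct arcs are at distance at least $2\pi/N$, where Abel summation gives $|\E\,U^{(2)}(\theta)U^{(2)}(\theta')|=O(1)$;
\item compare $U^{(2)}$ there with independent copies of $U^{(2)}$ on each arc, plus an independent global Gaussian $\Omega$, so that conditionally on $\Omega$ the Laplace transform factorizes;
\item compare a single arc with $N^{-1}\mu_{\beta,r'}(\torus)$, where $1-r'=N(1-r)$, by a further Kahane step; this goes in the right direction, since circular distance never exceeds arc distance, so the rescaled circle covariance is at least the arc covariance up to $O(1)$;
\item return to tails through $\P(X<\epsilon)\le e\,\E\, e^{-X/\epsilon}$, paying an extra Gaussian term for $\Omega$.
\end{itemize}

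Two smaller points. First, once $N\asymp\log(1/\epsilon)$ the low-frequency part no longer has bounded variance: it is about $\frac12\log N$, and its renormalization contributes $N^{-\beta^2/4}$. This is only polylogarithmic in $1/\epsilon$ and is harmless. Second, for $\beta\in[\sqrt2,2)$ the starting bound $\P(\mu_{\beta,r}(\torus)>c)\ge c$ is most easily obtained from the uniform integrability in Berestycki's argument, combined with $\E\,\mu_{\beta,r}(\torus)=1$. Your Fatou step for $\mu_\beta(\torus)$ is fine.
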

\begin{proof}
 That $\E\, \mu_{\beta,r} (\torus)=1$ follows immediately from the definition, and the uniform finiteness of the negative moments is well known. To indicate the proof, we note that,  {\it e.g.}, in \cite[Appendix B]{AJKS} it is proven for a particular log-correlated field, and it then immediately follows by  easy comparision by using Kahane's convexity inequality both for $\mu_{\beta} (\torus)$ and its approximations $\mu_{\beta,r} (\torus)$. Here one recalls that the function $x\to 1/x$ is convex, and one may additionally make use of the trick of adding an independent constant Gaussian field to the reference field if needed in order to facilitate the comparison.
\end{proof}
\begin{corollary}\label{cor:helppo}
Denote $r_n=1-e^{-n}$ for integers $n\geq 1$. For any $\beta\in (0,2)$ and any $\varepsilon >0$ there is a random index $n_0$, finite almost surely, so that
$$
\exp\big((\beta^2/4 -\varepsilon)n\big) \leq \frac{1}{2\pi} \int_0^{2\pi}\exp \big(\beta U_{r_n}(\theta)\big) d\theta \leq \exp\big((\beta^2/4 +\varepsilon)n\big)\qquad\textrm{for}\quad n\geq n_0.
$$
\end{corollary}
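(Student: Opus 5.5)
We write the integral as a normalized chaos mass times a deterministic factor. By \eqref{eq:covs}, $\E\, U_r(\theta)^2=\frac12\log\frac{1}{1-r^2}$. Hence
$$
\frac{1}{2\pi}\int_0^{2\pi}e^{\beta U_{r}(\theta)}d\theta=\mu_{\beta,r}(\torus)\,(1-r^2)^{-\beta^2/4}.
$$
For $r=r_n=1-e^{-n}$ we have $(1-r_n^2)^{-\beta^2/4}=e^{n\beta^2/4}(2-e^{-n})^{-\beta^2/4}$. The second factor is bounded above and below uniformly in $n$. So the corollary reduces to the almost sure statement
$$
e^{-\varepsilon n/2}\leq \mu_{\beta,r_n}(\torus)\leq e^{\varepsilon n/2}\quad\text{for all large } n.
$$

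We obtain both inequalities from Lemma \ref{le:gmc} via Markov's inequality and Borel--Cantelli along the sequence $r_n$.

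For the upper bound, $\E\,\mu_{\beta,r_n}(\torus)=1$ gives
$$
\P\big(\mu_{\beta,r_n}(\torus)>e^{\varepsilon n/2}\big)\leq e^{-\varepsilon n/2},
$$
which is summable in $n$.

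For the lower bound, we apply Markov's inequality to $(\mu_{\beta,r_n}(\torus))^{-1}$, using $\E\,(\mu_{\beta,r_n}(\torus))^{-1}\leq C(\beta)$ uniformly in $r$:
$$
\P\big(\mu_{\beta,r_n}(\torus)<e^{-\varepsilon n/2}\big)\leq C(\beta)\,e^{-\varepsilon n/2},
$$
which is again summable.

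By Borel--Cantelli, almost surely only finitely many of these events occur. This gives a random $n_0$, finite almost surely. Enlarging $n_0$ deterministically absorbs the bounded factor $(2-e^{-n})^{-\beta^2/4}$ into the remaining $e^{\pm\varepsilon n/2}$ margin.

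The only nontrivial input is the uniform negative-moment bound in Lemma \ref{le:gmc}; it is precisely what makes the lower bound summable, and we take it as given. Note also that the convergence of $\mu_{\beta,r}$ to $\mu_\beta$ is not used at all. The moment bounds along the sparse sequence $r_n$ suffice, since the exponential decay in $n$ of the tail probabilities makes them summable. No difficulty arises from the restriction $\beta\in(0,2)$ beyond its role in Lemma \ref{le:gmc}.
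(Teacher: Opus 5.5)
Your proof is correct and matches the paper's argument. You factor out the deterministic normalization $(1-r_n^2)^{-\beta^2/4}=e^{n\beta^2/4}(2-e^{-n})^{-\beta^2/4}$, bound the upper tail of $\mu_{\beta,r_n}(\torus)$ by Markov's inequality with $\E\,\mu_{\beta,r_n}(\torus)=1$, bound the lower tail by Markov's inequality with the uniform negative-moment bound of Lemma \ref{le:gmc}, and conclude by Borel--Cantelli; your explicit $\varepsilon/2$ margin for the bounded factor is a slightly tidier version of the paper's $C_n(\beta)$ bookkeeping.
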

\begin{proof}
We have from \ref{eq:covs}
$$\E\, U^2_{r_n}(\theta)=\frac{1}{2}(n- \log 2)+\mathcal O(e^{-n}).$$
By the first moment estimate in \eqref{eq:moments} and Markov's inequality we therefore have 
\begin{eqnarray*}
\Prob\left(\frac{1}{2\pi} \int_0^{2\pi}\exp \big(\beta U_r(\theta)\big)d\theta \geq \exp\big((\beta^2/4 +\varepsilon)n\big)\right)&&= 
\Prob\left(C_n(\beta) \mu_{\beta,r} (\torus) \geq \exp(\varepsilon n)\right)\\
&&\leq C_n(\beta)  e^{-n\varepsilon},
\end{eqnarray*}
where $C_n(\beta)=2^{-\beta^2/4}+ \mathcal O(e^{-n})$. One finishes by an application of Borel-Cantelli.
An analogous lower estimate follows by invoking the negative moment bound  in \eqref{eq:moments}. \end{proof}

Another basic result for approximations of log-correlated fields we need is an easy result from the (extensively studied) behaviour of the maxima of a log-correlated field. For us the following crude result is enough.
\begin{lemma}\label{le:max}
Assume that $\varepsilon >0$. There is a random $r_0$ such that $r_0<1$ almost surely, and for all $r\in [r_0,1)$ and $\theta\in [0,2\pi)$ it holds that
$$
U(re^{i\theta})\leq (1+\varepsilon)\log(1/(1-r)).
$$
\end{lemma}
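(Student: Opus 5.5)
The plan is a chaining argument over dyadic annuli and angular grids, which is the disc analogue of Lemma \ref{3.lem1} and Lemma \ref{2.2.lem2}. Put $r_n=1-2^{-n}$ and consider the annulus $A_n=\{re^{i\theta}: r_{n}\le r\le r_{n+1}\}$. Cover it by $2^{n}$ boxes $Q_{n,\ell}=[r_n,r_{n+1}]\times[2\pi(\ell-1)2^{-n},2\pi\ell 2^{-n}]$, writing $z_{n,\ell}=r_ne^{2\pi i(\ell-1)2^{-n}}$ for the corner of the $\ell$-th box. By \eqref{eq:covs}, $\E\,U(z)^2=\frac12\log\frac1{1-|z|^2}\le \frac12 n\log 2+O(1)$ on $A_n$. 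The Gaussian tail then gives
$$
\P\big(U(z_{n,\ell})\ge (1+\varepsilon/2)n\log 2\big)\le C\exp\Big(-\frac{(1+\varepsilon/2)^2(n\log 2)^2}{n\log 2+O(1)}\Big)\le C2^{-n(1+\varepsilon/2)^2}.
$$

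Next I control the oscillation of $U$ inside each box. For $z,z'\in Q_{n,\ell}$, \eqref{eq:covs} gives
$$
\E|U(z)-U(z')|^2=\frac12\log\frac{|1-z\bar z'|^2}{|1-|z|^2||1-|z'|^2|}\le C,
$$
because $|1-z\bar z'|\lesssim 2^{-n}$ and $1-|z|^2\asymp 2^{-n}$. More precisely, the increments satisfy $\E|U(z)-U(z')|^2\le C2^{n}|z-z'|$ at scale $2^{-n}$. Dudley's entropy bound, or Kolmogorov chaining in rescaled coordinates where the box becomes a unit square, then gives $\E\sup_{Q_{n,\ell}}|U-U(z_{n,\ell})|\le C_0$ uniformly in $n$ and $\ell$. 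The Borell--TIS inequality, with $\sup$ variance bounded by $C$, turns this into
$$
\P\Big(\sup_{Q_{n,\ell}}|U-U(z_{n,\ell})|\ge C_0+t\Big)\le e^{-t^2/(2C)}.
$$
Taking $t=(\varepsilon/4)n\log 2$ makes this superexponentially small in $n$.

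Finally I combine the two estimates with a union bound over $\ell\le 2^n$:
$$
\P\Big(\sup_{A_n}U\ge (1+\tfrac34\varepsilon)n\log2+C_0\Big)\le 2^n\big(C2^{-n(1+\varepsilon/2)^2}+e^{-cn^2}\big)\le C'2^{-n\varepsilon}.
$$
This is summable in $n$, so Borel--Cantelli yields a random $n_0$ such that for $n\ge n_0$ and $z\in A_n$,
$$
U(z)\le (1+\tfrac34\varepsilon)n\log2+C_0\le (1+\varepsilon)\log\frac{1}{1-|z|},
$$
since $\log\frac1{1-|z|}\ge n\log 2$ on $A_n$. Set $r_0=r_{n_0}$.

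The main obstacle is the uniform oscillation bound in the second step, specifically verifying the increment estimate from \eqref{eq:covs} in both the radial and angular directions. If Dudley is awkward to apply here, the fallback is to mimic the dyadic chaining of Lemma \ref{2.2.prop2}(ii), which only needs Gaussian increment tails. For Gaussians those tails are immediate from the variance bound.
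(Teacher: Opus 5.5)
Your proof is correct, but it follows a different route from the paper. The paper disposes of the lemma by citing a sharp result on maxima of log-correlated fields. It applies \cite[Theorem 1.1]{A} on the circles of radius $r_n=1-e^{-n}$ to get $\P\big(\max_\theta U(r_ne^{i\theta})>n+(\log n)^2\big)\le e^{-c(\log n)^2}$, which is summable. It then uses the maximum principle for the harmonic function $U$ on each annulus $r_n\le|z|\le r_{n+1}$ to pass from the circles to all radii.

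You instead prove the leading-order bound from scratch. You combine a first-moment union bound over $2^n$ base points per dyadic annulus with a uniform oscillation bound on boxes of size $2^{-n}$ (Dudley and Borell--TIS). Since your boxes tile the annulus in two dimensions, you never need harmonicity.

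The paper's route is shorter and gives much finer control: fluctuations of order $(\log n)^2$ above $n$, instead of $\varepsilon n$. However, it relies on a nontrivial external theorem whose hypotheses must be checked for $U_r$. Yours is self-contained, uses only Gaussian tools, and is the disc analogue of Lemma~\ref{3.lem1}. That is all the crude $(1+\varepsilon)$ statement requires.

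The step you flag as the main obstacle is in fact immediate. The identity $|1-z\bar z'|^2=(1-|z|^2)(1-|z'|^2)+|z-z'|^2$ turns your increment formula into
\[
\E|U(z)-U(z')|^2=\frac12\log\Big(1+\frac{|z-z'|^2}{(1-|z|^2)(1-|z'|^2)}\Big)\le C\,2^{2n}|z-z'|^2 \qquad (z,z'\in Q_{n,\ell}).
\]
So the canonical metric is Lipschitz in the rescaled coordinates, uniformly in $n$ and $\ell$.

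Finally, the last inequality $(1+\tfrac34\varepsilon)n\log 2+C_0\le(1+\varepsilon)n\log2$ needs $n\ge 4C_0/(\varepsilon\log 2)$. You should build this deterministic threshold into $n_0$.
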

\begin{proof} This is well-known and follows easily, {\it e.g.}, from \cite[Theorem 1.1]{A}  by applying the estimate there [with in our case the identification $\varepsilon\equiv 1-r$, and $m_{1-r}=\log (1/(1-r))-3/4 \log \log (1/(1-r))$], to the sequence of radii $r_n=1-e^{- n}$ to obtain
$$
\P\left(\max_{\theta\in [0,2\pi]}U(r_ne^{i\theta}) >\log (1/(1-r_n))+\lambda_n^2\right)\leq \exp(-c \lambda_n^2),
$$
with the choice $\lambda_n=\log \log (1/(1-r_n))=\log n$. Since $\sum_{n=1}^\infty \exp(-c\lambda_n^2) <\infty,$ we obtain the claim by using the Borel-Cantelli lemma and the fact that the maximum of the harmonic function $U(re^{i\theta})$ in the annulus $r_n \leq r <r_{n+1}$ is obtained at the boundary.
\end{proof}
We are now ready to prove Theorem \ref{th:prob}.
\begin{proof}
As seen above, by circular invariance we need only to study the $\beta>0$ range. Let us consider, for $r\in (0,1)$, the function $\phi_r: (0,\infty)\to(0,\infty),$ where
$$
\phi_r(\beta)=  \frac{1}{\log \big(1/(1-r)\big)}\log\Big( \int_0^{2\pi}\big| F'(re^{i\theta})\big|^\beta d\theta\Big).
$$
We first consider a fixed $\beta\in (0,2)$. Corollary \ref{cor:helppo} implies that along the sequence $r_n$ of radii, $\phi_{r_n}(\beta)$ has almost surely the stated limit $\beta^2/4$, up to an additive term $\pm\varepsilon$. Since this holds for all $\varepsilon >0$, we obtain the desired limit statement among this sequence of radii. Finally, we observe that the function $r\mapsto   \int_0^{2\pi}\big| F'(re^{i\theta})\big|^\beta d\theta$ is increasing in $r$ (see \cite[p. 38 or Lemma 1.1]{G}), which immediately implies the statement for all radii.

By the log-convexity of the H\"older norm, for each $r\in (0,1)$ our $\phi_{r}$ is a convex function of $\beta\in (0,\infty)$. According to Corollary \ref{cor:helppo},  we know that it almost surely converges to the function $\beta^2/4$ for all $\beta\in (0,2)\cap\mathbb Q$ as $r\to 1$ via the sequence $(r_n)_n$. This then implies convergence through all $r\nearrow 1$ again by the previously mentioned increasing nature of the $\beta$-means. On the other hand, we have by Lemma \ref{le:max} that $\limsup \phi_r(\beta)\leq \beta -1$ for all $\beta>0$. These two facts and a standard convexity argument then imply our theorem for all $\beta >0$ (note that the derivative of the map $\beta\to \beta^2/4$ equals 1 at $\beta=2$).
\end{proof}

\noindent {\bf Remarks.}\begin{itemize}
\item
Actually the only non-trivial ingredient  we use from GMC theory above is  the uniform bound for negative moments.\
 \item Weaker  results in the case $\beta\geq 2$ naturally could be deduced also from the theory of critical chaos and supercritical chaos, but this would be  technically much more complicated.
 
\end{itemize}

\subsection{Alternative treatment of the randomized Riemann zeta model via GMC}
Here we sketch without all details how one may also use the approach of Section \ref{subse:unit disc} to treat the random analytic function $\zeta_\rand$ we considered in Theorem \ref{th:zeta} in the first part of the present paper.
According to \cite[Theorem 1.7]{SW}, we may write the logarithm of the randomized zeta function as
\be
\label{Es}
\log (\zeta_\rand(s))= G_0(s)+E(s),
\ee
where $E(s)$ is almost surely bounded in any compact subset of $\{s=\s+it, \sigma\geq 1/2, t\in \R\}$ and $G_0$ is a  \emph{Gaussian} analogue of the part $ \sum_{p \in \mathcal P}U_pp^{-s}$ of the randomized zeta function:
$$
G_0(s) =\sum_{p\in \mathcal P} p^{-s}W_p,
$$
where the $W_p$:s are complex i.i.d. Gaussians. Naturally $G_0$ is almost surely analytic in $\{\sigma>1/2\}$, and on the boundary $\{\sigma=1/2\}$ its real part (as its  imaginary part) defines a log-correlated Gaussian field. We are thus interested in the real part 
$$
G(s) :={\Re}G_0(s)={\Re}\Big(\sum_{p\in \mathcal P} p^{-s}W_p\Big),
$$
and on  the integral means of $\exp (G(s))$. The real part is the harmonic extension to  $\{\sigma>1/2\}$ of the log-correlated field $G|_{\{\sigma=1/2\}}$, and it has the covariance  
$$
C(s,s'):=\E\, \Re G_0(s)\Re G_0(s')= \frac{1}{2}{\Re}\Big(\sum_{p\in \mathcal P}p^{-s-\overline {s'}}\Big),\qquad \sigma,\sigma'>1/2.
$$
We collect all the properties of the covariance we need in the following statement, where $\zeta$ denotes the \emph{classical} zeta function.
\begin{lemma}\label{le:cov} {\rm (i)}\quad We have
\begin{eqnarray*}
C(s,s')&=&\frac{1}{2}{\Re}\big(\log(\zeta(s+\overline {s'}))\big) + b_0(s,s')\\
&=& \frac{1}{2} \log\frac{1}{|s+\overline {s'}-1|} +b(s,s')
\end{eqnarray*}
where $b_0,b\in C^\infty(\{\sigma,\sigma'\in [1/2,1],\;\; t,t'\in [-1,1]\})$.

\smallskip

\noindent {\rm (ii)}\quad We have for all $\sigma, \sigma' >1/2$ and $t,t'\in [-1,1]$
$$
C(s,s')=\frac{1}{2}\log\frac{1}{|t-t'|\vee (\sigma-1/2)\vee(\sigma'-1/2)} + \mathcal O(1).
$$
\smallskip

\noindent {\rm (iii)} For any fixed $\delta>0$ there is uniform convergence  in the set $\{\sigma,\sigma'\in (1/2,1],\;\; t,t'\in [-1,1],\; |t-t'|\geq\delta\}$
$$
C(s,s')\to a (t,t'),\quad\textrm{as}\;\;\sigma,\sigma'\to (1/2)^+,
$$
where $a$ is continuous on $[-1,1]^2\setminus\{t=t'\}.$
\end{lemma}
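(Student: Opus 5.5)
We start from the definition $C(s,s')=\frac12\Re\sum_{p}p^{-w}$ with $w:=s+\overline{s'}$. Note that $\Re w=\sigma+\sigma'>1$, so the series converges absolutely. We compare it with the Euler product of the classical zeta function. For $\Re w>1$,
$$
\log\zeta(w)=\sum_{p}\sum_{k\ge1}\frac{p^{-kw}}{k},
\qquad\text{so}\qquad
\sum_p p^{-w}=\log\zeta(w)-\sum_p\sum_{k\ge2}\frac{p^{-kw}}{k}.
$$
We set $b_0(s,s'):=-\frac12\Re\sum_p\sum_{k\ge2}p^{-kw}/k$. On the region $\{\sigma,\sigma'\in[1/2,1]\}$ we have $\Re w\ge 1$, so $\Re(kw)\ge 2$ for $k\ge2$. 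The double series therefore converges absolutely and uniformly there, and so does every derivative in $(\sigma,\sigma',t,t')$ term by term. Each derivative only brings down polynomial factors in $k\log p$, and $\sum_p\sum_{k\ge 2}(k\log p)^m p^{-k}/k<\infty$. Hence $b_0\in C^\infty$, which gives the first line of (i).

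For the second line we use the classical fact that $(w-1)\zeta(w)$ is entire and nonvanishing on the closed set $\{\Re w\ge1,\ |\Im w|\le 2\}$. Nonvanishing at $w=1$ comes from the residue $1$, and nonvanishing elsewhere on the line $\Re w=1$ is the zero-free property behind the prime number theorem. On this compact set, which is the image of our parameter region (where $\Re w\in[1,2]$ and $\Im w=t-t'\in[-2,2]$), and on a simply connected neighbourhood of it avoiding zeros, we can choose an analytic branch $L(w)$ of $\log((w-1)\zeta(w))$ that is real at real $w$ and agrees with the Euler-product logarithm where both are defined. Then
$$
\Re\log\zeta(w)=\log|\zeta(w)|=-\log|w-1|+\Re L(w).
$$
Composing with the smooth map $(s,s')\mapsto w$ gives $b:=b_0+\frac12\Re L(w)\in C^\infty$.

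Part (ii) follows from (i). We have $|w-1|=|(\sigma-\tfrac12)+(\sigma'-\tfrac12)+i(t-t')|$, and this is comparable, up to absolute constants (between $\max$ and $3\max$), to $|t-t'|\vee(\sigma-\tfrac12)\vee(\sigma'-\tfrac12)$. So the logarithms differ by $\mathcal O(1)$, and $b$ is bounded on the compact region.

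For (iii), note that $b$ is continuous up to $\sigma=\sigma'=1/2$ by (i). When $|t-t'|\ge\delta$, the term $\frac12\log(1/|w-1|)$ is a continuous function of $(\sigma,\sigma',t,t')$ on a compact set where $|w-1|\ge\delta$. Uniform continuity then gives uniform convergence of $C(s,s')$ to
$$
a(t,t')=\tfrac12\log\frac1{|t-t'|}+b\bigl(\tfrac12+it,\tfrac12+it'\bigr).
$$

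The only substantive step is the choice of a global analytic logarithm of $(w-1)\zeta(w)$ up to the line $\Re w=1$. This requires nonvanishing of $\zeta$ on $\Re w=1$ for $|\Im w|\le2$, which we quote from the classical theory. It must also be checked that this branch matches the Euler-product logarithm on $\Re w>1$; both branches are continuous and real on $(1,\infty)$, so they coincide by connectedness.
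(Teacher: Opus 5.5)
Your proposal is correct and follows essentially the paper's route: remove the $k\ge 2$ prime-power terms to get $\frac12\Re\log\zeta(s+\overline{s'})$ plus a smooth remainder, use that $(w-1)\zeta(w)$ is analytic and zero-free near $[1,2]\times[-2,2]$, and deduce (ii) and (iii) from (i). Your direct bound $\max\{|t-t'|,\sigma-\frac12,\sigma'-\frac12\}\le|w-1|\le 3\max\{|t-t'|,\sigma-\frac12,\sigma'-\frac12\}$ is a slightly cleaner version of the paper's two-step comparison; the branch-matching you single out is unnecessary, since only $\log|(w-1)\zeta(w)|$ enters, which is smooth wherever $(w-1)\zeta(w)\neq0$, so the real input is just the zero-free property of $\zeta$ on $\Re w=1$.
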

\begin{proof}
We note towards (i) that 
\begin{eqnarray*}
C(s,s') 
= \frac{1}{2}{\Re}\big(\log(\zeta(s+\overline {s'}))\big)- \frac{1}{2}{\Re}\big(\widetilde{B}(s+\overline{s})\big),
\end{eqnarray*}
where the Dirichlet series $\widetilde{B}(s):=\sum_{k\geq 2}\frac{1}{k}\sum_{p\in \mathcal P}p^{-ks}$ converges at any $\s\in (1/2,\infty)$, and hence defines an analytic function on $\{\sigma >1/2\}$ by Jensen's lemma. This proves the first statement in (i), and the second one follows by observing that $(s-1)\zeta(s)$ is analytic and non-zero in a neighborhood of the rectangle $ [1,2]\times [-2,2]$.

In order to check (ii) we need to consider only the logarithmic term and we assume first that $\sigma>\sigma'$. Then 
$$
\big| \log |\sigma+\sigma'-1+i(t-t')|-\log |2\sigma-1+i(t-t')|\big| \leq \log \left(\frac{\sigma+\sigma'-1}{2\sigma-1}\right)\leq \log 2.
$$
Finally, it is obvious that  $|\log |2\sigma-1+i(t-t')|-\log\big((\sigma-1/2)\vee |t-t'|\big)|\leq {3\log 2},$ and (ii) follows. In turn, (iii) is a direct consequence of part (i).
\end{proof} 

We are thus now in position to define the renormalized GMC measure for $\b\in (0,2)$ and $\s>1/2$,
\be\label{GMC}
\mu_{\beta,\s} (I):=\int_I\exp \left(\beta \Re G_0(s)-(\beta^2/2)C(s,s)\right)|ds|,
\ee
for any vertical line segment $I=I(h,h')=\{s=\s+it, t\in [h,h']\}$, and where, owing to (ii) in Lemma  \ref{le:cov},
\be\label{Css}
C(s,s)=C(\s,\s)=\log(1/(\s-1/2))+\mathcal O(1)\;\; \textrm{as}\;\; \s\to (1/2)^+.
\ee
Because of Lemma \ref{le:cov} and \cite{B}, we have the convergence in probability,
$$
\mu_{\beta,\s} (I)\stackrel{{\Prob}}{\longrightarrow} \mu_\beta (I),\;\;\textrm{as}\;\;\s\to (1/2)^+,
$$
where $ \mu_\beta(I)\neq 0$  almost surely.

After this we are essentially back in a situation where we need to consider the divergence of the integral means defined via Gaussian log-correlated fields:  indeed, from this point onwards the proof is almost completely analogous to what we did in Section \ref{subse:unit disc}. There is only one item of the argument that will not carry over immediately: the integral means over the segment $\{\sigma\}\times [0,1]$ need not be strictly increasing in decreasing $\sigma>1/2$. Hence we need to do some `juggling' to overcome this phenomenon. In order to  tackle this technical detail we first write down a somewhat crude estimate for the growth of $\zeta_\rand$ near the boundary:

\begin{lemma}\label{le:kasvu} For $\varepsilon >0$, there is $r>0$ and a random constant $C>0$ such that almost surely
$$
|\Re G_0(s)|\leq C+ (1+\varepsilon) \log(1/(\sigma-1/2))+r\log(1+|t|)\quad \textrm{for all}\quad t\in\R \;\; \textrm{and}\;\; \sigma \in (1/2,1].
$$
\end{lemma}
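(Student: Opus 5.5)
We prove Lemma \ref{le:kasvu} with a chaining and Borel--Cantelli argument for the Gaussian field $\Re G_0$. The argument runs along the same lines as Lemma \ref{le:max}, but uses only crude Gaussian concentration instead of the sharp maximum asymptotics. By symmetry of the field it suffices to bound $\Re G_0$ from above; the bound for $-\Re G_0$ is identical.

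\emph{Grid and pointwise tails.} Cover the half-strip $\{\sigma\in(1/2,1],\, t\in\R\}$ by the boxes
\[
Q_{n,j}=[\sigma_n,\sigma_{n-1}]\times[j2^{-n},(j+1)2^{-n}],\qquad n\geq 1,\ j\in\Z,
\]
with $\sigma_n=\frac12+2^{-(n+1)}$ as in \eqv(3.lem1.01). By translation invariance in $t$ (the law of $(W_p p^{-it})_p$ does not depend on $t$) and Lemma \ref{le:cov}(ii), the variance of $\Re G_0$ at any point of $Q_{n,j}$ is
\[
C(s,s)=\tfrac12\, n\log 2+\OO(1).
\]
Hence for each grid point $s$ and each $x>0$,
\[
\P\bigl(\Re G_0(s)>x\bigr)\leq \exp\bigl(-x^2/(n\log 2+\OO(1))\bigr).
\]

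\emph{Oscillation within a box.} Next I would control $\max_{Q_{n,j}}\Re G_0-\Re G_0(s_{n,j})$, where $s_{n,j}$ is a corner of the box. The increment variances satisfy
\[
\E\,|\Re G_0(s)-\Re G_0(s')|^2\lesssim 2^{2n}|s-s'|^2\sum_p(\log p)^2p^{-2\sigma_n}\lesssim 2^{2n}|s-s'|^2 2^{2n}\cdot 2^{-2n}.
\]
This bound follows from Lemma \ref{rk1} with $m=2$, exactly as in Lemma \ref{2.2.lem3}. It is $\OO(1)$ for $|s-s'|\leq 2^{-n}$. Dudley's entropy bound, or the dyadic chaining of Lemma \ref{2.2.prop2}(ii) repeated verbatim with Gaussian tails in place of the $y^{3/2}$ tails, then gives
\[
\P\Bigl(\max_{Q_{n,j}}|\Re G_0-\Re G_0(s_{n,j})|\geq y\Bigr)\leq c\,e^{-c'y^2}.
\]

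\emph{Union bound.} Set
\[
x_{n,j}=(1+\varepsilon)\,n\log 2+r\log(2+|j|2^{-n}),
\]
and consider the events that the maximum over $Q_{n,j}$ exceeds $x_{n,j}$. Splitting this as the grid-point value exceeding $x_{n,j}-\eta n$ or the oscillation exceeding $\eta n$, the probability is at most
\[
2^{-n(1+\varepsilon')}\,(2+|j|2^{-n})^{-2r(1+\varepsilon')/\log 2\,\cdot\,\OO(1)}+c\,e^{-c'\eta^2n^2}.
\]
Here we used
\[
\frac{x^2}{n\log 2}\geq (1+\varepsilon)^2 n\log 2+2(1+\varepsilon)\,r\log(2+|j|2^{-n}),
\]
so the polynomial decay in $|j|2^{-n}$ has exponent about $2r$. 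Summing over $j$ amounts to $2^n$ times $\int(2+u)^{-2r}du$, which gives $\sum_j\lesssim 2^{n}\cdot 2^{-n(1+\varepsilon')}$ once $r>1$ is fixed large. The summability of the oscillation terms over $j$ requires replacing $\eta n$ by $\eta n+\log(2+|j|)$, which is harmless. The total is summable in $n$, so by Borel--Cantelli, almost surely only finitely many boxes are exceptional.

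On the finitely many exceptional boxes, and on the compact region $\sigma\in[\sigma_{n_0},1]$ for bounded $t$, continuity of $\Re G_0$ gives a random constant. For $\sigma\in[\sigma_{n_0},1]$ with large $|t|$, the same box estimate with $n$ fixed still applies. Finally, $n\log 2\leq\log(1/(\sigma-1/2))+\OO(1)$ on $Q_{n,j}$, which converts the bound into the stated form.

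The main obstacle is organizing the union bound over infinitely many $t$-boxes, since the field is stationary in $t$ and so a $\log|t|$ loss is unavoidable. I expect the $r\log(1+|t|)$ term to absorb this, and the step needing most care is choosing the oscillation threshold growing with $\log(2+|j|)$ so that both sums converge simultaneously.
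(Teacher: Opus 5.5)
Your proof is correct in outline but takes a different route from the paper. The paper uses unit-height boxes $(1/2+2^{-n-1},1/2+2^{-n}]\times(j,j+1]$ and controls the maximum only on their boundaries; the maximum principle for the harmonic function $\Re G_0$ then covers the interior. On the vertical sides it uses the sharp tail bound for the maximum of a log-correlated field from \cite[Theorem 1.1]{A}, which gives $\P(\max\geq n\log 2+\lambda)\leq Ce^{-c\lambda}$. On the horizontal sides it uses a Lipschitz-covariance supremum bound. It then concludes by Borel--Cantelli with $\lambda_{j,n}=(2/c)\log(|j|+1)+(\log n)^2$.

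You instead use boxes of height $2^{-n}$, the pointwise Gaussian tail with variance $\frac12 n\log 2+\OO(1)$, and a chaining bound for the $\OO(1)$-variance oscillation inside each box. The $\varepsilon$ in the leading constant pays for the $2^n$ boxes per unit of $t$. This is a first-moment union bound, the same mechanism as in the proof of Lemma \ref{3.lem1}. It is self-contained given Lemma \ref{le:cov}(ii) and Lemma \ref{rk1}, and it delivers exactly the $(1+\varepsilon)$ the statement asks for. The paper's route gives the sharper bound $\log(1/(\sigma-1/2))+\OO((\log n)^2)$, but it leans on a nontrivial external maximum estimate.

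One step is wrong as written: the oscillation threshold $\eta n+\log(2+|j|)$, which you call harmless. Since $j$ runs up to about $2^n|t|$, once $|t|\gtrsim 1$ you have $\log(2+|j|)\approx n\log 2+\log|t|$. Subtracting this from $x_{n,j}$ leaves a grid-point threshold of only about $(\varepsilon-\eta/\log 2)\,n\log 2$. The $2^n$ boxes with $t\in[1,2]$ then contribute roughly $2^{n(1-(\varepsilon-\eta/\log 2)^2)}$, which is not summable.

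The fix is to measure the $t$-position rather than the box index: take $y_{n,j}=\eta n+\log(2+|j|2^{-n})$. Since $\sum_j e^{-c'\log^2(2+|j|2^{-n})}\lesssim 2^n$, the oscillation terms sum to $\lesssim 2^n e^{-c'\eta^2 n^2}$. The grid-point threshold becomes $(1+\varepsilon-\eta/\log 2)\,n\log 2+(r-1)\log(2+|j|2^{-n})$, so the extra loss is absorbed by enlarging $r$.

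Two minor points. First, your increment-variance display carries a spurious factor $2^{2n}$; the correct bound is $\lesssim |s-s'|^2\sum_p(\log p)^2p^{-2\sigma_n}\lesssim 2^{2n}|s-s'|^2$. Second, for the finitely many small $n$, summability in $j$ holds for a different reason: for fixed $n$ the Gaussian tail decays like $\exp(-c\log^2(2+|j|2^{-n})/n)$, which is superpolynomial.
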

\begin{proof} Let us  set  $Q_{n,j}=(1/2+2^{-n-1},1/2+2^{-n}]\times (j, j+1]$ for $n\geq 1$ and $j\in\Z$. We may apply  (say)
\cite[Theorem 1.1]{A} separately on  each vertical boundary segment. On a horizontal boundary segment $[1/2+2^{-n-1}+ij,1/2+2^{-n}+ij]$ we may consider for $x\in[0,1]$ the random field $x\mapsto  Y_j(x):=\Re G_0(1/2+2^{-n-1}(1+x)+ij)- \Re G_0(1/2+2^{-n-1}+ij)$. By \ref{le:cov} (i), the Lipschitz constant of the covariance of $Y_j$ is uniformly (in $j$) bounded on $[0,1]$, and since $Y_j(0)=0$, standard results (see, {\it e.g.}, \cite[Theorem 4.1.2 with $\alpha=1$]{AT}) yield  that $\sup_{x\in [0,1]} Y_j(x)$ has a  uniform Gaussian tail. We also know exactly the Gaussian tail of $ \Re G_0(1/2+2^{-n-1}+ij)$. Putting all together, we deduce that
$$
\Prob (\Re G_0(s)\geq \log(1/(\sigma-1/2)) +\lambda \;\;\textrm{for some}\;\; s\in \overline{Q}_{n,j})\leq C'e^{-c\lambda},
$$
with $c,C'$ two finite constants. By choosing $\lambda =\lambda_{j,n}: =(2/c)\log (|j|+1)+(\log n)^2$, the Borel Cantelli lemma implies that almost surely $|\Re G_0(s)|\leq \log(1/(\sigma-1/2)) +\lambda_{j,n}$ in all rectangles $ \overline{Q}_{n,j}$, apart from finitely many which induce a finite random bound $C$. The Lemma follows, with $r = 2/c$.
\end{proof}

Let $H^p(\sigma>a)$ denote  the space of analytic functions on the half-plane $\{\sigma>a\}$ such that $\sup_{\sigma>a}\int_{-\infty}^\infty |f(\sigma+it)|^p dt <\infty$. 
Let us define the auxiliary random function  \begin{equation}\label{eq:kasvu}
\widetilde \zeta_\rand(s):=\exp (-\sqrt{s})\exp (G_0(s)).
  \end{equation}
The above lemma implies  that 
  \begin{equation*}
\widetilde \zeta_\rand(s)\in H^p(\sigma>a)\cap H^\infty (\sigma>a)\; \textrm{ for all}\;\; p >0,\;a>1/2,  
  \end{equation*}
together with
    \begin{equation}\label{eq:kasvu2}
\log |\widetilde \zeta_\rand(s)|\leq (1+\varepsilon)\log (1/(\sigma-1/2))\;\; \textrm{on}\;\; \{\sigma>1/2, |t|\leq 1\}.  
  \end{equation}
  We note that since $|\exp (E(s))|$ in \eqref{Es} is almost surely bounded from below and above on $(\sigma,t)\in (1/2,2)\times (-2,2)$, as well as $|\exp (-\sqrt{s})|$, the random functions  $\widetilde \zeta_\rand$ and   $\zeta_\rand$ yield the same integral means spectra.

We will make use of a $H^p$-norm analogue of  Hadamard's three lines theorem: 
\begin{lemma}\label{le:Zygmund} Let $S$ be an infinite strip in the complex plane with boundary lines $L_1, L_2$. Let $p>0$ and assume that $f$ is a holomorphic function on $S$ with $\sup_L\int_L|f(z)|^p|dz|<\infty$, where the supremum is over all lines $L\subset \overline{S}$  $($which naturally are parallel to  $L_1$ and $L_2$$)$. Then for any line $L\in S$ it holds that
$$
\int_L|f(z)|^p|dz|\;\leq \; \Big(\int_{L_1}|f(z)|^p|dz|\Big)^{1-\theta}\Big(\int_{L_2}|f(z)|^p|dz|\Big)^{\theta},
$$
where $\theta:={\rm dist\,}(L,L_1)/{\rm dist\,}(L_1,L_2)$.
\end{lemma}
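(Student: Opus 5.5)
This is a version of Hadamard's three lines theorem for $H^p$-type integrals, and we prove it by reduction to the classical three lines theorem. After a rotation and translation we may assume $S=\{0<\Re z<1\}$, $L_1=\{\Re z=0\}$, $L_2=\{\Re z=1\}$, so $L=\{\Re z=\theta\}$. Set
$$M(x):=\int_{\R}|f(x+iy)|^p\,dy,\qquad 0\le x\le 1 .$$
The claim is that $\log M$ is convex, or more precisely that $M(\theta)\le M(0)^{1-\theta}M(1)^{\theta}$. Boundary values on $L_1,L_2$ are understood in the usual $H^p$ sense, as nontangential or $L^p$ limits, which exist under the uniform bound $\sup_L\int_L|f|^p<\infty$. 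Alternatively, we first prove the inequality on slightly smaller strips $\{\e\le\Re z\le 1-\e\}$, where $f$ is holomorphic up to the boundary, and then let $\e\to0$ using the lower semicontinuity of $M$ together with the boundary convergence.

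The first step is a duality and linearization argument in the spirit of Riesz--Thorin. When $p\ge 1$, one could write $M(\theta)^{1/p}=\sup_g\big|\int f(\theta+iy)g(y)\,dy\big|$ and interpolate the test functions $g$ analytically. A cleaner route that works for all $p>0$ goes through subharmonicity. The function $|f|^p$ is subharmonic, since $\log|f|$ is subharmonic and $t\mapsto e^{pt}$ is convex and increasing. We then take the least harmonic majorant $u$ of $|f|^p$ on the strip. Under the uniform line-integral bound this majorant exists, and it is the Poisson integral of the boundary values $|f|^p$ on $L_1\cup L_2$; justifying this is a Phragm\'en--Lindel\"of step in the strip. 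Since $\int_{\R}u(x+iy)\,dy$ is affine in $x$ (it is the Poisson kernel averaged in $y$), this approach yields only the weaker bound $M(\theta)\le (1-\theta)M(0)+\theta M(1)$, that is, convexity of $M$ rather than log-convexity.

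To upgrade convexity to log-convexity, we use the standard multiplier trick. For real $\lambda$, apply the convexity statement to $f_\lambda(z):=e^{\lambda z/p}f(z)$. This function satisfies the same hypotheses, because $|e^{\lambda z/p}|^p=e^{\lambda x}$ is bounded on the strip, and its integral mean is $M_\lambda(x)=e^{\lambda x}M(x)$. We obtain
$$e^{\lambda\theta}M(\theta)\le(1-\theta)M(0)+\theta e^{\lambda}M(1)\qquad\text{for every }\lambda\in\R .$$
Choosing $e^{\lambda}=M(0)/M(1)$ (and treating the case where one of $M(0),M(1)$ vanishes by a limit) makes the right-hand side equal to $M(0)$, which gives $M(\theta)\le M(0)^{1-\theta}M(1)^{\theta}$.

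The main obstacle is the justification in the first step: showing that $|f|^p$ is dominated by the Poisson integral of its boundary values on the unbounded strip. This requires a Phragm\'en--Lindel\"of argument, because harmonic functions on a strip can grow like $e^{\pi|y|}$. The hypothesis $\sup_L\int_L|f|^p<\infty$, combined with the sub-mean value property over discs of fixed radius, gives $|f(z)|^p\le C/\mathrm{dist}(z,\partial S)$, so $|f|^p$ is uniformly bounded on smaller strips. A bounded subharmonic function is dominated by the Poisson integral of its boundary data, so the difference between $|f|^p$ and the majorant is controlled; working on shrunken strips and passing to the limit then completes the argument.
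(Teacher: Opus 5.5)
Your proof is correct, but it takes a different route from the paper, which gives no argument at all and simply cites the $H^p$ theory of the strip (Bakan--Kaijser, Theorem 2.3, or Zygmund, Chapter XII).

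What you have written is the strip analogue of the classical proof of Hardy's convexity theorem:
\begin{itemize}
\item subharmonicity of $|f|^p$;
\item domination by the Poisson integral of its boundary values, justified on shrunken strips via $|f(z)|^p\le C/\mathrm{dist}(z,\partial S)$ and Phragm\'en--Lindel\"of for bounded subharmonic functions;
\item the fact that $\int_{\R}P[\phi](x+iy)\,dy$ is affine in $x$;
\item the multiplier $e^{\lambda z/p}$, which upgrades convexity of $M$ to log-convexity.
\end{itemize}
This makes the lemma essentially self-contained, except for one input from boundary-value theory.

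At that point you should be more precise. Lower semicontinuity (Fatou) only gives $M(0)\le\liminf_{\varepsilon\to0}M(\varepsilon)$, which is the wrong direction for the right-hand side. What the limit $\varepsilon\to0$ actually requires is $M(\varepsilon)\to M(0)$ and $M(1-\varepsilon)\to M(1)$. That is the $L^p$ convergence of $f(\varepsilon+i\,\cdot)$ to its boundary function. It is a genuine theorem of $H^p$ theory on the strip (not trivial for $p<1$), and it is essentially what the cited reference supplies.

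In the paper's application this issue disappears. There $f=g_{2^n}\widetilde\zeta_{\rm rand}$ is holomorphic with uniformly bounded line integrals on a strip strictly containing $\overline S$. This holds because $\widetilde\zeta_{\rm rand}\in H^p(\sigma>a)$ for every $a>1/2$, and $g_M$ is entire with $|g_M|\le 1$ on $|\Re z|\le\pi/2$. So $|f|^p$ is bounded and continuous on $\overline S$, and your Phragm\'en--Lindel\"of step applies on $S$ itself. No boundary-value theory or limiting argument is then needed.
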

\begin{proof}
See \cite[Theorem 2.3]{BakanKaijser07} or \cite[Chapter XII, 1.3]{Zygmund}.
\end{proof} 
To apply Lemma \ref{le:Zygmund} to finite segments instead of lines, we use the following auxiliary function: 
\begin{lemma}[Localization]\label{le:auxf}   Denote $z=x+iy\in\C$.
For each $M\geq 2$ there is a  function $g_M$ that is analytic in the strip $S=\{ -1/2<x< 1/2\}$, and satisfies
\begin{eqnarray*}
|g_M(z)|&\leq& 1\;\;\textrm{in}\;\;S,\\
|g_M(z)|&\geq& e ^{-2}\textrm\;\;{for}\;\;  \{|y|\leq M\}\cap S,\quad \textrm{and}
\\
 |g_M(z)|&\leq&\exp(e ^{-M}/2)\;\;\textrm{for}\;\; \{|y|\geq  2M\}\cap S.
\end{eqnarray*}
\end{lemma}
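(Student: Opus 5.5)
We will construct $g_M$ explicitly as the exponential of a suitable analytic function on the strip, namely $g_M(z)=\exp(-h_M(z))$ with $\Re h_M\geq 0$ on $S$. The natural candidate is a combination of $\cosh$-type functions, which are bounded in real part on a strip of width less than $\pi$. Concretely, for $z=x+iy\in S$ we have $|x|<1/2$, so $\cos(x)\geq \cos(1/2)>0$. Consider $w(z):=-iz$, which maps $S$ to the horizontal strip $\{|\Im w|<1/2\}$, with $\Re w=y$. On that horizontal strip, $\Re\cosh(w)=\cosh(y)\cos(x)$, which is positive and comparable to $\cosh y$ uniformly in $S$.

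The first attempt is $h(z)=\cosh(\lambda(-iz))$ with a small $\lambda$. Then $\Re h=\cosh(\lambda y)\cos(\lambda x)>0$, so $|g|\leq 1$, and for $|y|\leq M$ we have $|g|\geq\exp(-\cosh(\lambda M))$. However, $|g|$ decays for large $|y|$ instead of staying close to $1$. The stated third property, $|g_M|\leq \exp(e^{-M}/2)$ for $|y|\geq 2M$, only requires an upper bound there. But that bound exceeds $1$ and is therefore already implied by $|g_M|\leq 1$. So the third property is automatic, and only the first two constraints matter.

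It then suffices to choose $\lambda$ with $\cosh(\lambda M)\leq 2$, for instance $\lambda=1/M$. Since $\cosh 1\approx1.54<2$, this gives $|g_M|\geq e^{-2}$ on $\{|y|\leq M\}\cap S$. Also $\cos(x/M)>0$ for $|x|<1/2$, which gives $|g_M|\leq 1$ on all of $S$.

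The plan is therefore:
\begin{enumerate}
\item Set $g_M(z):=\exp\bigl(-\cosh(-iz/M)\bigr)=\exp\bigl(-\cos(z/M)\bigr)$.
\item Verify analyticity, which is immediate since $g_M$ is entire.
\item Compute $\Re\cos(z/M)=\cos(x/M)\cosh(y/M)\in(0,\cosh(y/M)]$.
\item Deduce the three inequalities from this formula.
\end{enumerate}

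The expected obstacle is the intended use rather than the stated lemma. In applying Lemma~\ref{le:Zygmund}, one wants $|g_M|$ to be \emph{small} (decaying) for $|y|\geq 2M$, so that the tails of the integral means are suppressed. The function above has exactly this feature: $|g_M|\leq\exp(-\cos(1/2M)\cosh(2))$ there, and it decays doubly exponentially as $|y|\to\infty$. If the paper intends the stronger decay version of the third inequality, the same construction delivers it, possibly after a rescaling in $\lambda$. I would double-check which direction the third inequality must go when it is later combined with the three-lines estimate, and adjust $\lambda$ (e.g.\ $\lambda=c/M$ with $c$ tuned) to balance the lower bound $e^{-2}$ on $|y|\leq M$ against the required decay for $|y|\geq 2M$.
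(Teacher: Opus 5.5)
For the lemma exactly as printed, your argument is correct. The function $g_M(z)=\exp(-\cos(z/M))$ satisfies $|g_M|=\exp(-\cos(x/M)\cosh(y/M))\le 1$ on $S$, and $|g_M|\ge e^{-\cosh 1}>e^{-2}$ on $\{|y|\le M\}$. As you noticed, the printed third bound $\exp(e^{-M}/2)>1$ is then vacuous.

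That exponent is a misprint for $\exp(-e^{M}/2)$, and this is how the lemma is used afterwards. With $M=2^n$ the paper needs the factor $\exp(-e^{2^n}/2)$ on $S_n\cap\{t<-1/2\;\textrm{or}\;t>3/2\}$. This is what makes the contributions from $L_n\setminus I'_n$ and $L_{n+1}\setminus I'_{n+1}$ negligible in the three-lines step. Your function does not give this. At $x=0$, $y=2M$ you get $|g_M|=\exp(-\cosh 2)\approx e^{-3.76}$ for every $M$, so on $\{|y|\ge 2M\}$ you only gain a fixed constant factor, not a doubly exponentially small one.

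Your suggested repair, retuning $\lambda$, cannot work either. Take $g=\exp(-\cos(\lambda z))$. The lower bound on $\{|y|\le M\}$, evaluated at $x=0$, forces $\cosh(\lambda M)\le 2$. Hence $\cosh(2\lambda M)=2\cosh^2(\lambda M)-1\le 7$, and $|g(2Mi)|\ge e^{-7}$ whatever $\lambda$ is. With amplitude fixed at $1$, the exponent at $|y|=2M$ can never be much larger than at $|y|=M$.

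The missing idea is to separate amplitude from frequency. Keep unit frequency, so that $\cosh(2M)/\cosh M\approx e^{M}$, and put the smallness into the amplitude. This is the paper's choice, in the same family as yours:
\[ g_M(z)=\exp(-e^{-M-iz}-e^{-M+iz})=\exp(-2e^{-M}\cos z),\qquad |g_M(z)|=\exp(-2e^{-M}\cos x\cosh y). \]
Since $1/2<\cos(1/2)<\cos x\le 1$ on $S$, this gives all three bounds:
\begin{itemize}
\item $|g_M|\le 1$ on $S$;
\item $|g_M|\ge\exp(-2e^{-M}\cosh M)=\exp(-1-e^{-2M})\ge e^{-2}$ for $|y|\le M$;
\item $|g_M|\le\exp(-2\cos(1/2)\,e^{-M}\cosh(2M))\le\exp(-\cos(1/2)\,e^{M})\le\exp(-e^{M}/2)$ for $|y|\ge 2M$.
\end{itemize}
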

\begin{proof}
Simply choose $$g_M(z)=\exp\big(-e^{-M-iz}-e^{-M+iz}\big),$$ for which 
$|g_M(z)|=\exp\big(-e^{-M}2\cos x\cosh y\big)$.
\end{proof}

We are now ready to complete the needed estimates for the integral means. Consider for any $a\geq 0$ the vertical line segment
$I_a=\{\sigma=1/2+2^{-a}\;, \; 0<t<1\}$,  its double 
$I'_a=\{\sigma=1/2+2^{-a}\;, \; -1/2<t<3/2\}$, and its half  $I''_a=\{\sigma=1/2+2^{-a}\;, \; 1/4<t<3/4\}$. Denote also the line  $L_a=\{\sigma=1/2+2^{-a}\}$. 

Exactly as in Section   \ref{subse:unit disc}, Lemma \ref{le:gmc} and Corollary \ref{cor:helppo}, we may use the positive and negative moments of the Gaussian chaos measure \eqref{GMC} \eqref{Css} corresponding to the  boundary values of the Gaussian field $\Re G_0$, and analogous to \eqref{eq:moments},  to show that  there exists a random index $n_0$, finite almost surely, so that for any $\beta\in(0,2)$ and $\varepsilon >0$ it holds that
\begin{equation}\label{eq:bounds}
2^{(\beta^2/4 -\varepsilon)n} \leq \int_{I''_n}|\widetilde\zeta_\rand(s)|^\beta|ds| \leq \int_{I'_n}|\widetilde\zeta_\rand(s)|^\beta|ds| \leq 2^{(\beta^2/4 +\varepsilon)n}\;\;\textrm{for}\;\; n\geq n_0.
\end{equation}

We now consider any interval $I_a$, with $a=n+\vartheta, \vartheta \in [0,1)$, and apply the convexity inequality of Lemma \ref{le:Zygmund}  to line $L_a$  in the strip $S_n$ that lies between the lines $L_n$ and $L_{n+1}$, with $p=\b$ and $\theta=2^{1-\vartheta}-1$. To this end, we use the function 
$
f(z)=g_{2^{n}}(z)\widetilde \zeta_\rand(s),
$ 
with $z=2^{n+1}(s-(1/2+3\cdot 2^{-n-{2}}+i/2))$. By the properties of $g_{2^{n}}$ we have that $|f(z)|\geq e^{-2}|\widetilde \zeta_\rand(s)|$ on $S_n\cap\{ 0<t<1\}$ together with $|f(z)|\leq \exp(-e^{2^n}/2)|\widetilde\zeta_\rand(s)|$ on $S_n\cap \{t<-1/2\;\textrm{or}\;\,t>3/2\}.$ Using our upper bounds \eqref{eq:bounds} for $\widetilde \zeta_\rand(s)$ it  follows that,
\begin{eqnarray*}
 e^{-{2}\beta}\int_{I_a}|\widetilde\zeta_\rand(s)|^\beta|ds|
 &\leq &\big( \int_{I'_n}|\widetilde\zeta_\rand(s)|^\beta|ds| + o(1)\big)^{1-\theta}
\big( \int_{I'_{n+1}}|\widetilde\zeta_\rand(s)|^\beta|ds| + o(1)\big)^{\theta}\\ & \leq&  2^{(\beta^2/4 +\varepsilon)(n+\theta)}(1+o(1)),
\end{eqnarray*}
where the $o(1)$ terms obviously come from integrals over complementary intervals $L_n\setminus I'_n$ and $L_{n+1}\setminus I'_{n+1}$. The inequality to the other direction is proven analogously by applying Lemma \ref{le:Zygmund} to estimate the obvious analogue of $f$ over the line {$L_n$ in the strip between $L_a$ and $L_{n-1}$. This time we obtain, for $\theta=(2^{\vartheta}-1)/(2^{\vartheta+1}-1)$,
\begin{eqnarray*}
 e^{-2\beta}\int_{I''_n}|\widetilde\zeta_\rand(s)|^\beta|ds|
 &\leq &\big( \int_{I_a}|\widetilde\zeta_\rand(s)|^\beta|ds| + o(1)\big)^{1-\theta}\\
 &\times&\big( \int_{I_{n-1}}|\widetilde\zeta_\rand(s)|^\beta|ds| + o(1)\big)^{\theta}.
\end{eqnarray*}
This clearly yields a suitable lower bound for $\int_{I_a}|\widetilde\zeta_\rand(s)|^\beta|ds|$
 as soon as we substitute in the above inequalities the lower bound for  $ \int_{I''_n}|\widetilde\zeta_\rand(s)|^\beta|ds|$ and the upper bound for $\int_{I_{n-1}}|\widetilde\zeta_\rand(s)|^\beta|ds|$ obtained from \eqref{eq:bounds}. This completes the GMC-based approach to Theorem \ref{th:zeta} for $\beta\in(0,2)$.

For the $\b \geq 2$ case of Theorem \ref{th:zeta}, we use the same convexity argument as at the end of the proof of Theorem \ref{th:prob} in Section \ref{subse:unit disc}. Owing to the upper bound \eqref{eq:kasvu2}, we indeed have for any $\beta>0$, that $\limsup \phi_\s(\beta)\leq \beta -1$, here for the function $\phi_\s(\b):=\log \int_{I_a}|\widetilde\zeta_\rand(s)|^\beta|ds|/\log((\s-1/2)^{-1})$. Its limit as $\s\to (1/2)^+$  yields the convex integral means spectrum $f(\b)=\inf\{\beta^2/4, \b-1\}$ sought for.


\section{A brief history of multifractal analysis for harmonic measure}\label{App}
{\sl Caveat: In this Appendix, we return to notations for the integral means spectrum that are classical in harmonic measure studies \cite{MR1629379,MR2450237}, but differ from those of the previous sections.} 

Let $h$ be a conformal mapping from the unit disc, $h: \D\to \Omega$, where $\Omega$ is a bounded planar domain.  Let $t$ be a complex number, and consider the integral means of the growth of the modulus of the $t$th power of the derivative, $|h'(z)^t|$. The \emph{integral means spectrum} associated with $h$ is then defined as \cite{MR1629379,Bin97}
\be \Eq(-1.0A)
\b_h(t) := \underset{r\to 1^{-}}{\limsup} 
\frac{\log \int_{r\partial \D} |h'(z)^t| |dz|}{\log\left(\frac{1}{1-r}\right)}.
\ee
When the limit exists, one has the asymptotic behavior,
\be \Eq(-1.1A)
\int_{r\partial \D} |h'(z)^t| |dz|\asymp \left(1-r\right)^{-\b_h(t)}, \quad r\to 1^{-},
\ee
in the sense of the equivalence of the logarithms.

When the conformal mapping $h$ is \emph{random}, one is usually lead to first define an \emph{average} integral means spectrum, where one takes the expectation of the l.h.s. of \eqv(-1.1A), and the question naturally arises of the comparison of the average spectrum and of the almost sure one. For instance, let us recall the \emph{average} integral means 
spectrum of the  Schramm-Loewner evolution \cite{OS}, given, for  the bounded version of whole-plane SLE$_\kappa$, by  the convex function for real $t$ \cite{Duplantier00,BS,MR3638311}
\bea \Eq(-1.2A)
\b(t,\kappa)-t+1&=& 1+2\tau -2\sqrt{b\tau}, \quad \tau=d-t,
\\ \label{bkA}
d=d(\kappa) &=&\frac{(4+\kappa)^2}{8\kappa},\quad \quad \,\,\,\quad t\in [t_1,t_2],
\eea
 with $t_1=-1-\frac{3}{8}\kappa, t_2=\frac{3}{4}d(\kappa)$. Outside of that interval, an average spectrum associated with the SLE \emph{tip}  exists  for $t\leq t_1$, 
whereas the  average spectrum becomes \emph{linear}  for $t\geq t_2$ \cite{Duplantier00,PhysRevLett.88.055506,BS,MR3638311}. The  \emph{almost sure tip spectrum} was obtained in \cite{zbMATH06126650}, as well as an \emph{a.s. boundary} spectrum \cite{zbMATH06571699}, extended in \cite{zbMATH07250114}. The \emph{a.s. SLE bulk spectrum}, finally obtained in \cite{gwynne2018} via the so-called \emph{imaginary geometry}, is  identical to the average one   \eqv(-1.2A), except that its transition to a linear spectrum happens before $t_2$, exactly at the point where the intersection of the tangent with the vertical axis leaves the $[0,-1]$ interval, {\it i.e.}, Makarov's criterion for a $\b$-spectrum to be that of an actual conformal map \cite{MR1629379}.

 There exist equivalent formulations in terms of the \emph{multifractal spectrum} of measures, initiated by B.B. Mandelbrot in 1974 \cite{zbMATH03454953}, and generalized in 1985 by Frisch and Parisi \cite{FP} and Halsey  {\it et al.}
  \cite{1986PhRvA..33.1141H,PhysRevA.34.1601}, when trying to describe 
 scaling laws of measures on various physical fractals (see also \cite{zbMATH03856061}). If $\o$ is a (locally finite) Borel measure, its $f(\a)$ spectrum is defined by
 \be \Eq(-1.3)
 f(\a)=\dim (E_\a),
 \ee
where $\dim$ is here thought of as being the Hausdorff  (or sometimes Minkowski) dimension, and where $E_\a$ is the set of points with a specific scaling power $\a$,
\be
E_\a=\{z \,| \,\o\big(\D (z,r)\big)\overset{r\to 0}{\asymp} r^\a\},
\ee
 where $\mathbb D(z,r)$ is the disc of radius $r$ centered at $z$. 

When $\o$ is the \emph{harmonic measure} on a bounded, simply connected domain, Beurling's theorem states that $$\o\big({\mathbb D}(z,r)\big) \leq c\, r^{1/2},$$
where the constant $c$ depends only on the conformal radius at point $z$, which implies that $f(\a)$ is defined only for $\a\geq 1/2$.  
In the harmonic measure case, the integral means spectrum $\b(t)$ \eqv(-1.1A) and the multifractal spectrum $f(\alpha)$ \eqv(-1.3) are, for (in some sense) regular fractals, related by the 
following Legendre-type transform (for general domains, there are only one-sided inequalities, see Makarov's survey \cite{MR1629379})
\bea \Eq(-1.5)
\frac{1}{\a}f(\a)&=&\underset{t}{\inf}\left\{\b(t)-t+1+\frac{1}{\a} t \right\},\\ \Eq(-1.6)
\b(t)-t+1&=&\underset{\a}{\sup}\left\{\frac{1}{\a}\big(f(\a)-t\big)\right\}. 
\eea
Probably the most famous result in the field is due to Makarov \cite{Makadist}, and concerns the value $\a =1$, which corresponds to $t=0$: For any simply connected domain, the harmonic measure is supported on a set of exactly Hausdorff dimension one, which translates into the universal result 
\be \label{Mak1}
f(1)=1.
\ee
This in turn implies that the $\b$-spectrum is quadratic near the origin $t=0$.
 In the SLE case, when applying \eqv(-1.5) to \eqv(-1.2A), one readily finds the average multifractal spectrum of its harmonic measure \cite{Duplantier00,BS,MR3638311}
\be\Eq(-1.4)
f(\alpha,\kappa)=\alpha-d(\kappa)\frac{(\a-1)^2}{2\a-1}
 \quad \a \geq \frac{1}{2}.
\ee
This \emph{expected} spectrum is non-negative only in the interval $[\a_{-},\a_{+}]$, where $\a_{\pm}=(2d-1\pm\sqrt{1+4d})/2(d-2)$, whereas the \emph{almost sure} spectrum is identical to \eqv(-1.4) in this interval and vanishes outside of it \cite{gwynne2018}. As expected, one finds that $f(\a=1,\kappa)=1$ for all $\kappa>0$, in agreement with Makarov's theorem, but notice that it is also the case for the derivative, namely that $\partial_\a f(\a=1,\kappa)=1$ for all $\kappa>0$. 

   The so-called universal spectra are obtained when one considers the supremum of all conformal maps over bounded (vs. unbounded), simply connected planar domains. In particular, the so-called \emph{pressure spectrum} 
   $\mathrm B(t)$ is defined as the supremum over univalent holomorphic functions $h$ from $\mathbb D$ to \emph{bounded} domains $\Omega$ as 
 \be\nonumber
\mathrm{B}(t)=\underset{\Omega}{\sup} \,\b(t),\quad \Omega\, \,\textrm{bounded},
   \ee
whereas a universal spectrum $\mathrm B_{\bullet}(t)$ is similarly defined  for \emph{unbounded} domains.

A number of  exact results and outstanding conjectures are known for the universal integral means spectra.  
(See, {\it e.g.}, the detailed survey by Hedenmalm and Sola \cite{MR2419488}, and the treatise by Garnett and Marshall \cite{MR2450237}.) For real parameter $t$, a well-known result from Makarov \cite{MR1629379} is the following relation between the bounded vs. unbounded spectra,
\be \label{MakbuA}
\mathrm B_{\bullet}(t)=\max\{\mathrm B(t),3t-1\},\quad t\in \mathbb R.
\ee
while it was already known that $\mathrm B_{\bullet}(t)=3t-1$ for $t\geq 2/5$ \cite{FengMcG}. For large values of $t$, the universal integral means spectrum $\mathrm B$ is also linear and a classical result  \cite{MR1217706} is
\be \label{t-1bis}
\mathrm B(t)=t-1,\quad t\geq 2,
\ee
whereas Jones and Makarov \cite{MR1356778},
    proved that near $t=2$,  one has 
\be
\mathrm B(2-t)=1-t+\mathcal O(t^2), \quad t\to 0,
\ee
thus insuring the existence of the derivative of $\mathrm B(t)$ at the ``phase transition'' point $t=2$.  

Near the origin, Makarov's theorem implies that there exists a constant $c$ such that $B(t)\sim c t^2$ for $t\to 0$, and it is known \cite{MR2299502} that $c \leq(\sqrt{24}-{3})/{5}=0.3798...$, a result  improving on \cite{MR2130416}.  
One also has the lower bound, $B(t) > t^2/5$ for $0<t\leq 2/5$ \cite{MR2237215}. 

In the $f(\a)$ formalism, the  universal multifractal function is defined as the supremum $\mathrm F(\a)=\underset{\Omega}{\sup}\, f(\alpha)$, where the bounded and unbounded cases are this time equivalent. It is naturally related to the universal integral mean spectrum above by the Legendre-type transform \cite{MR1629379},
\bea \Eq(-1.5bis)
\frac{1}{\a}\mathrm F(\a)&=&\underset{0\leq t\leq 2}{\inf}\left\{\mathrm B(t)-t+1+\frac{1}{\a} t \right\},\quad  \a \geq 1,\\ \Eq(-1.6bis)
\mathrm B(t)-t+1&=&\underset{\a\geq 1}{\sup}\left\{\frac{1}{\a}\big(\mathrm F(\a)-t\big)\right\}, \quad \quad \, 0\leq t\leq 2. 
\eea

\subsection{Brennan-Carleson-Jones-Kraetzer Conjectures} 
For large negative values of $t$, Carleson and Makarov \cite{CaMa94} proved that there exists a constant $t_0<0$ such that $\mathrm B_{\bullet}(t)=\mathrm B(t)=-t-1$ for $-\infty <t\leq t_0$. 
Determining the optimal value of $t_0$ is an important question in the theory of conformal mappings. While it is only known that $t_0\leq -2$, the value $t_0=-2$ would  yield $\mathrm B_{\bullet}(-2)=\mathrm B(-2)=1$, as well as  $\mathrm F\left(\a=\frac{1}2{}\right)=0$, as suggested by Beurling's inequality, which corresponds to the well-known Brennan conjecture \cite{MR509942}. 

Another conjecture, this time due to Carleson and Jones \cite{MR1162188} and supported by some numerically evidence, concerns the particular value $t=1$ and  is
\be\label{CJ}
\mathrm B(1)=\frac{1}{4}.
\ee
The best results known to date,  including computer assistance, are  from Refs. \cite{MR1162188} and \cite{MR2130416}  respectively, $0.21\leq B(1)\leq 0.46$, 
improving an earlier result  obtained without that assistance, $B(1)\leq 0.4884$ \cite{MR1469793}.   

Finally, the above conjectures are encompassed by the well-known \emph{Kraetzer conjecture} \cite{MR1427159} that, 
\be\label{KA}
\mathrm B_{\mathcal K}(t)=\frac{t^2}{4}, \quad t\in [-2,2],
\ee
which is supplemented by result \eqv(t-1) for $t\geq 2$, conjecturally extended as $|t|-1$ for $t\leq -2$.

For complex values of $t$, this conjecture was extended by I. Binder \cite{Bin09}, under the simple form,
\be\label{KcA}
{\mathrm B}_{{\mathcal K}}(t)=
\begin{cases}\frac{1}{4}|t|^2,  \,\,\,t\in \mathbb C, \,\,|t| \leq 2,&\\ 
|t|-1, \quad\quad  \quad 2\leq |t|.&
\end{cases}
\ee

In the real case \eqv(K), the Legendre transform \eqv(-1.5bis) and \eqv(-1.6bis), once  extended to the range of parameters $t\in [-2,2]$, yields the corresponding universal multifractal spectrum
\bea
\label{Falpha}
\mathrm F_{\mathcal K}(\a)&=&2-\frac{1}{\a}, \quad  \,\,\a \in \left[\frac{1}{2}, \infty\right),\\ \nonumber
\a&=&\frac{2}{2-t}, \quad \,\, t\in [-2,2].
\eea
Of course, on has $\mathrm F_{\mathcal K}(1)=1$, while the Carleson-Jones conjecture \eqv(CJ) translates into 
\be \label{FK2}
\mathrm F_{\mathcal K}(\a=2)=3/2. 
\ee
\subsection{Comparison to SLE spectra}
Notice that in the SLE integral means spectrum \eqv(-1.2A),  the quantity $d=d(\kappa)$ \eqv(bkA), being invariant under the duality $\kappa\to 16/\kappa$, is bounded below by $d(4)=2$, yielding  $3/2\leq t_2$. Thus, the parameter value $t=1$ is always in the range of validity of  \eqv(-1.2A)  for SLE$_\kappa$. This yields $\b(1,\kappa)=2d-1-2\sqrt{d(d-1)}$, which is decreasing function of $d$, thus $\b(1,\kappa)\leq \b(1,\kappa=4)=3-2\sqrt{2}<1/4$, in agreement with the Carleson-Jones conjecture \eqv(CJ). In the $f(\a)$ formalism, one also has agreement with \eqv(FK2), since
 $$\underset{\a\geq 1/2}{\sup}f(\a,\kappa)\leq 3/2=\mathrm F_{\mathcal K}(2),$$ 
 the upper bound being attained for $\kappa=4$, in the limit $\a\to +\infty$. 

Near the origin, one has $\b(t,\kappa)\sim t^2/4b, t\to 0$, in agreement with \eqv(KA), since $b\geq 2$. As seen above, Makarov's theorem \eqv(Mak1) is obeyed by all the SLE multifractal spectra \eqv(-1.4), 
but their $\a$-derivatives at $\a=1$ are also all equal to 1, namely $f'(1,\kappa)=1$ for any $\k >0$. Since no $f(\a)$ spectrum can cross the trace of the universal spectrum at the point $\mathrm F(\a=1)=1$, the latter, as well as any (differentiable) multifractal $f(\a)$ spectrum,  must be cotangent at $\a=1$ to the SLE spectra, with a universal slope $1$, and this, independently of any conjecture on the precise form the universal spectrum. Interestingly, this is  the case for the Kraetzer 
spectrum \eqv(Falpha), for which one indeed has $\mathrm F'_{\mathcal K}(1)=1$.


\def\cprime{$'$}


\end{document}